\newtheorem{theorem}{Theorem}[section]
\theoremstyle{plain}
\newtheorem{corollary}[theorem]{Corollary}
\newtheorem{definition}{Definition}[section]
\newtheorem{example}[theorem]{Example}
\newtheorem{lemma}[theorem]{Lemma}
\newtheorem{proposition}[theorem]{Proposition}
\numberwithin{equation}{section}
\newcommand{\C}{\mathbb{C}}
\newcommand{\Hperp}{\mathcal{H}^{\perp}}
\newcommand{\vol}{\mathrm{vol}}
\begin{document}
\title[Eigenvalues of the Kohn Laplacian]{Eigenvalues of the Kohn Laplacian and deformations of pseudohermitian structures on CR manifolds}

\author{Amine Aribi}
\address{Institut Denis Poisson, Universit\'{e}  de Tours, Universit\'{e}  d'Orl\'{e}ans, CNRS (UMR 7013), Parc de Grandmont, 37200 Tours, France\newline ESME, Paris, France, 34 Rue de Fleurus, 75006 Paris}
\email{Amine.Aribi@lmpt.univ-tours.fr; \quad amine.aribi@esme.fr }

\author{Duong Ngoc Son}
\address{Faculty of Fundamental Sciences, PHENIKAA University, Yen Nghia, Ha Dong, Hanoi 12116, Vietnam}
\email{son.duongngoc@phenikaa-uni.edu.vn}
\thanks{2000 {\em Mathematics Subject Classification}. 32V20, 32W10, 58C40}
\thanks{\emph{Key words and phrases:} CR manifolds, Kohn Laplacian, eigenvalue}
\thanks{This project begun when the second-named author was at University of Vienna. He was supported by the Austrian Science Fund, FWF-Projekt M 2472-N35.}
\date{October 6, 2022}

\begin{abstract}
We study the eigenvalues of the Kohn Laplacian on a closed embedded strictly pseudoconvex CR manifold as functionals on the set of positive oriented pseudohermitian structures~$\mathcal{P}_+$. We show that the functionals are continuous with respect to a natural topology on~$\mathcal{P}_+$. Using an adaptation of the standard Kato--Rellich perturbation theory, we prove that the functionals are (one-sided) differentiable along 1-parameter analytic deformations. We use this differentiability to define the notion of critical pseudohermitian structures, in a generalized sense, for them. We give a necessary (also sufficient in some situations) condition for a pseudohermitian structure to be critical. Finally, we present explicit examples of critical pseudohermitian structures on both homogeneous and non-homogeneous CR manifolds.
\end{abstract}
\maketitle

\section{Introduction}\label{sec:intro}
Let $(M^{2n+1},\theta)$ be a compact strictly pseudoconvex pseudohermitian manifold, $\bar{\partial}_b$ the tangential Cauchy-Riemann operator,
and $\bar{\partial}_b^{*}$ the adjoint with respect to the volume form $d\vol_\theta:=\theta\wedge (d\theta)^n$. The Kohn Laplacian acting on functions is defined by $\Box_b = \bar{\partial}_b ^* \bar{\partial}_b$. It is well-known that  $\square_{b}$
is nonnegative and self-adjoint with noncompact resolvent on the Hilbert space $L^{2}\left(
M, d\vol_{\theta}\right) $ of the complex-valued square-integrable functions on $M$. Here, the inner product on $L^2(M,d\vol_{\theta})$ is defined by $\left\langle \phi , \psi \right\rangle :=\int_{M} \phi\, \overline{\psi}\, d\vol_\theta$.  This operator plays an important role in many problems in several complex variables and CR geometry, see, e.g. \cite{CS} and \cite{DT}. In particular, its spectrum contains rich geometric information about the underlying CR manifolds (see, e.g. \cite{BE,LSW} and the references therein).

The spectral theory for the Kohn Laplacian in the strictly pseudoconvex case is well understood. It is proved by Beals--Greiner \cite{BG} for the case $n\geq 2$ and Burns--Epstein \cite{BE} for the case $n=1$ that the spectrum of $\square_{b}$ in $\left(
0,\infty\right) $ consists of point eigenvalues of finite multiplicities (the case $n\geq 2$ is even simpler, thanks to Kohn's Hodge theory, see, e.g, \cite{CS,LSW}). Moreover, by Kohn \cite{Kohn}, $M$ is embeddable if and only if zero is an \textit{isolated} eigenvalue of $\Box_b$. Thus, if $M$ is embeddable, then $\mathrm{spec}\left( \square_{b}\right) \cap (0,+\infty) $ consists of countably many eigenvalues of finite multiplicities,
$0<\lambda_{1}\leq \lambda_{2} \leq \cdots$, with $\lambda_{j}\rightarrow\infty$ as $j\rightarrow\infty$. Moreover, 
for $j\geq 1$, the corresponding eigenfunctions are smooth. By the work of Boutet de Monvel \cite{BdM}, the embeddability holds for \textit{compact} strictly pseudoconvex CR manifolds if $n\geq 2$.

In recent years, there is much effort devoted to the study of the first positive eigenvalue $\lambda_1$ of the Kohn Laplacian. In particular, estimates for $\lambda_1$ have been studied extensively, see \cite{LSW,DLL,LS18} and the references therein. In the present paper, we consider, for each $k\geq 1$, the $k$-th eigenvalue $\lambda_k(\theta) = \lambda_k(\Box_b^{\theta})$ 
as a functional on the space of positive pseudohermitian structures $\mathcal{P}_+: = \{e^u\theta \colon u \in C^{\infty}(M) \}$ and study its behavior under deformations of the pseudohermitian structures. This study is motivated by previous work about spectral theory in Riemannian and CR geometries; see e.g. \cite{AhSa,ADS,ADS2}.

The first result of this paper establishes the continuity of eigenvalue functionals with respect to deformations of the contact forms. Precisely, fix a reference structure $\theta_0$ on $\mathcal{P}_{+}$ and consider the $C^1$-distance on $\mathcal{P}_{+}$ given by
\begin{equation}\label{e:distancedefn}
d_{0}(\theta, \theta')
=
\sup_M |u - u'| + \sup_M |\bar{\partial}_b u - \bar{\partial}_b u'|_{\theta_0},
\end{equation}
where $\theta = e^u \theta_0$ and $\theta' = e^{u'} \theta_0$. The continuity of the $\lambda_k$-functionals is stated as follows.
\begin{theorem}\label{thm:main}
Let $M^{2n+1}$ be a compact strictly pseudoconvex embeddable CR manifold. Suppose that $\theta$ and $\hat{\theta} = e^{u}\theta$ are two pseudohermitian structures on $M$. For $\delta>0$ and $\delta'>0$,
if $\sup_M |u| < \delta$ and $\sup_M |\bar{\partial}_b u|_{\theta} < \delta'$, then 
\begin{align}\label{e:son}
e^{-(n-1/2)\delta}\sqrt{\lambda_k(\theta)} - n \delta'e^{n\delta} \leq \sqrt{\lambda_k(\hat{\theta})}\leq e^{(n-1/2)\delta}\sqrt{\lambda_k(\theta)} + n \delta'e^{(n-1/2)\delta}.
\end{align}
In particular, the map $\theta \mapsto \lambda_k(\theta)$ is locally Lipschitz continuous on $(\mathcal{P}_{+}, d_0)$.
\end{theorem}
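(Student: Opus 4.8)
The theorem is driven by the min-max description of the eigenvalues together with the conformal scaling of the two $L^{2}$-quantities that enter it; the one subtle point is that $\ker\bar\partial_{b}$ is infinite-dimensional and its $L^{2}$-orthogonal complement moves with the contact form. First I would record the scaling. If $\hat\theta=e^{u}\theta$ then $d\vol_{\hat\theta}=e^{(n+1)u}d\vol_{\theta}$ and, since the Levi form scales by $e^{u}$ and hence the induced norm on $(0,1)$-forms by $e^{-u}$, one has \[ \|\phi\|_{\hat\theta}^{2}=\int_{M}|\phi|^{2}e^{(n+1)u}\,d\vol_{\theta},\qquad \|\bar\partial_{b}\phi\|_{\hat\theta}^{2}=\int_{M}|\bar\partial_{b}\phi|_{\theta}^{2}\,e^{nu}\,d\vol_{\theta} \] for all $\phi\in C^{\infty}(M)$, the operator $\bar\partial_{b}$ being the same for both structures. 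Let $\mathcal{H}=\ker\bar\partial_{b}$ be the space of CR functions — a fixed subspace of $C^{\infty}(M)$ — and let $P_{\theta}$ be the orthogonal (Szeg\H{o}) projection onto $\mathcal{H}$ in $L^{2}(d\vol_{\theta})$. Since $M$ is embeddable, $0$ is isolated in the spectrum of $\Box_{b}^{\theta}$, so Courant--Fischer applies to $\Box_{b}^{\theta}$ on $\mathcal{H}^{\perp_{\theta}}$; using $\bar\partial_{b}\phi=\bar\partial_{b}\big((I-P_{\theta})\phi\big)$ and $\|(I-P_{\theta})\phi\|_{\theta}=\mathrm{dist}_{\theta}(\phi,\mathcal{H})$ this rewrites as \[ \sqrt{\lambda_{k}(\theta)}=\min\Big\{\,\max_{0\neq\phi\in V}\tfrac{\|\bar\partial_{b}\phi\|_{\theta}}{\mathrm{dist}_{\theta}(\phi,\mathcal{H})}\ :\ V\subset\mathrm{dom}(\bar\partial_{b}),\ \dim V=k,\ V\cap\mathcal{H}=\{0\}\,\Big\}. \] The point of this form is that $\mathrm{dist}_{\hat\theta}(\phi,\mathcal{H})$ and $\mathrm{dist}_{\theta}(\phi,\mathcal{H})$ are infima over the \emph{same} set $\mathcal{H}$, so the weight bound $e^{-(n+1)\delta}\le e^{(n+1)u}\le e^{(n+1)\delta}$ gives $\mathrm{dist}_{\hat\theta}(\phi,\mathcal{H})\ge e^{-(n+1)\delta/2}\,\mathrm{dist}_{\theta}(\phi,\mathcal{H})$ with no cross-terms, and one never needs to match orthogonality relative to two different inner products.

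For the right-hand inequality of \eqref{e:son} I would fix $L^{2}(d\vol_{\theta})$-orthonormal eigenfunctions $\psi_{1},\dots,\psi_{k}$ of $\Box_{b}^{\theta}$ for $\lambda_{1}(\theta)\le\dots\le\lambda_{k}(\theta)$ (these are smooth), put $V_{0}=\mathrm{span}\{\psi_{1},\dots,\psi_{k}\}$, so that $V_{0}\perp_{\theta}\mathcal{H}$ and $\|\bar\partial_{b}\phi\|_{\theta}\le\sqrt{\lambda_{k}(\theta)}\,\|\phi\|_{\theta}$ on $V_{0}$, and use the trial space $e^{au}V_{0}$ with a weight exponent $a$ to be chosen. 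For $\phi=e^{au}\phi_{0}$ the Leibniz rule gives $\bar\partial_{b}\phi=e^{au}\big(\bar\partial_{b}\phi_{0}+a\,\phi_{0}\,\bar\partial_{b}u\big)$; feeding this into the scaling formula and applying the triangle inequality in the weighted $L^{2}$-space, and using $\sup_{M}|\bar\partial_{b}u|_{\theta}<\delta'$ together with $\sup_{M}|u|<\delta$, one bounds $\|\bar\partial_{b}\phi\|_{\hat\theta}$ by $e^{c(a,n)\delta}\big(\sqrt{\lambda_{k}(\theta)}+|a|\,\delta'\big)\|\phi_{0}\|_{\theta}$, while the previous remark bounds $\mathrm{dist}_{\hat\theta}(\phi,\mathcal{H})$ below by $e^{-c'(a,n)\delta}\|\phi_{0}\|_{\theta}$. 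Dividing, maximising over $\phi_{0}$, and inserting the result into the variational formula for $\lambda_{k}(\hat\theta)$ yields $\sqrt{\lambda_{k}(\hat\theta)}\le e^{\beta(a,n)\delta}\big(\sqrt{\lambda_{k}(\theta)}+C(a,n)\delta'\big)$: the naive choice $a=0$ already produces $\beta=n+\tfrac12$, $C=n$, and balancing the weight in the numerator (concretely $a=-\tfrac12$) lowers the exponent to the asserted $\beta=n-\tfrac12$ while keeping $C\le n$.

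The left-hand inequality follows by symmetry. Applying the right-hand inequality with $\theta$ and $\hat\theta$ interchanged, i.e. to the conformal factor $e^{-u}$, one has $\sup_{M}|u|<\delta$ unchanged and $\sup_{M}|\bar\partial_{b}u|_{\hat\theta}^{2}=\sup_{M}e^{-u}|\bar\partial_{b}u|_{\theta}^{2}<e^{\delta}\delta'^{2}$, so $e^{\delta/2}\delta'$ may replace $\delta'$; rearranging gives $\sqrt{\lambda_{k}(\hat\theta)}\ge e^{-(n-1/2)\delta}\sqrt{\lambda_{k}(\theta)}-n\delta'e^{\delta/2}$, which is at least the left-hand side of \eqref{e:son} since $e^{\delta/2}\le e^{n\delta}$. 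For the final assertion, write $\theta=e^{u''}\theta_{0}$, $\theta'=e^{u'}\theta_{0}$ in a fixed $d_{0}$-ball about $\theta_{0}$ and set $u=u'-u''$, so $\theta'=e^{u}\theta$, $\sup_{M}|u|\le d_{0}(\theta,\theta')$, and $\sup_{M}|\bar\partial_{b}u|_{\theta}\le C\sup_{M}|\bar\partial_{b}u|_{\theta_{0}}\le C\,d_{0}(\theta,\theta')$ with $C$ controlled by the radius; inserting $\delta\approx d_{0}(\theta,\theta')$ and $\delta'\approx C\,d_{0}(\theta,\theta')$ into \eqref{e:son}, using $e^{(n-1/2)\delta}-1=O(\delta)$ and the local boundedness of $\sqrt{\lambda_{k}}$, gives $\big|\sqrt{\lambda_{k}(\theta')}-\sqrt{\lambda_{k}(\theta)}\big|\le C'\,d_{0}(\theta,\theta')$, hence $\big|\lambda_{k}(\theta')-\lambda_{k}(\theta)\big|\le C''\,d_{0}(\theta,\theta')$ on the ball.

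The step I expect to be the main obstacle is the comparison of the trial space for $\hat\theta$ with the eigenspace of $\Box_{b}^{\theta}$: because $\mathcal{H}$ is infinite-dimensional one cannot transport $V_{0}$ to an $\mathcal{H}$-orthogonal subspace for $\hat\theta$, and although the distance-to-$\mathcal{H}$ reformulation handles this cleanly for the denominator, squeezing out the sharp exponent $n-\tfrac12$ (rather than the $n+\tfrac12$ that the naive trial space gives) forces the weight $e^{au}$ and a careful treatment of the residual terms coming from $e^{au}\mathcal{H}\neq\mathcal{H}$.
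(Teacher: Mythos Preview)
Your approach differs from the paper's in a way that leaves a real gap at the point you yourself flag as delicate. The paper does \emph{not} use the Rayleigh quotient $\|\bar\partial_b\phi\|^2/\mathrm{dist}(\phi,\mathcal{H})^2$. Instead it proves a ``Max--mini'' characterization
\[
\lambda_k(\theta)=\inf_{L_k}\ \sup_{0\ne f\in L_k}\frac{\|\Box_b f\|_\theta^2}{\|\bar\partial_b f\|_\theta^2},
\]
the infimum over $k$-dimensional $L_k\subset C^\infty(M,\mathbb{C})$ with $L_k\cap\ker\Box_b=\{0\}$. The payoff is twofold: the constraint on $L_k$ is \emph{independent of the contact form}, so the same trial space serves for $\theta$ and $\hat\theta$; and both numerator and denominator are derivative quantities, so one never has to compare $\|\phi\|$ or $\mathrm{dist}(\phi,\mathcal{H})$ across forms. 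Using the transformation law $e^u\hat\Box_b f=\Box_b f-n\langle\partial_b u,\bar\partial_b f\rangle$, one finds $\|\hat\Box_b f\|_{\hat\theta}^2=\int e^{(n-1)u}|\Box_b f-n\langle\partial_b u,\bar\partial_b f\rangle|^2$ and $\|\bar\partial_b f\|_{\hat\theta}^2=\int e^{nu}|\bar\partial_b f|_\theta^2$, so the weights combine to $e^{(2n-1)\delta}$ and the $n\delta'$ comes from the Cauchy--Schwarz bound on the cross term. The exponent $n-\tfrac12$ falls out with no balancing.

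In your scheme, the choice $a=0$ honestly gives only the exponent $n+\tfrac12$ (which, incidentally, yields a bound with no $\delta'$ at all---different from, not weaker than, \eqref{e:son}). Your claim that $a=-\tfrac12$ recovers $n-\tfrac12$ is not justified: the numerator weight indeed drops to $e^{(n-1)u}$, but the denominator $\mathrm{dist}_{\hat\theta}(e^{au}\phi_0,\mathcal{H})$ cannot be bounded below by $e^{-c'\delta}\|\phi_0\|_\theta$ with $c'$ small enough to compensate. The only route you indicate is $\mathrm{dist}_{\hat\theta}(\cdot,\mathcal{H})\ge e^{-(n+1)\delta/2}\mathrm{dist}_\theta(\cdot,\mathcal{H})$ followed by $\mathrm{dist}_\theta(e^{au}\phi_0,\mathcal{H})\ge\|\phi_0\|_\theta-\|(e^{au}-1)\phi_0\|_\theta$, and the second step costs an additional $|a|\delta$ at first order; the total denominator exponent is then $(n+1)/2+|a|$, so numerator plus denominator gives $(n-1)/2+(n+2)/2=n+\tfrac12$ again. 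The ``residual terms coming from $e^{au}\mathcal{H}\neq\mathcal{H}$'' are not a technicality you can clean up later---they exactly cancel the gain from the weight. Your argument therefore proves local Lipschitz continuity (with a worse constant) but not the inequality \eqref{e:son} as stated.
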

The proof is based on an analogue of the ``Max-mini principle'' for the eigenvalues of the Kohn Laplacian (see \cite{BU, ADS} for the (sub-)Laplacian counterparts). A new difficulty that arises in our situation is the fact that the kernel $\ker (\Box_b)$ is nontrivial. In fact, $\ker(\Box_b)$ consists of CR functions and has infinite dimension. We overcome this difficulty by restricting $\Box_b$ to the orthogonal complement of its kernel. We point out, however, that the orthogonality also depends on the pseudohermitian structure.

An immediate application of Theorem~\ref{thm:main} is the semi-continuity of the multiplicities of the eigenvalues. Precisely, let $m_k(\theta)$ be the multiplicity of the eigenvalue $\lambda_k(\theta)$, i.e., 
\begin{equation}
m_k(\theta)
:=
\# \{\ell \mid  \lambda_{\ell}(\theta)=\lambda_k(\theta)\}.
\end{equation}
Adapting the proof of Corollary 2.12 in \cite{urakawa2017}, we obtain from Theorem~\ref{thm:main} the following corollary.
\begin{corollary}\label{cor:semicont}
Let $(M,\theta)$ be an embeddable strictly pseudoconvex pseudohermitian manifold. Then there exists $\delta > 0$ such that whenever $\hat{\theta}\in \mathcal{P}_{+}$ with $d_{0}(\hat{\theta},\theta) <\delta$, then
\begin{equation}
m_k(\hat{\theta}) \leq m_k(\theta).
\end{equation}
\end{corollary}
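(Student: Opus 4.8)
The plan is to derive the corollary from the continuity of the eigenvalue functionals (Theorem~\ref{thm:main}) by a standard spectral-gap argument, in the spirit of the proof of Corollary~2.12 in \cite{urakawa2017}. Fix $k\geq 1$ and the reference structure $\theta$, and write $\lambda:=\lambda_k(\theta)$. Since the positive eigenvalues form a sequence $0<\lambda_1\leq\lambda_2\leq\cdots\to\infty$, there are well-defined integers $1\leq j_0\leq k\leq j_1$ characterized by $\lambda_{j_0}(\theta)=\cdots=\lambda_{j_1}(\theta)=\lambda$, together with $\lambda_{j_0-1}(\theta)<\lambda$ whenever $j_0\geq 2$ and $\lambda<\lambda_{j_1+1}(\theta)$ (the latter index is legitimate because there are infinitely many positive eigenvalues); by construction $m_k(\theta)=j_1-j_0+1$.

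Next I would fix $\varepsilon>0$ small enough that $2\varepsilon<\lambda_{j_1+1}(\theta)-\lambda$, and also $2\varepsilon<\lambda-\lambda_{j_0-1}(\theta)$ in case $j_0\geq 2$. Applying Theorem~\ref{thm:main} to each of the \emph{finitely many} functionals $\lambda_{j_0-1},\lambda_{j_0},\dots,\lambda_{j_1+1}$, and taking the smallest of the resulting thresholds, yields a $\delta=\delta(k,\theta)>0$ such that $d_0(\hat\theta,\theta)<\delta$ forces $|\lambda_j(\hat\theta)-\lambda_j(\theta)|<\varepsilon$ for every index $j$ in that finite range. It is important that only finitely many functionals enter, so that a single $\delta$ works; no uniformity over the whole spectrum is required.

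The conclusion follows by tracking the two spectral gaps. For $\hat\theta$ with $d_0(\hat\theta,\theta)<\delta$ one has, when $j_0\geq 2$,
\[
\lambda_{j_0-1}(\hat\theta)<\lambda_{j_0-1}(\theta)+\varepsilon<\lambda-\varepsilon<\lambda_{j_0}(\hat\theta),
\]
and similarly
\[
\lambda_{j_1}(\hat\theta)<\lambda+\varepsilon<\lambda_{j_1+1}(\theta)-\varepsilon<\lambda_{j_1+1}(\hat\theta).
\]
Since $j_0\leq k\leq j_1$ gives $\lambda_{j_0}(\hat\theta)\leq\lambda_k(\hat\theta)\leq\lambda_{j_1}(\hat\theta)$, these strict inequalities force every index $\ell$ with $\lambda_\ell(\hat\theta)=\lambda_k(\hat\theta)$ to satisfy $j_0\leq\ell\leq j_1$ (the lower bound being automatic when $j_0=1$). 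Hence $m_k(\hat\theta)\leq j_1-j_0+1=m_k(\theta)$, which is the asserted upper semicontinuity.

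I do not expect a serious obstacle here: once Theorem~\ref{thm:main} is available the argument is routine. The only points demanding a little care are the boundary case $j_0=1$, where the lower gap is vacuous, and the remark above that the estimate is needed for only finitely many indices so that a single $\delta$ is produced; one should also record that this $\delta$ depends on both $k$ and the base point $\theta$, which is precisely what the statement claims.
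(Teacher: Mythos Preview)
Your argument is correct and follows essentially the same spectral-gap strategy as the paper's proof: both use Theorem~\ref{thm:main} to show that the strict gaps on either side of the block $\lambda_{j_0}(\theta)=\cdots=\lambda_{j_1}(\theta)$ persist under small perturbations, forcing any index $\ell$ with $\lambda_\ell(\hat\theta)=\lambda_k(\hat\theta)$ to lie in $\{j_0,\dots,j_1\}$. The only cosmetic difference is that the paper splits explicitly into the cases $\lambda_k(\theta)=\lambda_1(\theta)$ and $\lambda_k(\theta)>\lambda_1(\theta)$ and invokes the quantitative inequality~\eqref{e:son} directly, whereas you handle both cases uniformly via the indices $j_0,j_1$ and appeal to continuity; the content is the same.
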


Subsequent results of this paper establish the one-sided differentiability of $\lambda_k$-functionals and the criticality of pseudohermitian structures with respect to 1-parameter (smooth or analytic) deformations. Namely, let $e^{u_t} \theta$ be an analytic deformation of the pseudohermtian structure. For each $k\geq 1$, the function $t \mapsto \lambda_k(\theta_t)$ is differentiable at almost every $t$, but it may fail to be differentiable at certain points. However, by an adaptation of the perturbation theory for unbounded self-adjoint operators with compact resolvent of Rellich--Alekseevsky--Kriegl--Losik--Michor, see F. Rellich \cite{FR}, D. Alekseevski \& A. Kriegl \& M. Losik \& P.W. Michor \cite{AKL}, and A. Kriegl \& P.W. Michor \cite{KM03}, we prove that the function $t \mapsto \lambda_k(\theta_t)$ admits left-sided and right-sided derivatives at $t=0$. The left and right derivatives can be expressed in terms of the
eigenvalues of the Hermitian form defined as follows: Let
\begin{equation}\label{e:Ldef}
L(\psi, \eta) = (n+1) \bar{\eta}\, \Box_{b}\psi - n\langle \bar{\partial}_b \psi , \partial_b \bar{\eta}\rangle,
\end{equation}
and
\begin{equation}\label{e:Qdef}
Q_f(\psi , \eta) = -\int_M fL(\psi , \eta)\, d\vol_\theta. 
\end{equation}
Then the restriction $L|_{E_k}$ to each eigenspace $E_k$ is Hermitian, i.e., $L(\psi , \eta) = \overline{L(\eta, \psi)}$ for $\eta, \psi \in E_k$. Moreover, if $f$ is real-valued, then $Q_f$ is also Hermitian. Therefore, $Q_f|_{E_k}$ has $m := \dim {E_k}$ real eigenvalues, counting multiplicities.
Our next result is as follows.
\begin{theorem}\label{thm:1.2}
Let $(M,\theta)$ be an embeddable strictly pseudoconvex pseudohermitian manifold and $\theta(t) = e^{u_t}\theta,\,t\in (-\epsilon,\epsilon),$ an analytic deformation, $\theta(0) = \theta$. For each $k\geq 1$, let $\lambda_k(\theta(t))$ be the $k$-th eigenvalue of $\Box_{b,t}$. Then
\begin{enumerate}[(i)]
\item The function $t \mapsto \lambda_{k}(\theta(t))$ has left and right derivatives at $t=0$.
\item The one-side derivatives $\frac{d}{dt} \lambda_{k}(\theta(t))\big|_{t=0^-}$ and $\frac{d}{dt}\lambda_{k}(\theta(t))\big|_{t=0^+}$ are eigenvalues of the Hermitian form $Q_{f}|_{E_k}$, where $f = \partial u_t/\partial t |_{t=0}$.
\item If $k=1$ or $\lambda_{k}(\theta ) > \lambda_{k-1}(\theta )$, then $\frac{d}{dt}\lambda_{k}(\theta(t))\big|_{t=0^-}$ and $\frac{d}{dt}\lambda_{k}(\theta(t))\big|_{t=0^+}$ are the greatest and the least eigenvalues of $Q_{f}|_{E_k}$, respectively.
\item If $\lambda_{k}(\theta) < \lambda_{k+1}(\theta )$ then
$\frac{d}{dt}\lambda_{k}(\theta(t))\big|_{t=0^-}$ and $\frac{d}{dt}\lambda_{k}(\theta(t))\big|_{t=0^+}$
are the smallest and the greatest eigenvalue of $Q_{f}|_{E_k}$, respectively.
\end{enumerate}
\end{theorem}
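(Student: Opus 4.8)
The plan is to transport the $t$-dependent family $\Box_{b,t}$ to a single, fixed Hilbert space, apply Rellich-type analytic perturbation theory to the \emph{isolated positive} eigenvalue $\lambda_k(\theta)$, and then identify the first-order term of the perturbation with the Hermitian form $Q_f$. Since $d\vol_{\theta_t}=e^{(n+1)u_t}\,d\vol_\theta$, multiplication by $e^{(n+1)u_t/2}$ is a unitary isomorphism $U_t\colon L^2(M,d\vol_{\theta_t})\to L^2(M,d\vol_\theta)$, and I set $A(t):=U_t\,\Box_{b,t}\,U_t^{-1}$, a self-adjoint operator on the \emph{fixed} space $L^2(M,d\vol_\theta)$ with $\Spec A(t)=\Spec\Box_{b,t}$, so that $\lambda_k(\theta_t)=\lambda_k(A(t))$ and $A(0)=\Box_b^\theta$. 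Since $\bar\partial_b$ on functions does not depend on the contact form, while the Levi metric on $(0,1)$-forms scales by $e^{-u_t}$ and $d\vol$ by $e^{(n+1)u_t}$, one checks that $A(t)$ is a second-order differential operator whose coefficients depend real-analytically on $t$ and whose domain $U_t(\domain\Box_{b,t})=\domain\Box_b^\theta$ is independent of $t$; hence $\{A(t)\}$ is a real-analytic family of self-adjoint operators of type (A).

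By embeddability (Kohn), $\lambda_k(\theta)>0$ is an isolated point of $\Spec A(0)$ of finite multiplicity $m:=\dim E_k$; the infinite-dimensional kernel of $\Box_b$ causes no difficulty because it is separated from $\lambda_k(\theta)$ by a positive gap --- this is the ``simple adaptation'' of the perturbation theory of Rellich and Alekseevsky--Kriegl--Losik--Michor \cite{FR,AKL,KM03}, which needs only an isolated eigenvalue of finite multiplicity. That theory then furnishes $\epsilon>0$, real-analytic functions $\nu_1,\dots,\nu_m\colon(-\epsilon,\epsilon)\to\R$, and a real-analytic family of orthonormal systems $\phi_1(t),\dots,\phi_m(t)$ in $L^2(M,d\vol_\theta)$, with $A(t)\phi_i(t)=\nu_i(t)\phi_i(t)$, $\{\phi_i(0)\}$ an orthonormal basis of $E_k$, and $\Spec A(t)\cap I=\{\nu_1(t),\dots,\nu_m(t)\}$ (with multiplicity) for a fixed small interval $I$ around $\lambda_k(\theta)$ and $|t|<\epsilon$. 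Now let $j\le k\le j'$ be the indices with $\lambda_{j-1}(\theta)<\lambda_j(\theta)=\lambda_k(\theta)=\lambda_{j'}(\theta)<\lambda_{j'+1}(\theta)$ (so $m=j'-j+1$) and put $r:=k-j+1$. By Theorem~\ref{thm:main} each map $t\mapsto\lambda_i(\theta_t)$ is continuous; together with the localization just described this forces $\{\nu_1(t),\dots,\nu_m(t)\}=\{\lambda_j(\theta_t),\dots,\lambda_{j'}(\theta_t)\}$ (with multiplicity) for $|t|$ small, so that $\lambda_k(\theta_t)$ is the $r$-th smallest among the real-analytic functions $\nu_i(t)$. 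Since finitely many real-analytic functions admit a definite ordering on a short enough one-sided interval, $t\mapsto\lambda_k(\theta_t)$ coincides with a single branch $\nu_{i_+}$ on some $(0,\epsilon')$ and with a single branch $\nu_{i_-}$ on some $(-\epsilon',0)$; this proves (i), and $\frac{d}{dt}\lambda_k(\theta_t)\big|_{t=0^\pm}=\nu_{i_\pm}'(0)$.

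Differentiating $A(t)\phi_i(t)=\nu_i(t)\phi_i(t)$ at $t=0$, pairing with $\phi_j(0)$, and using self-adjointness of $A(0)$ together with $\nu_i(0)=\nu_j(0)=\lambda_k(\theta)$, gives $\langle\dot A(0)\phi_i(0),\phi_j(0)\rangle=\nu_i'(0)\,\delta_{ij}$; hence $\{\nu_i'(0)\}$ is precisely the spectrum of the Hermitian form $B(\psi,\eta):=\langle\dot A(0)\psi,\eta\rangle$ on $E_k$. To compute $\dot A(0)$, note that for $\psi\in E_k$,
\begin{equation*}
\langle A(t)\psi,\psi\rangle=\int_M e^{-u_t}\,\Bigl|\bar\partial_b\psi-\tfrac{n+1}{2}\,\psi\,\bar\partial_b u_t\Bigr|^2_\theta\,d\vol_\theta,
\end{equation*}
so that, differentiating at $t=0$ with $f=\tfrac{\partial u_t}{\partial t}\big|_{t=0}$,
\begin{equation*}
B(\psi,\psi)=-\int_M f\,|\bar\partial_b\psi|^2_\theta\,d\vol_\theta-(n+1)\int_M\mathrm{Re}\bigl(\bar\psi\,\langle\bar\partial_b\psi,\bar\partial_b f\rangle_\theta\bigr)\,d\vol_\theta.
\end{equation*}
Using the pointwise identity $f\,|\bar\partial_b\psi|^2_\theta+\mathrm{Re}\bigl(\bar\psi\langle\bar\partial_b\psi,\bar\partial_b f\rangle_\theta\bigr)=\mathrm{Re}\langle\bar\partial_b(f\psi),\bar\partial_b\psi\rangle_\theta$ and integrating by parts, $\int_M\mathrm{Re}\langle\bar\partial_b(f\psi),\bar\partial_b\psi\rangle_\theta\,d\vol_\theta=\mathrm{Re}\langle f\psi,\Box_b\psi\rangle=\lambda_k(\theta)\int_M f\,|\psi|^2\,d\vol_\theta$; substituting back, and recalling $\Box_b\psi=\lambda_k(\theta)\psi$ and $\langle\bar\partial_b\psi,\partial_b\bar\psi\rangle=|\bar\partial_b\psi|^2_\theta$, collapses $B(\psi,\psi)$ to $-\int_M f\,L(\psi,\psi)\,d\vol_\theta=Q_f(\psi,\psi)$, whence $B=Q_f$ on $E_k$ by polarization. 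This proves (ii). For (iii) and (iv): if $k=1$ or $\lambda_k(\theta)>\lambda_{k-1}(\theta)$, then $r=1$, so $\lambda_k(\theta_t)=\min_i\nu_i(t)$; since $\nu_i(0)=\lambda_k(\theta)$ for every $i$, a first-order expansion gives $\frac{d}{dt}\lambda_k(\theta_t)\big|_{t=0^+}=\min_i\nu_i'(0)$ and $\frac{d}{dt}\lambda_k(\theta_t)\big|_{t=0^-}=\max_i\nu_i'(0)$, the least and the greatest eigenvalue of $Q_f|_{E_k}$. Symmetrically, if $\lambda_k(\theta)<\lambda_{k+1}(\theta)$, then $r=m$ and $\lambda_k(\theta_t)=\max_i\nu_i(t)$, which yields (iv).

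The main obstacle is twofold. Conceptually, one must verify that after the unitary rescaling the family $A(t)$ really is a real-analytic family of self-adjoint operators with a $t$-independent domain, so that the Rellich-type theorem applies to the isolated, finite-multiplicity eigenvalue $\lambda_k(\theta)$ despite $\Box_b$ having non-compact resolvent --- this is exactly the point at which the infinite-dimensional kernel of $\Box_b$ must be dealt with (it is harmless since only $\lambda_k>0$ and its finite-dimensional spectral subspace are being tracked). Computationally, the work lies in the explicit evaluation of $\dot A(0)$ and the integration by parts collapsing it to $-\int_M f\,L(\psi,\eta)\,d\vol_\theta$, where one has to keep careful track of the three $t$-dependent ingredients: the volume form $e^{(n+1)u_t}\,d\vol_\theta$, the Levi metric $e^{-u_t}\langle\cdot,\cdot\rangle_\theta$ on $(0,1)$-forms, and the conjugating factor $e^{(n+1)u_t/2}$.
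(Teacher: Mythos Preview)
Your argument is correct, and it reaches the same conclusion as the paper, but the route is genuinely different in two respects.

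\textbf{Reduction to a fixed Hilbert space.} The paper does \emph{not} use the unitary $U_t=e^{(n+1)u_t/2}$. Instead it restricts $\Box_{b,t}$ to the orthogonal complement $\mathcal{H}_t^{\perp}$ of the CR functions and conjugates by the (non-unitary) multiplication $\varphi\mapsto e^{-(n+1)u_t}\varphi$, which maps $\mathcal{H}^{\perp}$ bijectively onto $\mathcal{H}_t^{\perp}$. The resulting family $P_t$ on the fixed space $\mathcal{H}^{\perp}$ has \emph{compact resolvent} but is \emph{not self-adjoint}; to handle this the paper states and proves a variant of the Rellich theorem (their Proposition~3.1) for analytic families of closed operators with real spectrum and enough eigenvectors. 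Your choice keeps self-adjointness at the price of losing compactness of the resolvent, and you correctly observe that the classical Rellich/Kato theorem for type-(A) self-adjoint families needs only an isolated eigenvalue of finite multiplicity, which embeddability supplies. Your route is thus more economical: it invokes the textbook theorem verbatim, whereas the paper's route requires an ad-hoc perturbation statement but stays within the compact-resolvent framework familiar from the Laplacian case.

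\textbf{Computation of the first variation.} The paper differentiates the eigenvalue equation $\Box_{b,t}\phi_l(t)=\Lambda_l(t)\phi_l(t)$, uses the explicit transformation formula $e^{u}\widehat{\Box}_bf=\Box_bf-n\langle\partial_b u,\bar\partial_b f\rangle$ to write $\Box_b'\varphi=-f\,\Box_b\varphi-n\langle\partial_b f,\bar\partial_b\varphi\rangle$, and then integrates by parts to identify $\langle\Box_b'\varphi,\psi\rangle$ with $Q_f(\varphi,\psi)$. You instead compute the quadratic form $\langle A(t)\psi,\psi\rangle=\int_M e^{-u_t}\bigl|\bar\partial_b\psi-\tfrac{n+1}{2}\psi\,\bar\partial_b u_t\bigr|^2_\theta\,d\vol_\theta$ directly and differentiate; the subsequent integration by parts via $\mathrm{Re}\,\langle\bar\partial_b(f\psi),\bar\partial_b\psi\rangle$ is a neat shortcut that avoids ever writing down $\Box_b'$ explicitly. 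Both computations collapse to $Q_f$, and your treatment of parts (iii)--(iv) via the ordering of finitely many analytic branches matches the paper's.
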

This theorem should be compared to similar results for Laplacian \cite{AhSa} and sub-Laplacian on CR manifolds \cite{ADS2}. In view of this theorem, we define the notion of critical pseudohermitian structures for the $\lambda_k$-functional as follows: We first denote by $\mathcal{P}_{+}^0$ the space of strictly pseudoconvex pseudohermitian structures with unit volume, i.e.,
\begin{equation}
{\mathcal P}_{+}^0=\left\{\theta \in {\mathcal P}_+(M): \int_M\,d\vol_\theta=1\right\}.
\end{equation}
We say that a pseudohermitian structure $\theta$ is \textit{critical} for the $\lambda_{k}$-functional restricted to $\mathcal P_{+}^0$ if for any analytic deformation $\left\{\theta (t)= e^{u_t}\theta\right\}\subset \mathcal{P}_{+}^0$ with $\theta(0)=\theta$, we have
\begin{equation}
\frac{d}{dt}\lambda_{k}(\theta(t))\big|_{t=0^-} \times \frac{d}{dt}\lambda_{k}(\theta(t))\big|_{t=0^+} \leq 0.
\end{equation}
If $\theta$ is critical for the $\lambda_{k}$-functional, then for any analytic deformation of unit volume $t\to \theta(t)$ (i.e. $\theta(t) \in \mathcal{P}^0_+$ for all $t$), either
\begin{equation}
\lambda_k(\theta(t))\leq \lambda_k(\theta)+o(t)\quad \text{as} \ \ t\to 0,
\end{equation}
or 
\begin{equation}
\lambda_k(\theta(t))\geq\lambda_k(\theta)+o(t)\quad \text{as}\ \ t \to 0.
\end{equation}
Observe that if $k=1$ then only the first possibility can occur.

In the next result, we give a characterization of the criticality for the $\lambda_k$-functional.
\begin{theorem}\label{thm:main2}
Let $M$ be an embeddable strictly pseudoconvex CR manifold and $\theta \in \mathcal{P}^0_{+}$. If $\theta$ is critical for the $\lambda_k$-functional restricted to $\mathcal{P}^0_{+}$, then there exists a finite family $\psi_1, \dots, \psi_d$ of eigenfunctions corresponding to $\lambda_k$ such that 
\begin{equation}\label{e:l1}
\sum_{j=1}^d L(\psi_j) = \sum_{j=1}^{d} \left((n+1)\lambda_k|\psi_j|^2 - n |\bar{\partial}_b \psi_j|^2 \right) = \mathrm{constant}.
\end{equation}
Here $L$ is defined by \eqref{e:Ldef}. If $k=1$ or $\lambda_{k-1}(\theta) < \lambda_{k+1}(\theta)$, then the existence of such a family of eigenfunctions is also sufficient for $\theta$ to be critical for the $\lambda_k$-functional.
\end{theorem}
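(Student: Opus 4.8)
The plan is to use Theorem~\ref{thm:1.2} to recast criticality as a linear condition on the $\lambda_k$-eigenfunctions, and then run a finite-dimensional separating-hyperplane argument. \emph{Step 1: the tangent space, and what criticality means.} Since $d\vol_{e^{u}\theta}=e^{(n+1)u}\,d\vol_\theta$, differentiating the constraint $\int_M d\vol_{\theta(t)}=1$ along $\theta(t)=e^{u_t}\theta$ at $t=0$ yields $\int_M f\,d\vol_\theta=0$, where $f=\partial_t u_t|_{t=0}$; conversely, for any smooth real $f$ with $\int_M f\,d\vol_\theta=0$ the curve $u_t=tf-(n+1)^{-1}\log\int_M e^{(n+1)tf}\,d\vol_\theta$ is analytic, lies in $\mathcal{P}^0_{+}$, and has velocity $f$ at $t=0$. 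Hence the admissible directions form $V:=\{f\in C^\infty(M,\R):\int_M f\,d\vol_\theta=0\}$. For $f\in V$ let $\mu_1(f)\le\cdots\le\mu_m(f)$ ($m=\dim E_k$) be the eigenvalues of $Q_f|_{E_k}$. By Theorem~\ref{thm:1.2}(ii), $\frac{d}{dt}\lambda_k(\theta(t))|_{t=0^\pm}\in\{\mu_1(f),\dots,\mu_m(f)\}$, so their product being $\le0$ forces $\mu_m(f)\ge0$ (two eigenvalues with nonpositive product cannot both be negative, and each is $\le\mu_m(f)$), and replacing $f$ by $-f$ also gives $\mu_1(f)\le0$; thus criticality implies $\mu_1(f)\le0\le\mu_m(f)$ for every $f\in V$. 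Moreover, when $k=1$ or $\lambda_{k-1}(\theta)<\lambda_{k+1}(\theta)$ --- i.e.\ $\lambda_k$ is simple, or sits at the bottom, or at the top of its eigenvalue cluster --- parts (iii)--(iv) identify $\{\frac{d}{dt}\lambda_k|_{t=0^-},\frac{d}{dt}\lambda_k|_{t=0^+}\}$ with $\{\mu_m(f),\mu_1(f)\}$, so in that range $\theta$ is critical \emph{if and only if} $\mu_1(f)\,\mu_m(f)\le0$ for all $f\in V$.

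\emph{Step 2: necessity.} From \eqref{e:Qdef} and $\Box_b\psi=\lambda_k\psi$ on $E_k$, $Q_f(\psi,\psi)=-\int_M f\,L(\psi)\,d\vol_\theta$ with $L(\psi)=(n+1)\lambda_k|\psi|^2-n|\bar\partial_b\psi|^2$ as in \eqref{e:l1}, and the Rayleigh principle gives $\mu_m(f)=-\min\{\int_M f\,L(\psi)\,d\vol_\theta:\psi\in E_k,\ \|\psi\|=1\}$. Hence the consequence ``$\mu_m(f)\ge0$ for all $f\in V$'' of criticality says: there is no $f\in V$ with $\int_M f\,L(\psi)\,d\vol_\theta>0$ for every unit $\psi\in E_k$. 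Now let $W\subset C^\infty(M,\R)$ be the span of the constant function $1$ together with all $L(\psi)$, $\psi\in E_k$; this is finite-dimensional since $L(\psi)$ is a Hermitian-quadratic expression in $\psi$, hence lies in a fixed finite-dimensional subspace of $C^\infty(M)$ as $\psi$ ranges over $E_k$. Let $K$ be the convex hull of $\{L(\psi):\psi\in E_k,\ \|\psi\|=1\}$, a compact convex subset of $W$. If $K$ contained no constant function, the separating-hyperplane theorem in $W$ would give a linear functional $\ell$ on $W$ vanishing on the constants and strictly positive on $K$; representing $\ell$ by $g\in W$ through the $L^2(M,d\vol_\theta)$ inner product produces a smooth $g$ with $\int_M g\,d\vol_\theta=\ell(1)=0$, hence $g\in V$, and $\int_M g\,L(\psi)\,d\vol_\theta=\ell(L(\psi))>0$ for all unit $\psi\in E_k$ --- contradicting criticality. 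So a convex combination $\sum_j t_j L(\psi_j)$ ($t_j>0$, $\sum_j t_j=1$, $\|\psi_j\|=1$) is constant; replacing $\psi_j$ by $\sqrt{t_j}\,\psi_j$ and using that $L$ is homogeneous of degree two turns this into $\sum_j L(\psi_j)=\mathrm{const}$, which is \eqref{e:l1}.

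\emph{Step 3: sufficiency when $k=1$ or $\lambda_{k-1}(\theta)<\lambda_{k+1}(\theta)$.} Given $\lambda_k$-eigenfunctions $\psi_1,\dots,\psi_d$, not all zero, with $\sum_j L(\psi_j)=c$ constant, for every $f\in V$ we have
\[
\sum_{j=1}^d Q_f(\psi_j,\psi_j)=-\int_M f\Big(\sum_{j=1}^d L(\psi_j)\Big)\,d\vol_\theta=-c\int_M f\,d\vol_\theta=0 .
\]
Since $\|\psi_j\|^2\mu_1(f)\le Q_f(\psi_j,\psi_j)\le\|\psi_j\|^2\mu_m(f)$ for each $j$ and some $\psi_j\ne0$, a vanishing sum forces $\mu_1(f)\le0\le\mu_m(f)$, i.e.\ $\mu_1(f)\,\mu_m(f)\le0$; by Step~1 this is exactly the criticality of $\theta$ in this range.

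The analytic substance is already packaged in Theorem~\ref{thm:1.2}, so the rest is soft; the only place demanding real care is Step~1 --- identifying $V$ as the tangent space to $\mathcal{P}^0_{+}$, and tracking, across the three admissible multiplicity patterns (simple eigenvalue, bottom of a cluster, top of a cluster), which eigenvalues of $Q_f|_{E_k}$ are realized as the left and right derivatives.
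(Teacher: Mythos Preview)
Your proof is correct and follows essentially the same route as the paper: reduce criticality to indefiniteness of $Q_f|_{E_k}$ for all mean-zero $f$ via Theorem~\ref{thm:1.2}, then run a convex-separation argument to extract the family $\{\psi_j\}$. The one genuine variation is in Step~2: the paper works with the convex \emph{cone} $\mathcal{C}_k=\{\sum_j L(v_j):v_j\in E_k\}$ inside $L^2(M,d\vol_\theta)$ and separates it from the point~$1$, whereas you pass to the finite-dimensional real span $W$ of $1$ and the $L(\psi)$'s, take the \emph{compact} convex hull $K$ of $\{L(\psi):\|\psi\|=1\}$, and separate $K$ from the line $\R\cdot 1$. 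Your reduction to finite dimensions is legitimate (since $L(\psi)$ is Hermitian-quadratic in $\psi\in E_k$) and makes the separation step marginally cleaner --- compactness of $K$ is automatic, and the Riesz representation of $\ell$ by some $g\in W\subset C^\infty(M,\R)$ avoids any density or smoothing argument --- but the underlying idea is identical.
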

We should point out that although the characterization \eqref{e:l1} is similar to the Riemannian case \cite{AhSa} and sub-Riemannian case \cite{ADS2} (see also \cite{AJK} for a similar result in K\"ahler case), our case exhibits an important difference. Precisely, it is proved in \cite{AhSa} that the criticality of the $\lambda_k(\Delta)$-functional of the Laplacian is characterized by the existence of a finite collection of $\lambda_k(\Delta)$-eigenfunctions $f_j$ such that the sum of squares $f_1^2 + f_2^2 + \dots + f_d^2$ is constant on the manifold. In our characterization, the identity \eqref{e:l1} contains not only the sum of squared norms of the eigenfunctions, but also their first-order derivatives. It is natural to ask whether the term involving derivatives in \eqref{e:l1} can be removed? It is worth noting that in the examples of critical pseudohermitian structures given in Section~\ref{sec:examples}, there always exist collections of eigenfunctions whose sums of squared norms are constant. However, a difficulty in our situation comes from the fact that $\Box_b$ is generally a complex operator and hence the eigenfunctions are \textit{not} necessarily real-valued. This makes the method treating the Laplacian \cite{AhSa} and sub-Laplacian \cite{ADS} cases break down in the Kohn Laplacian case. Nevertheless, the characterization \eqref{e:l1} still leads to the following corollary.

\begin{corollary}\label{cor:mul}
Let $(M,\theta)$ be a compact embeddable pseudohermitian manifold. Suppose that $(M, \theta)$ is homogeneous (i.e., the group of CR diffeomorphisms preserving $\theta$ acts transitively). Then $\theta$ is critical for the $\lambda_k$-functional if either $k=1$, or $k>1$ and $\lambda_{k-1} < \lambda_{k+1}$.
\end{corollary}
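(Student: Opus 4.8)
The plan is to deduce Corollary~\ref{cor:mul} from the sufficiency direction of Theorem~\ref{thm:main2} by exploiting homogeneity to produce the required family of eigenfunctions. Fix $k$ with $k=1$ or $\lambda_{k-1}<\lambda_{k+1}$; in the latter case we may even have $\lambda_{k-1}<\lambda_k$ or $\lambda_k<\lambda_{k+1}$, but since $\lambda_{k-1}<\lambda_{k+1}$ the hypothesis of the sufficiency part of Theorem~\ref{thm:main2} is met. Thus it suffices to exhibit $\psi_1,\dots,\psi_d\in E_k$ with
\begin{equation}\label{e:goal}
\sum_{j=1}^d\bigl((n+1)\lambda_k|\psi_j|^2-n|\bar\partial_b\psi_j|^2\bigr)=\mathrm{constant}.
\end{equation}
Let $G$ be the group of CR diffeomorphisms preserving $\theta$. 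Since elements of $G$ preserve $\theta$, hence $d\vol_\theta$, and commute with $\bar\partial_b$ and $\bar\partial_b^*$, they commute with $\Box_b$; therefore $G$ acts on each eigenspace $E_k$ by a unitary representation $\rho$ (each $E_k$ is finite-dimensional).

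The key step is an averaging argument. Pick any orthonormal basis $\varphi_1,\dots,\varphi_m$ of $E_k$ and consider the function
\begin{equation}
F(x)=\sum_{j=1}^m\bigl((n+1)\lambda_k|\varphi_j(x)|^2-n|\bar\partial_b\varphi_j(x)|^2_\theta\bigr)=\sum_{j=1}^m L(\varphi_j)(x).
\end{equation}
I claim $F$ is independent of the choice of orthonormal basis: if $\varphi_j'=\sum_i U_{ji}\varphi_i$ with $U$ unitary, then $\sum_j\varphi_j'\otimes\overline{\varphi_j'}=\sum_j\varphi_j\otimes\overline{\varphi_j}$ as a section of the relevant bundle, and $L$ is the value of a fixed sesquilinear (diagonal) expression applied to this basis-independent ``reproducing kernel on the diagonal'' (for the $|\psi|^2$ term this is clear; for the $|\bar\partial_b\psi|^2$ term one uses $\sum_j\langle\bar\partial_b\varphi_j',\bar\partial_b\varphi_j'\rangle$ pointwise, again given by the same kernel differentiated). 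Hence $F$ is canonically attached to $E_k$. Now for $g\in G$, the pulled-back basis $g^*\varphi_1,\dots,g^*\varphi_m$ is again orthonormal in $E_k$ (unitarity of $\rho(g)$), so $F\circ g=F$ by basis-independence together with the naturality $L(g^*\varphi)=L(\varphi)\circ g$ (which holds because $L$ is built from $\Box_b$, $\bar\partial_b$, $\partial_b$ and the metric on $(0,1)$-forms, all $G$-invariant). Thus $F$ is a $G$-invariant function, and since $G$ acts transitively, $F$ is constant. Taking $\psi_j=\varphi_j$ gives exactly \eqref{e:goal}, so by Theorem~\ref{thm:main2} the structure $\theta$ is critical for $\lambda_k$.

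The main obstacle I anticipate is the verification of the two invariance/naturality facts used above: first, that $L(\psi)$ for $\psi$ ranging over an orthonormal basis sums to a quantity depending only on the subspace $E_k$ and not on the chosen basis — for the zeroth-order term this is the standard fact that $\sum_j|\varphi_j(x)|^2$ is the diagonal of the spectral projector and hence basis-free, while for the gradient term $\sum_j|\bar\partial_b\varphi_j(x)|^2$ one must check it is likewise the diagonal of $\bar\partial_b\circ\Pi_{E_k}\circ\bar\partial_b^*$ in an appropriate sense; and second, that a CR diffeomorphism $g$ with $g^*\theta=\theta$ genuinely intertwines all the operators, so that $L(g^*\psi)=(L\psi)\circ g$. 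Both are essentially bookkeeping once one writes $L$ invariantly as in \eqref{e:Ldef}, using that $\Box_b\psi=\lambda_k\psi$ on $E_k$ to rewrite $(n+1)\bar\eta\,\Box_b\psi$ as $(n+1)\lambda_k\bar\eta\psi$ and that $-n\langle\bar\partial_b\psi,\partial_b\bar\eta\rangle$ is manifestly a contraction of $\bar\partial_b\psi$ with $\overline{\bar\partial_b\eta}$ against the Levi form; neither step requires a computation longer than a few lines. A minor point to state carefully is why the hypothesis $k=1$ or $\lambda_{k-1}<\lambda_{k+1}$ is exactly what licenses the sufficiency direction of Theorem~\ref{thm:main2}, which I would simply quote.
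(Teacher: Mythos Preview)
Your proposal is correct and follows essentially the same approach as the paper: take an orthonormal basis of $E_k$, use that $G$ acts unitarily on $E_k$ (so that the pulled-back basis is again orthonormal) to show that $\sum_j|\psi_j|^2$ and $\sum_j|\bar\partial_b\psi_j|^2$ are separately $G$-invariant and hence constant by transitivity, then invoke Theorem~\ref{thm:main2}. The paper's proof is terser---it simply observes that the unitary change of basis preserves both sums pointwise, without framing this as basis-independence of a reproducing-kernel diagonal---but the content is the same.
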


The paper is organized as follows. In Section~\ref{sec:pre}, we study the continuity of eigenvalue functionals and prove Theorem~\ref{thm:main} and Corollary~\ref{cor:semicont}. In Section~\ref{sec:der}, we study parametrizations of the eigenvalues using the classical perturbation theory that is adapted to our situation. We study the critical pseudohermitian structures and prove Theorem~\ref{thm:main2} and Corollary~\ref{cor:mul} in Section~\ref{sec:proofs}. In Section~\ref{sec:ratios}, we extend some results for $\lambda_k$-functionals to the case of ratio functionals $\frac{\lambda_{k+1}}{\lambda_k}$ of two consecutive eigenvalues and give
characterizations of critical pseudohermitian structures for them. In Section~\ref{sec:examples}, we give several explicit examples of both homogeneous and nonhomogeneous critical structures.
\section{Continuity of the eigenvalue functionals}\label{sec:pre}
\subsection{The Kohn Laplacian on pseudohermitian manifolds}
We briefly recall some basic notions of pseudohermitian geometry and the Kohn Laplacian on pseudohermitian manifolds. For more details, we refer the readers to \cite{DT} and \cite{CS}. Let $(M,\theta)$ be a strictly pseudoconvex pseudohermitian manifold. Let $T$ be the Reeb field associated to $\theta$, i.e., $T$ is the unique real vector field that satisfies $T \rfloor d\theta = 0$ and $\theta(T) = 1$. An admissible coframe on an open subset of $M$
is a collection of $n$ complex $(1,0)$-forms $\theta^1, \theta^2,\dots \theta^n$ whose restrictions to 
$T^{1,0}M$ form a basis for $(T^{1,0}M)^{\ast}$ and $\theta^{\alpha} (T) = 0$. There exists
a holomorphic frame $\{Z_{\alpha}\colon \alpha = 1,2,\dots n\}$ of $T^{1,0}M$ such that
$\{T, Z_{\alpha}, Z_{\bar{\alpha}}\}$ is the dual frame for $\{\theta, \theta^{\alpha}, \theta^{\bar{\alpha}}\}$.
The Levi form associated to $\theta$ is given by the Hermitian matrix $h_{\alpha\bar{\beta}}$, where
\begin{equation}
d\theta = i h_{\alpha\bar{\beta}} \theta^{\alpha} \wedge \theta^{\bar{\beta}}.
\end{equation}
Let $\partial_b$ be the Cauchy--Riemann operator. For a smooth function $f$, it holds that $\partial_b f = f_{\alpha}\theta^{\alpha}$
(summation convention) where $f_\alpha = Z_{\alpha}f$. The formal adjoint of $\partial_b$ on functions (with respect to the Levi form
and the volume element $d\vol_{\theta}:=\theta\wedge (d\theta)^n$) is given by $\partial_b^{*} = -\delta_b$. Here $\delta_b$ is the divergence operator taking $(1,0)$-forms to functions by $\delta_b(\sigma_{\alpha}\theta^{\alpha}) = \sigma_{\alpha,}{}^{\alpha}$. Here, a Greek index preceded by a comma indicates the covariant derivative with respect to the Tanaka--Webster connection on $(M,\theta)$.
The Kohn Laplacian associated to $\theta$ acting on functions is 
$\Box_b = \bar{\partial}_b ^* \bar{\partial}_b $, 
where $\bar{\partial}_b$ is the conjugate of $\partial_b$. In terms of the Tanaka--Webster covariant derivatives (see \cite{DT}), 
\begin{equation}
\Box_b f 
=
- f_{\bar{\alpha},}{}^{\bar{\alpha}}.
\end{equation}
If $M$ is compact and embeddable, then $\ker \Box_b$ consists of the CR functions and is of infinite dimension. In particular, $\ker \Box_b$ does \textit{not} depend on the choice of the pseudohermitian structure. As already mentioned in the introduction, the basic spectral theory for the Kohn Laplacian on compact embeddable strictly pseudoconvex pseudohermitian manifolds are well understood, see, e.g., \cite{CS}.

We shall need a formula relating the Kohn Laplacian operators associated to different pseudohermitian structures. Observe that if $\hat{\theta} = e^{u}\theta$, then $|\bar{\partial}_b f|_{\hat{\theta}} = e^{-u/2}|\bar{\partial}_b f|_{\theta}$ and
\begin{equation}
d\vol_{\hat{\theta}} = \hat{\theta} \wedge (d\hat{\theta})^n = e^{(n+1)u} d\vol_{\theta}.
\end{equation}
Furthermore, the Kohn Laplacian changes as follows:
\begin{proposition}\label{prop:boxtrans}
Let $(M,\theta)$ be a pseudohermitian manifold and let $\hat{\theta} = e^{u}\theta$. Denote by $\Box_b$ and $\hat{\Box}_b$ the Kohn Laplacian operators that correspond to $\theta$ and $\hat{\theta}$, respectively. Then
\begin{equation}\label{e:boxtrans}
e^u\,\hat{\Box}_{b} f = \Box_b f - n\,\langle \partial_b u , \bar{\partial}_b f\rangle.
\end{equation}
\end{proposition}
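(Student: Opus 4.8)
The plan is to avoid the Tanaka--Webster calculus and argue through the weak formulation of $\Box_b$, using only three elementary facts already recorded above: the operator $\bar{\partial}_b$ on functions depends only on the CR structure, not on the contact form; the pointwise Hermitian norm on $(0,1)$-forms rescales as $\langle\bar{\partial}_b\phi,\bar{\partial}_b\psi\rangle_{\hat{\theta}} = e^{-u}\langle\bar{\partial}_b\phi,\bar{\partial}_b\psi\rangle_{\theta}$; and $d\vol_{\hat{\theta}} = e^{(n+1)u}\,d\vol_{\theta}$.

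First, for arbitrary $f,g\in C^{\infty}(M)$ I would write, using $\hat{\Box}_b = \bar{\partial}_b^{*}\bar{\partial}_b$ with the adjoint taken relative to $\hat{\theta}$,
\[
\langle \hat{\Box}_b f, g\rangle_{\hat{\theta}} = \int_M \langle \bar{\partial}_b f, \bar{\partial}_b g\rangle_{\hat{\theta}}\, d\vol_{\hat{\theta}},
\]
and then convert the fiber metric and the volume form to the $\theta$-geometry: the two conformal weights $e^{-u}$ and $e^{(n+1)u}$ combine, so the right-hand side equals $\int_M e^{nu}\langle\bar{\partial}_b f,\bar{\partial}_b g\rangle_{\theta}\, d\vol_{\theta}$. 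The next step is to absorb the factor $e^{nu}$ into $\bar{\partial}_b$ via the Leibniz rule $\bar{\partial}_b(e^{nu}g) = e^{nu}\bar{\partial}_b g + n\,e^{nu}g\,\bar{\partial}_b u$; since $u$ is real,
\[
e^{nu}\langle\bar{\partial}_b f,\bar{\partial}_b g\rangle_{\theta} = \langle\bar{\partial}_b f, \bar{\partial}_b(e^{nu}g)\rangle_{\theta} - n\,e^{nu}\bar{g}\,\langle \partial_b u, \bar{\partial}_b f\rangle
\]
pointwise, where I have identified $\langle\bar{\partial}_b f,\bar{\partial}_b u\rangle_{\theta} = \langle\partial_b u,\bar{\partial}_b f\rangle$ (again using that $u$ is real-valued). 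Integrating over $M$, the first term on the right becomes $\int_M (\Box_b f)\,\overline{e^{nu}g}\, d\vol_{\theta} = \int_M e^{nu}(\Box_b f)\,\bar{g}\, d\vol_{\theta}$ by the definition of $\Box_b$ relative to $\theta$, so
\[
\langle \hat{\Box}_b f, g\rangle_{\hat{\theta}} = \int_M e^{nu}\bigl(\Box_b f - n\langle\partial_b u,\bar{\partial}_b f\rangle\bigr)\,\bar{g}\, d\vol_{\theta}.
\]

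Finally I would rewrite the left-hand side as $\langle \hat{\Box}_b f, g\rangle_{\hat{\theta}} = \int_M e^{(n+1)u}(\hat{\Box}_b f)\,\bar{g}\, d\vol_{\theta}$, equate the two displays, and let $g$ range over $C^{\infty}(M)$ to conclude $e^{(n+1)u}\hat{\Box}_b f = e^{nu}\bigl(\Box_b f - n\langle\partial_b u,\bar{\partial}_b f\rangle\bigr)$; dividing by $e^{nu}$ gives \eqref{e:boxtrans}. I expect the only real obstacle to be bookkeeping: one must keep careful track of the fact that $\bar{\partial}_b^{*}$, unlike $\bar{\partial}_b$, feels the conformal change through both the volume form and the Levi form in the fiber inner product — the weights $e^{(n+1)u}$ and $e^{-u}$ together giving the net $e^{nu}$ — and that the cross term produced by the Leibniz rule is exactly $-n\langle\partial_b u,\bar{\partial}_b f\rangle$ with the stated sign and coefficient. (Alternatively, one can derive \eqref{e:boxtrans} by a direct but longer computation from the transformation rules for the Tanaka--Webster connection together with $\hat{h}^{\alpha\bar{\beta}} = e^{-u}h^{\alpha\bar{\beta}}$, substituted into $\hat{\Box}_b f = - f_{\bar{\alpha},}{}^{\bar{\alpha}}$ computed for $\hat{\theta}$.)
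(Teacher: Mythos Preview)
Your argument is correct and complete; the bookkeeping with the conformal weights and the Leibniz rule is accurate, and the identification $\langle\bar{\partial}_b f,\bar{\partial}_b u\rangle_{\theta}=\langle\partial_b u,\bar{\partial}_b f\rangle$ for real $u$ is exactly what is needed. However, your route is genuinely different from the paper's. The paper carries out the local-frame computation you mention only as an alternative at the end: it chooses an admissible coframe for $\theta$, writes $\hat{Z}_{\alpha}=e^{-u/2}Z_{\alpha}$, invokes Lee's transformation law for the Tanaka--Webster connection forms $\hat{\omega}_{\beta}{}^{\alpha}$, computes $\hat{\nabla}_{\beta}\hat{\nabla}_{\bar{\gamma}}f$ pointwise, and traces. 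Your weak-formulation approach is more elementary and coordinate-free---it uses only the rescaling of the fiber metric and the volume form already stated in the paper, and entirely avoids the connection-form formula from \cite{Lee}. The paper's approach, on the other hand, is a purely pointwise calculation that does not pass through integration or a density argument, and would adapt more directly if one wanted second-covariant-derivative information beyond the trace.
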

\begin{proof} We can assume that $f$ is smooth. Choose a local holomorphic frame $\{Z_{\alpha}\}$ and its dual coframe $\{\theta^{\alpha}\}$ that is admissible for $\theta$. If $\hat{\theta} = e^{u}\theta$, then we can take $\hat{Z}_{\alpha} = e^{-u/2} Z_{\alpha}$ and its dual coframe $\hat{\theta}^{\alpha} = e^{u/2}(\theta^{\alpha} + \sqrt{-1}u^{\alpha}\theta)$. Then the corresponding Levi matrices satisfy $\hat{h}_{\beta\bar{\gamma}} = h_{\beta\bar{\gamma}}$. Furthermore, the connection forms $\omega_{\beta}{}^{\alpha}$ and $\hat{\omega}_{\beta}{}^{\alpha}$ satisfy \cite{Lee},
\begin{align}\label{e:cftrans}
\hat{\omega}_{\beta}{}^{\alpha}
& =
\omega_{\beta}{}^{\alpha}
+
(u_{\beta}\theta^{\alpha} - u^{\alpha} \theta_{\beta})
+
\frac{1}{2}\delta_{\beta}^{\alpha}(u_{\mu}\theta^{\mu} - u^{\mu}\theta_{\mu}) \notag \\
& \quad +
\frac{\sqrt{-1}}{2}\left(u^{\alpha}{}_{\beta} + u_{\beta}{}^{\alpha} + 2u_{\beta}u^{\alpha} + 2\delta_{\beta}^{\alpha} u_{\mu}u^{\mu}\right) \theta.
\end{align}
Denote $\hat{\nabla}_{\beta} = \hat{\nabla}_{\hat{Z}_{\beta}}$, $\nabla_{\beta} = \nabla_{Z_{\beta}}$ the Tanaka--Webster covariant differentiation with respect to $\hat{\theta}$ and $\theta$ in the corresponding frames $\hat{Z}_{\beta}$ and $Z_{\beta}$. Then, by \eqref{e:cftrans},
\begin{align}
\hat{\nabla}_{\beta} \hat{\nabla}_{\bar{\gamma}} f
=
\hat{Z}_{\beta} \hat{Z}_{\bar{\gamma}} f - \overline{\hat{\omega}_{\gamma}{}^{\alpha}(\hat{Z}_{\bar{\beta}})}\, \hat{Z}_{\bar{\alpha}} f 
=
e^{-u} \nabla_{\beta} \nabla_{\bar{\gamma}} f + e^{-u} u^{\bar{\alpha}} f_{\bar{\alpha}} h_{\beta\bar{\gamma}}.
\end{align}
Taking the trace with respect to $\hat{h}_{\beta\bar{\gamma}}$, we obtain \eqref{e:boxtrans}. The proof is complete.
\end{proof}
\subsection{The Max-mini principle}\label{sec:con}
In this section, we prove an analogue for the Kohn Laplacian of the well-known Max-mini principle for eigenvalues of the (sub-)Laplacian in \cite{BU} and \cite{ADS}. This is an important ingredient for our proof of Theorem~\ref{thm:main}. Precisely, for each $k$-dimensional complex subspace
$L_{k} \subset C^{\infty}(M,\C)$ that satisfies $L_k \cap \ker \Box_b = \{0\}$, we put
\begin{equation}
\Lambda_{\theta}(L_k) = \sup \left\{\frac{\|\Box_b f\|^2}{\|\bar{\partial}_{b} f\|^2}\colon f \in L_k, \ f\ne 0\right\}.
\end{equation}
\begin{lemma}[Max-mini Principle]
Let $(M,\theta)$ be a compact embeddable strictly pseudoconvex pseudohermitian manifold. Then
\begin{equation}
\lambda_{k}(\theta) = \inf_{L_k} \Lambda_{\theta}(L_k),
\end{equation}
where the infimum is taken over all subspaces $L_k \subset C^{\infty}(M,\C)$ of complex dimension~$k$ that satisfies $L_k \cap \ker \Box_b = \{0\}$.
\end{lemma}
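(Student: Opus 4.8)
The plan is to mimic the classical Courant–Fischer argument, but carefully accounting for the fact that $\Box_b$ has infinite-dimensional kernel consisting of CR functions. Write $\Hperp$ for the orthogonal complement of $\ker\Box_b$ in $L^2(M,d\vol_\theta)$; on (the smooth vectors of) $\Hperp$ the operator $\Box_b$ is positive with discrete spectrum $0<\lambda_1\le\lambda_2\le\cdots$, and we may choose an $L^2$-orthonormal system of smooth eigenfunctions $\psi_1,\psi_2,\dots$ spanning a dense subspace of $\Hperp$. The key observation relating the two Rayleigh quotients is that for $f=\sum_j c_j\psi_j\in\Hperp$ one has $\|\Box_b f\|^2=\sum_j\lambda_j^2|c_j|^2$ while $\|\bar\partial_b f\|^2=\langle\Box_b f,f\rangle=\sum_j\lambda_j|c_j|^2$, so the functional $\Lambda_\theta$ is, on $\Hperp$, exactly the Rayleigh quotient of $\Box_b$ acting in the inner product weighted so that its ``eigenvalues'' are the $\lambda_j$; this is the standard setup for min–max.

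The two inequalities are then handled separately. For $\lambda_k(\theta)\ge\inf_{L_k}\Lambda_\theta(L_k)$: take $L_k=\mathrm{span}\{\psi_1,\dots,\psi_k\}$, which is $k$-dimensional, lies in $C^\infty(M,\C)$, and satisfies $L_k\cap\ker\Box_b=\{0\}$ since $L_k\subset\Hperp$; for any nonzero $f=\sum_{j=1}^k c_j\psi_j$ one computes $\|\Box_b f\|^2/\|\bar\partial_b f\|^2=\big(\sum\lambda_j^2|c_j|^2\big)/\big(\sum\lambda_j|c_j|^2\big)\le\lambda_k$, so $\Lambda_\theta(L_k)\le\lambda_k(\theta)$. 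For the reverse inequality $\lambda_k(\theta)\le\Lambda_\theta(L_k)$ for every admissible $L_k$: given such an $L_k$, I would decompose $f=f^0+f^\perp$ with $f^0\in\ker\Box_b$ and $f^\perp\in\Hperp$; since $\Box_b f=\Box_b f^\perp$ and, crucially, $\|\bar\partial_b f\|^2=\langle\Box_b f,f\rangle=\langle\Box_b f^\perp,f^\perp\rangle=\|\bar\partial_b f^\perp\|^2$, the Rayleigh quotient of $f$ equals that of $f^\perp$. The map $f\mapsto f^\perp$ is injective on $L_k$ (its kernel would be $L_k\cap\ker\Box_b=\{0\}$), so $\widetilde L_k:=\{f^\perp:f\in L_k\}$ is a $k$-dimensional subspace of $\Hperp$ with $\Lambda_\theta(\widetilde L_k)=\Lambda_\theta(L_k)$. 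Now apply the ordinary min–max for the positive discrete operator $\Box_b$ on $\Hperp$: any $k$-dimensional subspace of $\Hperp$ contains a nonzero vector orthogonal to $\psi_1,\dots,\psi_{k-1}$, hence with Rayleigh quotient $\ge\lambda_k$, giving $\Lambda_\theta(L_k)=\Lambda_\theta(\widetilde L_k)\ge\lambda_k(\theta)$.

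A technical point that needs care is the density/approximation issue: the infimum is taken over subspaces of \emph{smooth} functions, whereas the spectral decomposition naturally lives in $L^2$. This is resolved because the eigenfunctions $\psi_j$ are themselves smooth (stated in the introduction), so the optimal $L_k$ built above is automatically smooth; and in the reverse inequality we start from a smooth $L_k$ and only project, never needing to approximate. One should also check that $\|\bar\partial_b f\|^2=\langle\Box_b f,f\rangle$ holds for smooth $f$ — this is just the definition $\Box_b=\bar\partial_b^*\bar\partial_b$ together with integration by parts on the compact manifold $M$ — and that the projection $f^\perp$ of a smooth $f$ onto $\Hperp$ still has finite Rayleigh quotient, which follows since $f\notin\ker\Box_b$ forces $f^\perp\neq0$ and $\|\bar\partial_b f^\perp\|^2=\langle\Box_b f,f\rangle>0$. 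I expect the main obstacle to be purely bookkeeping: keeping the dependence on $\ker\Box_b$ straight and verifying that passing to $f^\perp$ genuinely preserves both the dimension count and the value of $\Lambda_\theta$, rather than any deep analytic difficulty.
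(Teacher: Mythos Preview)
Your proposal is correct and follows essentially the same approach as the paper: both test on $\mathrm{span}\{\psi_1,\dots,\psi_k\}$ for the inequality $\inf_{L_k}\Lambda_\theta(L_k)\le\lambda_k$, and both use the eigenfunction expansion together with a dimension count for the reverse inequality. The only cosmetic difference is that the paper phrases the reverse inequality as a proof by contradiction (assuming $\Lambda_\theta(L_k)<\lambda_k$ and exhibiting an injective linear map from $L_k$ into a space of dimension $<k$), whereas you argue directly by projecting $f\mapsto f^\perp$ onto $\Hperp$ and locating a vector orthogonal to $\psi_1,\dots,\psi_{k-1}$---these are equivalent formulations of the same idea.
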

\begin{proof} 
Let $\{e_j\}_{j=1}^{\infty}$ be a complete orthonormal system of (smooth) eigenfunctions of $(\ker\Box_b) ^{\perp}$, with $\Box_b e_j = \lambda_j e_j$. For arbitrary $f\in L^2(M,d\vol_{\theta})$, we expand
\begin{equation}
f = f_0+\sum_{j=1}^\infty a_j(f) e_j,
\end{equation}
where $f_0$ is CR holomorphic, $a_j(f) \in \C$, and the series is convergent in $L^2(M,d\vol_{\theta})$. Let $L_{k}^0$ be the subspace of $C^{\infty}(M,\C)$ spanned by $e_1,\dots e_k$. 
If $f\in L_{k}^0$, then 
\begin{equation}
f = \sum_{j=1}^k a_j(f) e_j.
\end{equation}
Thus, $\Box_b f = \sum_{j=1}^k a_j(f) \lambda_j e_j$, and hence,
\begin{align}
\|\Box_{b} f\|^2 
& = \sum_{j=1}^k | a_j(f)|^2 \lambda_j^2 \notag \\ 
& \leq \sum_{j=1}^k | a_j(f)|^2 \lambda_j\lambda_k \notag \\
& = \lambda_k \|\bar{\partial}_{b} f\|^2 .
\end{align}
Consequently,
\begin{equation}\label{e:21}
\Lambda_{\theta}(L_k^0) \leq \lambda_k(\theta).
\end{equation}
Here we have used an argument that is somewhat similar to those in \cite{BU} and \cite{ADS}.

To prove the reverse inequality, we shall also adapt the usual argument as appeared \cite{ADS, BU}. First observe that the case $k=1$ is immediate and well-known (see, e.g., Corollary~3.2 in \cite{DLL}). Thus, suppose that $k\geq 2$ and assume, for a contradiction, that there exists a subspace $L_k$ of complex dimension $k$ in $C^{\infty}(M,\C) $ that satisfies 
$L_k \cap \ker \Box_b = \{0\}$ and 
\begin{equation}
\Lambda_{\theta}(L_k) < \lambda_{k}.
\end{equation}
For each $f\in L_k$, 
\begin{align}
\Lambda_{\theta}(L_k) \sum_{j=0}^{\infty} |a_j(f)|^2\lambda_j
& = \Lambda_{\theta}(L_k)\cdot (f , \Box_b f)_{L^2} \notag \\
& \geq \|\Box_b f\|^2 \notag \\
& = \sum_{j=0}^{\infty} \lambda_j^2 |a_j(f)|^2.
\end{align}
Thus, for any $m\geq 1$,
\begin{equation}\label{e:11}
\sum_{j=1}^{m} |a_j(f)|^2 \lambda_j\left( \Lambda_{\theta}(L_k) - \lambda_j\right)
\geq
\sum_{j=m+1}^{\infty} |a_j(f)|^2 \lambda_j \left( \lambda_j - \Lambda_{\theta}(L_k) \right).
\end{equation}
In particular, choose $ m = \max\{ j \geq 0 \colon \lambda_j(\theta) \leq \Lambda_{\theta} (L_k)\}$.
Observe that $0\leq m \leq k-1$ since $\lambda_k > \Lambda_{\theta}(L_k)$.
Let $\Phi \colon L_k \to C^{\infty}(M, \C)$ be the linear map given by
\begin{equation}
\Phi(f) = \sum_{j = 1}^{m} a_j(f) e_j.
\end{equation}
Let $\widetilde{f} \in \ker \Phi$, then $a_j(\widetilde{f}) = 0$ for all $j = 1,2,\dots, m$.
Applying \eqref{e:11} to $\widetilde{f}$ yields 
\begin{equation}
\sum_{j=m+1}^{\infty} |a_j(\widetilde{f})|^2 \lambda_j \left( \lambda_j - \Lambda_{\theta}(L_k) \right) \leq 0.
\end{equation}
Since $\lambda_j - \Lambda_{\theta}(L_k) >0 $ for all $j \geq m+1$ we deduce that
$a_j(f_0) = 0$ for all $j=1,2,\dots$. Thus, $\ker \Phi = \{0\}$, i.e, $\Phi$ is injective. This contradicts the fact that 
the image $\Phi(L_k)$ has dimension at most $m < k = \dim L_k$. The proof is complete. 
\end{proof}
\subsection{Proof of Theorem~\ref{thm:main} and Corollary~\ref{cor:semicont}}
\begin{proof}[Proof of Theorem~\ref{thm:main}] Let $L_k$ be a $k$-dimensional subspace of $L^2(M, \theta)$ such that $L_k \cap \ker \Box_b = \{0\}$. 
For each $f\in L_k$, $f\ne 0$, we define two quotients $Q$ and $\widehat{Q}$ as follows.
\begin{equation}
Q= \frac{ \|\Box_b f\|^2_{\theta}}{\left\|\bar{\partial}_b f \right\|^2_{\theta}} \quad
\text{and} \quad \widehat{Q} = \frac{\|\widehat{\Box}_b f\|^2_{\hat{\theta}}}{\left\|\bar{\partial}_b f \right\|^2_{\hat{\theta}}}.
\end{equation}
By \eqref{e:boxtrans}, we have
\begin{align}
|\widehat{\Box}_b f|^2
& =
e^{-2u}|\Box_b f - n \langle \partial_b u , \bar{\partial}_b f\rangle |^2 \notag \\
& = 
e^{-2u}\left[|\Box_b f|^2 + n^2 |\langle \partial_b u , \bar{\partial}_b f\rangle |^2 - 2n \Re \left((\Box_b f) \overline{\langle \partial_b u , \bar{\partial}_b f\rangle}\right)\right].
\end{align}
We deduce, since $d\vol_{\hat{\theta}} = e^{(n+1)u} d\vol_\theta$, that
\begin{align}
\|\widehat{\Box}_b f\|_{\hat{\theta}}^2
& = 
\int_M e^{(n-1)u}\left[|\Box_b f|^2 + n^2 |\langle \partial_b u , \bar{\partial}_b f\rangle |^2 - 2n \Re \left((\Box_b f) \overline{\langle \partial_b u , \bar{\partial}_b f\rangle}\right)\right] d\vol_\theta\notag \\
& \leq 
e^{(n-1)\delta} \left(\|\Box_b f\|_{\theta}^2 + n^2 \delta'^2 \|\bar{\partial}_b f\|^2 +2n \delta' \|\Box_ b f\| \cdot \|\bar{\partial}_b f\| \right).
\end{align}
We have used $|u| < \delta$, $ |\partial_b u|_{\theta} \leq \delta'$ on $M$, and the Cauchy--Schwarz inequality. On the other hand,
\begin{align}
\|\bar{\partial}_b f\|_{\hat{\theta}}^2
=
\int _M |\bar{\partial}_b f|_{\theta}^2\, e^{nu} d\vol_\theta  \geq 
e^{-n \delta} 	\|\bar{\partial}_b f\|_{\theta}^2.
\end{align}
We then deduce that
\begin{align}
\widehat{Q} 
& \leq 
e^{(2n-1) \delta}\left(\sqrt{Q} + n \delta'\right)^2.
\end{align}
From the Max-Mini principle, we deduce that
\begin{align}
\lambda_k(\hat{\theta}) \leq e^{(2n-1)\delta}\left(\sqrt{\lambda_k(\theta)} + n \delta'\right)^2.
\end{align}
This is the second inequality in \eqref{e:son}. To prove the first inequality, we exchange the roles of $\theta$ and $\hat{\theta}$. Observe
that $\hat{\theta} = e^{-u}\theta$ and $|\bar{\partial}_b u|_{\hat{\theta}} = e^{u/2} |\bar{\partial}_b u|_{\theta} < e^{\delta/2}\delta'.$ From the argument above,
\begin{equation}
\sqrt{\lambda_k(\theta)} \leq e^{(n-1/2)\delta} \left(\sqrt{\lambda_k(\hat{\theta)}} + n e^{\delta/2}\delta'\right),
\end{equation}
which clearly implies the first inequality in \eqref{e:son}. The proof is complete.
\end{proof}
\begin{proof}[Proof of Corollary~\ref{cor:semicont}]
We put $\lambda = \lambda_k(\theta)$ and $m = m_k(\theta)$. We first consider the case $\lambda_k(\theta) = \lambda_1(\theta)$. Thus, $1\leq k\leq m $ and 
\begin{equation}
\lambda_1(\theta) = \cdots  = \lambda_{m}(\theta) < \lambda_{m+1}(\theta) \leq \cdots 
\end{equation}
Put $\epsilon: = (1/2) (\lambda_{m+1}(\theta) - \lambda) >0$ and choose $\delta > 0$ and $\delta' > 0$ such that
\begin{equation}
\lambda_{m+1} - \epsilon
\leq 
\left(e^{-(n-1/2) \delta} \sqrt{\lambda_{m+1}(\theta)} - n\delta' e^{n\delta} \right)^2.
\end{equation}
Using Theorem~\ref{thm:main}, we deduce that that for $j\geq m+1$ and $\hat{\theta}: = e^u \theta$, 
\begin{align}
\lambda + \epsilon
=
\lambda_{m+1}(\theta) - \epsilon
& \leq 
\left(e^{-(n-1/2) \delta} \sqrt{\lambda_{m+1}(\theta)} - n\delta' e^{n\delta} \right)^2 \notag \\
& \leq 
\left(e^{-(n-1/2) \delta} \sqrt{\lambda_{j}(\theta)} - n\delta' e^{n\delta} \right)^2 \notag \\
& \leq \lambda_j(\hat{\theta}),
\end{align}
provided that $|u| < \delta$ and $|\partial_b u|_{\theta} < \delta'$ on $M$. Consequently, $m_k(\hat{\theta}) \leq m_k(\theta)$ for all $\hat{\theta}$ that is ``closed enough'' to $\theta$, as desired.

Next, we consider the case $\lambda_k(\theta) > \lambda_1(\theta)$. Then for some $\ell$ with $1\leq \ell < k \leq \ell +m$,
\begin{equation}
\lambda_{\ell}(\theta) < \lambda_{\ell+1}(\theta) = \cdots = \lambda_k(\theta) = \cdots = \lambda_{\ell+m}(\theta) < \lambda_{\ell + m +1} \leq \cdots  
\end{equation}
Arguing similarly as above, we can find $\epsilon>0, \delta > 0$, and $\delta' > 0$ such that whenever $\hat{\theta} = e^u\theta$ with $|u| < \delta$ and $|\partial_b u|_{\theta} < \delta'$ on $M$, it holds that
\begin{equation}
\lambda + \epsilon \leq \lambda_j(\hat{\theta}), \quad j \geq \ell + m +1,
\end{equation}
and
\begin{equation}
\lambda_j(\hat{\theta}) \leq \lambda - \epsilon, \quad j =1,2, \dots , \ell.
\end{equation}
Hence, $m_k(\hat{\theta}) \leq m_k(\theta)$. The proof is complete.
\end{proof}
\begin{corollary}
Let $t \mapsto u_t$ be a $C^{1,\alpha}$ curve of smooth functions on $M$, with $\alpha>0$, and $\theta(t) = e^{u_t}\theta$. Then the curve $t\mapsto \lambda_k(\theta(t))$ is differentiable almost everywhere.
\end{corollary}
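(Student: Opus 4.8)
The plan is to deduce the statement from Theorem~\ref{thm:main} together with the one-dimensional Rademacher theorem: a function of one real variable that is locally Lipschitz is differentiable almost everywhere. Hence it suffices to show that $t\mapsto\lambda_k(\theta(t))$ is Lipschitz on every compact interval, and the Lipschitz bound will be produced by feeding the $C^1$-in-$t$ data $u_t$ into the explicit estimate \eqref{e:son}.

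First I would fix a compact interval $I=[a,b]$ and record a uniform first-order bound on the family. Since $t\mapsto u_t$ is $C^{1,\alpha}$, it is in particular a $C^1$ curve in $C^1(M)$, so the velocity $t\mapsto\dot u_t$ is continuous into $C^1(M)$ and $\{\dot u_\tau:\tau\in I\}$ is bounded there; thus there is a constant $C=C(I)$ with $\sup_M|\dot u_\tau|\le C$ and $\sup_M|\bar\partial_b\dot u_\tau|_\theta\le C$ for all $\tau\in I$. For $s,t\in I$ write $\theta(t)=e^{u_t-u_s}\theta(s)$; the fundamental theorem of calculus gives $u_t-u_s=\int_s^t\dot u_\tau\,d\tau$, whence $\sup_M|u_t-u_s|\le C|t-s|$, and since $|\bar\partial_b v|_{\theta(s)}=e^{-u_s/2}|\bar\partial_b v|_\theta$ with $u_s$ uniformly bounded on $I$, also $\sup_M|\bar\partial_b(u_t-u_s)|_{\theta(s)}\le C'|t-s|$ with $C'$ independent of $s,t\in I$. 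Hence, after an arbitrarily small enlargement to make the inequalities strict, $\delta:=C|t-s|$ and $\delta':=C'|t-s|$ are admissible in Theorem~\ref{thm:main} applied with base structure $\theta(s)$ and $\hat\theta=\theta(t)$.

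Plugging these choices into \eqref{e:son} and using $e^{(n-1/2)\delta}=1+O(|t-s|)$ together with the fact that $\sqrt{\lambda_k(\theta(s))}$ is continuous, hence bounded, on $I$ (by Theorem~\ref{thm:main}), I obtain $\big|\sqrt{\lambda_k(\theta(t))}-\sqrt{\lambda_k(\theta(s))}\big|\le K|t-s|$ for all $s,t\in I$, with $K=K(I)$. Multiplying through by the bounded factor $\sqrt{\lambda_k(\theta(t))}+\sqrt{\lambda_k(\theta(s))}$ shows that $t\mapsto\lambda_k(\theta(t))$ is Lipschitz on $I$; as $I$ was arbitrary it is locally Lipschitz on $\R$, and the conclusion follows from Rademacher's theorem. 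Equivalently, the same computation shows that $t\mapsto\theta(t)$ is a locally Lipschitz curve in $(\mathcal{P}_+,d_0)$, so one may instead quote the ``locally Lipschitz'' half of Theorem~\ref{thm:main} and compose.

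The argument is essentially routine once Theorem~\ref{thm:main} is in hand; the only points deserving care are the passage from ``$C^{1,\alpha}$ curve of smooth functions'' to the uniform bound $d_0(\theta(t),\theta(s))\le C|t-s|$ on compact intervals, and the harmless comparison between the Levi norm with respect to the moving base point $\theta(s)$ used in \eqref{e:son} and that with respect to the fixed $\theta$. Note that $C^1$-regularity in $t$ already suffices; the Hölder exponent $\alpha>0$ plays no role in this proof.
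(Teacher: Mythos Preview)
Your proof is correct and follows exactly the same route as the paper: locally Lipschitz from Theorem~\ref{thm:main}, then Rademacher. The paper's own proof is a two-line sketch (``locally Lipschitz by Theorem~\ref{thm:main}, then Rademacher''), so your version simply fills in the details the authors left implicit, including the observation that only $C^1$ regularity in $t$ is actually used.
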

\begin{proof}
Observe that $t\mapsto \lambda_k(\theta(t))$ is locally Lipschitz by Theorem~\ref{thm:main}, the conclusion then follows from the well-known Rademacher's theorem; see \cite[Theorem~7.20]{rudin1986}.
\end{proof}
\section{Derivatives of the eigenvalue functionals}\label{sec:der}
We make use of a perturbation result which is an adaptation of the well-known general theory due to Rellich \cite{FR}, Alekseevsky, Kriegl, 
Losik, and Michor \cite{AKL} for unbounded self-adjoint operators with compact resolvents. The literature regarding the perturbation theory for self-adjoint operators is vast and we cannot describe it in detail here. We refer the readers to, e.g., \cite{FR,KM03} and the references therein for a detailed account.
\begin{proposition}\label{thm:genrellich}
Let $t\mapsto A(t)$ be an analytic curve of (possibly unbounded) closed operators on a Hilbert space $H$, with a common domain of definition and with compact resolvents. Suppose that, for each $t$, the spectrum of $A(t)$ is a discrete set of positive \emph{real} eigenvalues with finite multiplicities. Suppose further that the eigenvectors of $A(t)$ form a basis for $H$ and that the global resolvent set $\{(t,z) \colon A(t) - z \text{ is invertible\,}\}$ is open. Then the eigenvalues and the eigenvectors of $A(t)$ may be  parametrized real analytically in $t$, locally.
\end{proposition}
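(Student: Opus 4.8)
The plan is to run the classical Riesz--projection localization of the spectrum, reduce the problem near each eigenvalue to a finite-dimensional analytic family of matrices, and then quote the Rellich--Alekseevsky--Kriegl--Losik--Michor parametrization theorem for such families. Fix $t_0$ and an eigenvalue $\lambda_0\in\Spec(A(t_0))$; since the resolvents are compact the spectrum is discrete, so we may choose a small positively oriented circle $\gamma\subset\C$ enclosing $\lambda_0$ and no other point of $\Spec(A(t_0))$. Because the global resolvent set is open, $\gamma\times\{t_0\}$ has a neighborhood contained in it, hence $\gamma\subset\rho(A(t))$ for all $t$ near $t_0$ and $(t,z)\mapsto (z-A(t))^{-1}$ is analytic there. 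Therefore the Riesz projection
\[
P(t)=\frac{1}{2\pi i}\oint_\gamma (z-A(t))^{-1}\,dz
\]
depends analytically on $t$; it is a (finite rank) projection with $P(t)\to P(t_0)$ in operator norm, so $\|P(t)-P(t_0)\|<1$ for $t$ near $t_0$, which forces $\operatorname{rank}P(t)=\operatorname{rank}P(t_0)=:m<\infty$. Write $V(t)=\operatorname{ran}P(t)$; this is an $m$-dimensional $A(t)$-invariant subspace with $\Spec(A(t)|_{V(t)})=\Spec(A(t))\cap\operatorname{int}\gamma$, a finite subset of $\R$.

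Next I would produce an analytic frame of $V(t)$ together with the associated matrix. Fix a basis $e_1,\dots,e_m$ of $V(t_0)$ and set $v_j(t)=P(t)e_j$; these depend analytically on $t$, and since $P(t)|_{V(t_0)}\colon V(t_0)\to V(t)$ is invertible for $t$ near $t_0$ (again because $\|P(t)-P(t_0)\|<1$), the $v_j(t)$ form a basis of $V(t)$. The coordinate functionals relative to this frame are analytic in $t$ (for instance via the inverse of the analytic, invertible Gram matrix of the $v_j(t)$), so the matrix $\mathcal{A}(t)=(\mathcal{A}_{ij}(t))$ defined by $A(t)v_j(t)=\sum_i\mathcal{A}_{ij}(t)v_i(t)$ is an analytic curve of $m\times m$ complex matrices whose spectrum is exactly $\Spec(A(t))\cap\operatorname{int}\gamma\subset\R$. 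Moreover $\mathcal{A}(t)$ is diagonalizable for each $t$: the eigenvectors of $A(t)$ span $H$, so $V(t)$ is spanned by eigenvectors of $A(t)$ and the restriction $A(t)|_{V(t)}$ is semisimple. Thus $\det(\lambda I-\mathcal{A}(t))$ is a monic degree-$m$ polynomial in $\lambda$ with $t$-analytic coefficients and with all roots real for every $t$ near $t_0$.

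Finally I would invoke the finite-dimensional Rellich-type result: for an analytic curve of matrices that is, for each $t$, diagonalizable with real spectrum, both the eigenvalues and an eigenbasis can be chosen real analytically, which is precisely the one-parameter content of \cite{FR,AKL,KM03}. Concretely, in the situation of this paper one does not even need the general statement: $A(t)$ is self-adjoint for the inner product $\langle\,\cdot\,,\cdot\,\rangle_t$ associated with $d\vol_{\theta_t}=e^{(n+1)u_t}d\vol_\theta$, which depends analytically on $t$; restricting it to $V(t)$ and expressing it in the frame $\{v_i(t)\}$ yields a positive-definite Hermitian matrix $G(t)$ depending analytically on $t$ with $G(t)\mathcal{A}(t)=\mathcal{A}(t)^\ast G(t)$. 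Since $G(t)$ is positive definite and analytic, so is $G(t)^{1/2}$, and $B(t):=G(t)^{1/2}\mathcal{A}(t)G(t)^{-1/2}$ is an analytic curve of Hermitian matrices; Rellich's classical theorem then supplies real analytic eigenvalues $\lambda_1(t),\dots,\lambda_m(t)$ and a real analytic orthonormal eigenbasis $w_1(t),\dots,w_m(t)$ of $B(t)$. Pulling back, $G(t)^{-1/2}w_j(t)$ are analytic eigenvectors of $\mathcal{A}(t)$, and $\sum_i \bigl(G(t)^{-1/2}w_j(t)\bigr)_i\,v_i(t)$ are analytic eigenvectors of $A(t)$ with eigenvalues $\lambda_j(t)$. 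Covering a compact $t$-interval by finitely many such neighborhoods and letting $\lambda_0$ range over the (locally finitely many) relevant eigenvalues of $A(t_0)$ assembles these local pieces into the claimed local real analytic parametrization. The one genuinely delicate point is the passage from the a priori output of the general theory — eigenvalues as branches of algebroidal (Puiseux) functions, possibly with branch points where eigenvalues collide — to honest analyticity; this is exactly where the hypotheses that the spectrum is real and that $A(t)$ is diagonalizable are used (through the reduction to a Hermitian family above, where positivity kills the branch points), and it is the heart of the cited perturbation theorems.
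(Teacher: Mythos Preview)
Your argument follows the paper's own sketch almost verbatim: Riesz projection along an analytic contour in the global resolvent set, constancy of rank, an analytic local frame for the range, reduction to an analytic curve of $m\times m$ matrices with real spectrum, and then an appeal to the Rellich--AKLM theory for roots of hyperbolic polynomials. The one place you go beyond the paper is your final paragraph, where instead of merely citing the hyperbolic-polynomial result you exploit the fact that, in the concrete application to $P_t$, there is an analytically varying inner product making the family self-adjoint; conjugating by $G(t)^{1/2}$ to a genuinely Hermitian curve $B(t)$ and invoking Rellich's classical theorem is cleaner and makes the eigenvector parametrization more transparent than the paper's terse last sentence. One small correction there: the inner product for which $P_t=U_t^{-1}\circ\Box_{b,t}\circ U_t$ is self-adjoint is the pullback $\langle\phi,\psi\rangle_t=\langle U_t\phi,U_t\psi\rangle_{L^2(\theta_t)}=\int_M \phi\,\bar\psi\, e^{-(n+1)u_t}\,d\vol_\theta$, not the one with weight $e^{+(n+1)u_t}$ that you wrote; the sign in the exponent is reversed, but your argument goes through unchanged once the correct weight is used.
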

An well-known argument in the literature (see, e.g., Theorem~7.8 in \cite{AKL} or \cite{KM03}) reduces the parametrizations of the eigenvalues of a family of self-adjoint operators to those of the real roots of hyperbolic polynomials. The proof sketched below uses an argument that is almost the same: The only difference is that instead of assuming the self-adjointness, we assume the operators $A(t)$ have ``nice'' spectra, namely, the eigenvalues of $A(t)$ are \textit{real} and they have ``enough'' eigenvectors. These assumptions guarantee that the finite family of the eigenvalues that are enclosed by a certain curve in the global resolvent set are the real roots of an analytically parametrized hyperbolic polynomial. Moreover, the direct sum of the corresponding eigenspaces admits an analytic framing. This implies that the eigenvectors are parametrized analytically, locally in $t$.

Precisely, let $V$ be the common domain of definition of $A(t)$ for all $t$. As in \cite{AKL}, put 
\begin{equation}
\|u\|_t^2 : = \|u\|^2 + \|A(t) u\|^2.
\end{equation}
Then, for each $t$, $(V, \|\cdot \|_t)$ is a Hilbert space and $A(t) \colon V\to H$ is bounded. Moreover, the norms are locally uniformly equivalent in $t$ \cite{AKL}. We equip $V$ with one of the norms, say $\|\cdot \|_{t_0}$. The map $(t,z)\mapsto (A(t) - z)^{-1}$ is analytic for $(t,z)$ in the global resolvent set (which is assumed to be open) since the inversion is analytic in the space $L(V,H)$.

Fix a parameter $s$ and choose a smooth and closed curve $\gamma$ in the resolvent set of $A(s)$ such that
for all $t$ that is close to $s$, there is no eigenvalue of $A(t)$ lie on $\gamma$. This is possible under the assumption that the global resolvent set is open. The curve 
\begin{equation}
t \mapsto P(t,\gamma) : = -\frac{1}{2i} \int_{\gamma} (A(t) - z)^{-1}dz
\end{equation}
is a smooth curve of projections (onto the direct sums of the eigenspaces that correspond to the eigenvalues of $A(t)$ interior to $\gamma$) with finite and constant rank, say $N$. This family
of $N$-dimensional complex vector spaces $t\mapsto P(t,\gamma) (H)$ admits a local analytic frame~$\mathcal{F}$.

The operators $A(t)$ map $P(t,\gamma) (H)$ into itself. In the local analytic frame~$\mathcal{F}$,
they are given by $N\times N$ matrices $M_t$ that are parametrized analytically in $t$. By assumptions, for each $t$, $M_t$ has $N$ real eigenvalues which are precisely the eigenvalues of $A(t)$ that are interior to $\gamma$. Moreover, the eigenvalues are the real roots of the (hyperbolic) characteristic polynomial $P_t$ of $M_t$. Consequently, the eigenvalues of $A(t)$ that are interior to $\gamma$ are parametrized analytically near~$s$. Since the frame of $P(t,\gamma)(H)$ can be chosen locally analytically, the corresponding eigenvectors can also be chosen real analytically, as desired.

\begin{theorem}\label{thm:dif}
Let $(M,\theta)$ be a compact embeddable strictly pseudoconvex pseudohermitian manifold. Let $\theta(t):=e^{u_t}\theta$ be an analytic deformation of $\theta$. Let $\lambda >0$ be an eigenvalue of $\Box_{b}$ with multiplicity $m$. Then there exist $\epsilon >0$, a family of $m$ real-valued analytic functions $\Lambda_{j}$ on $(-\epsilon, \epsilon)$, and a family of smooth functions $\phi_j(t)$ on $M$ such that
\begin{enumerate}[(i)]
\item $\Lambda_j(0) = \lambda$, $1\leq j \leq m$,
\item $\Box_{b,t} \phi_j(t) = \Lambda_{j}(t) \phi_j(t)$.
\end{enumerate}
\end{theorem}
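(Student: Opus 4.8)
The plan is to transfer the whole family $\{\Box_{b,t}\}_t$ to a single fixed Hilbert space, on which it becomes a \emph{self-adjoint} analytic family, and then to run the argument underlying Proposition~\ref{thm:genrellich} \emph{locally} near the positive eigenvalue $\lambda$, where — in contrast with the situation at $0$ — the spectrum is genuinely discrete. Concretely: since $d\vol_{\theta_t} = e^{(n+1)u_t}\,d\vol_\theta$, the multiplication operator $U_t\colon L^2(M,d\vol_{\theta_t}) \to L^2(M,d\vol_{\theta})$, $U_t\phi = e^{(n+1)u_t/2}\phi$, is unitary; I set $H := L^2(M,d\vol_{\theta})$ and $A(t) := U_t\,\Box_{b,t}\,U_t^{-1}$, a nonnegative self-adjoint operator on the fixed space $H$ with $A(0) = \Box_b$. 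Using Proposition~\ref{prop:boxtrans} in the form $\Box_{b,t}f = e^{-u_t}\Box_b f - n\,e^{-u_t}\langle \partial_b u_t , \bar\partial_b f\rangle$, the second-order part of $\Box_{b,t}$ is $e^{-u_t}\Box_b$, so $\domain(\Box_{b,t}) = \{f\colon \Box_b f \in L^2\}$ is independent of $t$; since multiplication by a smooth nowhere-vanishing function preserves this (Folland--Stein type) space, $V := U_t\,\domain(\Box_{b,t}) = \domain(\Box_b)$ is independent of $t$ as well. Equipping $V$ with the graph norm of $\Box_b$, I would then check that $t\mapsto A(t)\in L(V,H)$ is real analytic; this uses that $t\mapsto u_t$ is analytic into $C^\infty(M)$ and that $\Box_b$ and $\bar\partial_b$ are bounded from $V$ into $H$, the latter via $\|\bar\partial_b f\|^2 = \langle \Box_b f,f\rangle$.

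Next I would localize near $\lambda$. Choosing $r\in(0,\lambda)$ smaller than the distance from $\lambda$ to the remaining spectrum of $A(0)$, let $\gamma$ be the circle of radius $r$ centered at $\lambda$; then $\gamma$ avoids $\Spec(A(0))$ and does not enclose $0$. The global resolvent set $\{(t,z)\colon A(t)-z \text{ is invertible } V\to H\}$ is open, because invertibility is an open condition and $(t,z)\mapsto A(t)-z$ is continuous into $L(V,H)$; it contains the compact set $\gamma\times\{0\}$, hence it contains $\gamma\times(-\epsilon,\epsilon)$ for some $\epsilon>0$, and the Riesz projections
\begin{equation}
P(t) := -\frac{1}{2\pi i}\oint_\gamma (A(t)-z)^{-1}\,dz, \qquad |t|<\epsilon,
\end{equation}
form an analytic family. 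Choosing $\gamma$ symmetric about the real axis makes each $P(t)$ self-adjoint; each $P(t)$ commutes with $A(t)$ and has finite rank, and being locally constant (the rank of a norm-continuous family of projections) it equals $\operatorname{rank}P(0)=m$ for $|t|<\epsilon$ after shrinking $\epsilon$ — equivalently, by the continuity of the eigenvalue functionals (Theorem~\ref{thm:main}), $\gamma$ encloses exactly the $m$ eigenvalues of $A(t)$ bifurcating from $\lambda$, counted with multiplicity.

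Finally I would carry out the finite-dimensional reduction: applying $P(t)$ to a fixed orthonormal basis of $P(0)H$ and Gram--Schmidt gives, for $|t|$ small, a real-analytic orthonormal frame $v_1(t),\dots,v_m(t)$ of $P(t)H$; since $A(t)$ preserves $P(t)H$, in this frame it is the Hermitian, real-analytically varying $m\times m$ matrix $M(t)=(\langle A(t)v_j(t),v_i(t)\rangle)_{i,j}$, and Rellich's theorem for analytic families of Hermitian matrices (the finite-dimensional heart of Proposition~\ref{thm:genrellich}, cf. \cite{FR}) yields real-analytic eigenvalues $\Lambda_1,\dots,\Lambda_m$ on $(-\epsilon,\epsilon)$ with $\Lambda_j(0)=\lambda$ together with a real-analytic eigenframe, hence eigenvectors $\Psi_j(t)\in P(t)H$ with $A(t)\Psi_j(t)=\Lambda_j(t)\Psi_j(t)$. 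Pulling back, $\phi_j(t):=U_t^{-1}\Psi_j(t)=e^{-(n+1)u_t/2}\Psi_j(t)$ satisfies $\Box_{b,t}\phi_j(t)=\Lambda_j(t)\phi_j(t)$; and since $\Lambda_j(t)>0$ for $|t|$ small, each $\phi_j(t)$ is an eigenfunction for a positive eigenvalue, hence smooth on $M$ by the regularity recalled in the introduction. This gives (i) and (ii).

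The main obstacle — and the precise reason Proposition~\ref{thm:genrellich} cannot be invoked verbatim — is that $\Box_{b,t}$ has \emph{noncompact} resolvent, since $\ker\Box_{b,t}$ is the infinite-dimensional space of CR functions; moreover one cannot simply restrict to $(\ker\Box_{b,t})^\perp$, because that orthogonal complement moves with $t$ through the inner product $\langle\cdot,\cdot\rangle_{\theta_t}$. The device that resolves this is exactly the localization above: near the \emph{positive} eigenvalue $\lambda$ the spectrum is discrete, so the contour $\gamma$ can be drawn around $\lambda$ alone, $P(t,\gamma)$ is finite rank, and the finite-dimensional reduction proceeds as in the compact-resolvent case, with the role of compactness played only by the local discreteness of the spectrum together with Theorem~\ref{thm:main}.
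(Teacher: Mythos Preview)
Your proof is correct and takes a genuinely different route from the paper's. The paper restricts $\Box_{b,t}$ to the orthogonal complement $\mathcal{H}_t^{\perp}$ of the CR functions (where the resolvent \emph{is} compact), then conjugates by the \emph{non-unitary} map $\varphi \mapsto e^{-(n+1)u_t}\varphi$ to transport the family to the fixed space $\mathcal{H}^{\perp}:=\mathcal{H}_0^{\perp}$; the resulting family $P_t$ has compact resolvent and real spectrum but is no longer self-adjoint, which is exactly why Proposition~\ref{thm:genrellich} is stated in its non-self-adjoint form. You instead conjugate by the \emph{unitary} $\phi \mapsto e^{(n+1)u_t/2}\phi$ on the full $L^2$, preserving self-adjointness at the cost of the compact resolvent, and then sidestep the infinite-dimensional kernel by drawing the Riesz contour $\gamma$ around $\lambda$ alone, away from $0$. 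Your approach buys you a finite-dimensional reduction to honestly Hermitian matrices, so the classical Rellich theorem applies directly and the adapted Proposition~\ref{thm:genrellich} is not needed; the paper's approach buys a single compact-resolvent framework that handles the whole positive spectrum uniformly, at the price of the mild generalization of Rellich to non-self-adjoint operators with real spectrum and enough eigenvectors. Both devices are designed to solve the same obstruction you correctly identify in your last paragraph: the $t$-dependence of $(\ker\Box_{b,t})^{\perp}$.
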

\begin{proof}
Let $\Box_{b,t}$ be the Kohn Laplacian associated to $\theta(t)$. For each
$t$, $\Box_{b,t}$ is a self-adjoint operator in $L^2\left(M,d\vol_{\theta(t)}\right)$. Moreover, $\ker \Box_{b,t} = \mathcal{H}$ is the subspace of CR functions. Note that $\mathcal{H}$ does not depend on $t$ and is closed in $L^2\left(M,d\vol_{\theta(t)}\right)$ for every $t$. We denote by $\Hperp_t$ the orthogonal complement of $\mathcal{H}$ in $L^2\left(M,d\vol_{\theta(t)}\right)$:
\begin{equation}
L^2(M,\theta_t) = \mathcal{H} \oplus \Hperp_t.
\end{equation}
Note that this orthogonal decomposition depends on $t$. For each $t$, the restriction 
\[
\Box_{b,t}\bigl|_{\Hperp_t} \colon {\Hperp_t} \to {\Hperp_t}
\]
is an unbounded self-adjoint operator with compact resolvent (see, eg., \cite{LSW}). The spectrum of $\Box_{b,t}|_{\Hperp_t}$ coincides with the spectrum of $\Box_{b,t}$ with zero removed.

Consider the analytic family of  operators
\begin{align}
U_t \colon \mathcal{H}^{\perp} \to \mathcal{H}_t^{\perp},
\quad
U_t(\varphi)= e^{-(n+1)u_t} \varphi.
\end{align}
Define $P_t \colon {\Hperp_t} \to {\Hperp_t}$ by
\begin{equation}
P_t = U_t^{-1} \circ \left(\Box_{b,t}|_{\Hperp_t}\right) \circ U_t.
\end{equation}
Then $P_t$ is a family of (not necessary self-adjoint) operators with compact resolvents. Moreover, for each $t$, the spectrum of $P_t$ coincides with the spectrum of $\Box_{b,t}|_{\Hperp_t}$. In particular, $P_t$ has a discrete spectrum consisting of real eigenvalues of finite multiplicities.

Observe that $P_t$ is analytic in $t$. Indeed, by Proposition~\ref{prop:boxtrans}, we have
\begin{equation}
e^{u_t}P_t (\varphi)
=
\Box_b \varphi + \langle \partial_b u_t , \bar{\partial}_b \varphi \rangle
+(n+1) \langle \partial_b \varphi , \bar{\partial}_b u_t \rangle
-(n+1)\left(\Box_b u_t - (n-1)|\partial_b u_t|^2\right) \varphi,
\end{equation}
and the right-hand side depends analytically in $t$.

By the continuity of the eigenvalues proved in Theorem~\ref{thm:main}, the global resolvent set must be open. Therefore, we can apply Proposition~\ref{thm:genrellich} above to conclude that the eigenvalues of $P_t$ can be parametrized analytically on $t$, i.e., (i) holds. The conclusion (ii) also follows at once.
\end{proof}
\subsection*{Proof of Theorem~\ref{thm:1.2}}
Let $m$ be the dimension of $E_k(\theta)$, the eigenspace of $\Box_b$ with eigenvalue $\lambda_k(0)$. We apply Theorem~\ref{thm:dif} to obtain $m$ real-analytic functions $\Lambda_l(t)$ and $m$ analytic families of smooth functions $\phi_l(t) \colon M \to \mathbb{C}$, $|t|<\epsilon$ such that
\begin{equation}\label{e:37}
\Box_{b,t} \phi_l(t) = \Lambda_l(t) \phi_l(t), \quad l = 1,2,\dots , m.
\end{equation}
By the continuity of the eigenvalues proved in Theorem~\ref{thm:main}, we deduce that there are two indices $p$ and $q$ such that
\begin{equation}\label{e:eg}
\lambda_k(\theta(t))
=
\begin{cases}
\Lambda_p(t) ,\quad t < 0, \\
\Lambda_q(t) ,\quad t > 0.
\end{cases}
\end{equation}
From this, the existence of the left and right derivatives of $t\mapsto \lambda_k(\theta(t))$ follows immediately.

To prove (ii), we differentiate \eqref{e:37} to obtain
\begin{equation}
\Box_b ' \phi_l = - \Box_b \phi_l' + \Lambda_l'(0) \phi_l + \lambda_k(\theta) \phi_l'.
\end{equation}
By integration by parts, for all $\psi \in E_k(\theta)$,
\begin{align}
\langle \Box_b ' \phi_l , \psi \rangle_{L^{2}(\theta)}
& = 
-\int_M \bar{\psi} \, \Box_{b} \phi_l ' + \Lambda_l'(0) \int_M \phi_l\,\bar{\psi} + \lambda_k(0) \int_M \phi_l'\, \bar{\psi} \notag \\
& = 
-\int_M \phi_l ' \, \overline{\Box_{b} \psi} + \Lambda_l'(0) \int_M \phi_l\, \bar{\psi} + \lambda_k(0) \int_M \phi_l'\, \bar{\psi} \notag \\
& = \,
\left\langle \Lambda_l'(0) \phi_l , \psi \right \rangle_{L^{2}(\theta)}.
\end{align}
Therefore, $\Pi_k \circ \Box_b '|_{E_k}$ is diagonalizable with eigenvalues $\{\Lambda_l'(0)\}$. Here, $\Pi_k$ is the orthogonal projection onto $E_k$. We claim that the eigenvalues of $\Pi_k \circ \Box_b'|_{E_k}$ are those of $Q_f|_{E_k}$ and hence (ii) follows. Indeed, by Proposition~\ref{prop:boxtrans},
\begin{equation}
\Box_{b}' \varphi := -f\, \Box_b \varphi - n\langle \partial_b f , \bar{\partial}_b \varphi \rangle,
\end{equation}
where $f = \partial u_t/\partial t |_{t=0}$. On the other hand,
\begin{align}
\int_M \bar{\psi} \langle \bar{\partial}_b \varphi , \partial_b f\rangle
& =
\int_M f\bar{\partial}^{\ast}_b (\bar{\psi}\bar{\partial}_b \varphi) d\vol_\theta \notag \\
& = 
\int_M f\bar{\psi} (\bar{\partial}_b ^{\ast}\bar{\partial}_b \varphi) d\vol_{\theta}
-
\int_M f \langle \bar{\partial}_b \varphi , \partial_b \bar{\psi} \rangle \, d\vol_{\theta} \notag \\
& =
\int_M f\left(\bar{\psi} \Box_b \varphi - \langle \bar{\partial}_b \varphi , \partial_b \bar{\psi} \rangle \right) \, d\vol_{\theta}.
\end{align}
Therefore,
\begin{align}
\int_M \bar{\psi}\, \Box_b ' \varphi \, \, d\vol_{\theta}
=
-\int_M f\left((n+1)\bar{\psi} \Box_b \varphi - n\langle \bar{\partial}_b \varphi , \partial_b \bar{\psi} \rangle \right) \, d\vol_{\theta} 
=
Q_f(\varphi, \psi).
\end{align}
Hence, the claim and (ii) follow.

To prove (iii), we suppose that $\lambda_k(\theta) > \lambda_{k-1}(\theta)$. Since $\Lambda_l(0) = \lambda_{k}(\theta)$ and $\lambda_{k}(\theta(t))$ is continuous in $t$, it must hold that $\Lambda_l(t) > \lambda_{k-1}(\theta(t))$ for $t$ sufficiently small. This implies that for such $t$, 
\begin{equation}
\lambda_k(\theta(t)) = \min \{\Lambda_1(t), \Lambda_2(t), \dots , \Lambda_m(t)\}.
\end{equation}
Hence
\begin{equation}
\frac{d}{dt} \lambda_k(\theta(t))\bigl|_{t=0^{-}} 
=
\max \{\Lambda_1'(0), \Lambda_2'(0), \dots , \Lambda_m'(0) \}
\end{equation}
and 
\begin{equation}
\frac{d}{dt} \lambda_k(\theta(t))\bigl|_{t=0^{+}} 
=
\min \{\Lambda_1'(0), \Lambda_2'(0), \dots , \Lambda_m'(0) \}
\end{equation}
which prove (iii).

Part (iv), i.e., the case $\lambda_k(\theta) < \lambda_{k+1}(\theta)$, can be proved similarly. We omit the details.
\section{Proofs of Theorem~\ref{thm:main2} and Corollary~\ref{cor:mul}} \label{sec:proofs}
\subsection{Proof of Theorem~\ref{thm:main2}}
We denote by $\mathcal{A}_{0}(M,\theta)$ the set of real-valued smooth functions $f$ with zero mean on $M$: 
\begin{equation}
\mathcal{A}_{0}(M,\theta)= \bigg\{f\in C^\infty (M; \mathbb{R}): \int_{M} f \; d\vol_{\theta}=0 \bigg\}.
\end{equation}
We need the following result.
\begin{lemma}\label{thm:6.2} Let $M$ be a compact embeddable strictly pseudoconvex CR manifold and $\theta$ a pseudohermitian structure on $M$. Then the following statements hold.
\begin{enumerate}[(i)]
\item If $\theta$ is critical for the $\lambda_{k}$-functional restricted to $\mathcal P^0_{+}$, then for every $f \in \mathcal{A}_{0}(M,\theta)$, the restriction $Q_{f}|_{E_k(\theta)}\colon E_k(\theta) \rightarrow \mathbb{C}$ is indefinite.
\item Assume that either $k=1$ or $\lambda_{k-1}(\theta) < \lambda_{k+1}(\theta)$. Then $\theta$ is critical for the $\lambda_{k}$-functional restricted to $\mathcal P^0_{+}$ if and only if the Hermitian form $Q_{f}|_{E_k(\theta)}$ is indefinite for every $f \in \mathcal{A}_{0}(M,\theta)$.
\end{enumerate}
\end{lemma}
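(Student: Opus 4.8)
The plan is to reduce the criticality condition to a linear-algebraic statement about the family of Hermitian forms $\{Q_f|_{E_k}: f\in\mathcal{A}_0(M,\theta)\}$ via Theorem~\ref{thm:1.2}, the bridge being a dictionary between volume-normalized analytic deformations through $\theta$ and functions in $\mathcal{A}_0(M,\theta)$. On the one hand, if $\theta(t)=e^{u_t}\theta\subset\mathcal{P}_+^0$ is an analytic deformation with $\theta(0)=\theta$, then differentiating the constraint $\int_M e^{(n+1)u_t}\,d\vol_\theta\equiv 1$ at $t=0$, and using $u_0=0$ together with $\vol_\theta=1$, gives $\int_M f\,d\vol_\theta=0$ where $f:=\partial_t u_t|_{t=0}$; that is, $f\in\mathcal{A}_0(M,\theta)$. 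On the other hand, given any $f\in\mathcal{A}_0(M,\theta)$, put $u_t:=tf-\tfrac{1}{n+1}\log\!\big(\int_M e^{(n+1)tf}\,d\vol_\theta\big)$; since $t\mapsto\int_M e^{(n+1)tf}\,d\vol_\theta$ is a positive, entire (hence real-analytic) function of $t$, the curve $t\mapsto u_t$ is real-analytic into $C^\infty(M;\mathbb{R})$, $\theta(t):=e^{u_t}\theta$ lies in $\mathcal{P}_+^0$ for every $t$, and $\partial_t u_t|_{t=0}=f$ (the correction term has vanishing first derivative at $0$ because $\int_M f\,d\vol_\theta=0$). Thus every $f\in\mathcal{A}_0(M,\theta)$ is the first-order datum of an admissible deformation. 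Since $f$ is real-valued, $Q_f|_{E_k}$ is a Hermitian form with $\dim E_k$ real eigenvalues; by Theorem~\ref{thm:1.2}(ii) the two one-sided derivatives $\frac{d}{dt}\lambda_k(\theta(t))\big|_{t=0^-}$ and $\frac{d}{dt}\lambda_k(\theta(t))\big|_{t=0^+}$ of the deformation attached to $f$ are among these eigenvalues, and by Theorem~\ref{thm:1.2}(iii)--(iv), under a gap assumption on $\lambda_k$, they are precisely the largest and the smallest eigenvalue of $Q_f|_{E_k}$, in one order or the other.

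For part~(i) I would argue by contraposition. Suppose $Q_f|_{E_k}$ is not indefinite for some $f\in\mathcal{A}_0(M,\theta)$, i.e.\ it is either positive definite or negative definite. Take the deformation $\theta(t)\subset\mathcal{P}_+^0$ attached to $f$ constructed above. By Theorem~\ref{thm:1.2}(ii), both $\frac{d}{dt}\lambda_k(\theta(t))\big|_{t=0^\pm}$ are eigenvalues of $Q_f|_{E_k}$, hence both have the same strict sign, so their product is $>0$ and $\theta$ fails to be critical for the $\lambda_k$-functional on $\mathcal{P}_+^0$. Contrapositively, if $\theta$ is critical then $Q_f|_{E_k}$ is indefinite — understood as neither positive definite nor negative definite — for every $f\in\mathcal{A}_0(M,\theta)$, which is (i). No spectral-gap hypothesis is used here.

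For part~(ii), the ``only if'' direction is exactly part~(i). For the converse, assume $Q_f|_{E_k}$ is indefinite for every $f\in\mathcal{A}_0(M,\theta)$, and let $\theta(t)=e^{u_t}\theta\subset\mathcal{P}_+^0$ be an arbitrary analytic deformation with $\theta(0)=\theta$; put $f:=\partial_t u_t|_{t=0}\in\mathcal{A}_0(M,\theta)$. From $\lambda_{k-1}(\theta)\le\lambda_k(\theta)\le\lambda_{k+1}(\theta)$, the hypothesis ($k=1$ or $\lambda_{k-1}(\theta)<\lambda_{k+1}(\theta)$) forces at least one of $\lambda_{k-1}(\theta)<\lambda_k(\theta)$ (or $k=1$) and $\lambda_k(\theta)<\lambda_{k+1}(\theta)$ to hold; in the first case Theorem~\ref{thm:1.2}(iii), in the second Theorem~\ref{thm:1.2}(iv), exhibits $\frac{d}{dt}\lambda_k(\theta(t))\big|_{t=0^-}$ and $\frac{d}{dt}\lambda_k(\theta(t))\big|_{t=0^+}$ as the largest and the smallest eigenvalue of $Q_f|_{E_k}$, in one order or the other. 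Since $Q_f|_{E_k}$ is indefinite, its largest eigenvalue is $\ge 0$ and its smallest is $\le 0$, so the product of the one-sided derivatives is $\le 0$. As the deformation was arbitrary, $\theta$ is critical.

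I expect the conceptual content to be light — everything rests on the dictionary above and Theorem~\ref{thm:1.2} — and the care needed to be routine: checking that the normalized curve $u_t$ is analytic in the sense required by Theorem~\ref{thm:1.2} (this reduces to analyticity and positivity of $t\mapsto\int_M e^{(n+1)tf}\,d\vol_\theta$), matching the gap hypotheses to cases (iii) and (iv) of Theorem~\ref{thm:1.2} while tracking which one-sided derivative is the maximum and which the minimum, and fixing the convention that ``indefinite'' means ``neither positive nor negative definite''. The one genuine subtlety is that, in part~(i), without a spectral gap Theorem~\ref{thm:1.2}(ii) does not identify \emph{which} eigenvalues of $Q_f|_{E_k}$ the one-sided derivatives equal, so the contraposition can eliminate only the two definite cases — which is precisely why (i) must be phrased with the weak notion of indefiniteness rather than the strict one.
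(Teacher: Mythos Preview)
Your proposal is correct and follows essentially the same approach as the paper: the paper constructs the identical normalized deformation $u_t = tf - \tfrac{1}{n+1}\ln\bigl(\vol(e^{tf}\theta)\bigr)$ to realize a given $f\in\mathcal{A}_0(M,\theta)$, differentiates the volume constraint to show the first-order datum of any admissible deformation lies in $\mathcal{A}_0(M,\theta)$, and then invokes Theorem~\ref{thm:1.2}(ii) for part~(i) and Theorem~\ref{thm:1.2}(iii)--(iv) for the converse in part~(ii). Your contrapositive phrasing of~(i) and your explicit case split on the gap hypothesis in~(ii) are more detailed than the paper's terse treatment, but the content is the same.
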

\begin{proof}
(i) Let $f \in \mathcal{A}_{0}(M,\theta)$. By direct calculations (cf. \cite{ADS2}, page 124), the pseudoconformal deformation of $\theta$ given by
\begin{equation}
\theta(t)=\left[\frac{\vol(\theta)}{\vol(e^{tf}\theta)}\right]^\frac{1}{n+1}e^{tf}\theta=e^{u_t}\theta, \quad t\in \mathbb{R},
\end{equation}
belongs to $\mathcal P^0_{+}$ and depends analytically on $t$ with $\frac{d}{dt} \theta(t)\big|_{t=0}= f \theta$, and
\begin{equation}
u_t=tf-\frac{1}{n+1}\ln(\vol(e^{tf}\,\theta)).
\end{equation}
Moreover,
\begin{equation}
\frac{d}{dt} u_t\big|_{t=0}=f.
\end{equation}
Now assuming $\theta$ is critical for $\lambda_k$ restricted to $\mathcal{P}^0_{+}$, using Theorem~\ref{thm:1.2}, we obtain that $Q_{f}|_{E_k}$ has both nonnegative and nonpositive eigenvalues and hence (i) follows.

(ii) Let $\theta(t)=e^{u_t}\theta\in \mathcal P^0_{+}$ be an analytic deformation of $\theta$. Since $\vol(\theta(t))$ is constant with respect to $t$, the function $f=\frac{d}{dt} u_t\big|_{t=0}\in \mathcal{A}_{0}(M,\theta).$ Indeed,
\begin{equation}
\frac{d}{dt}{\vol}(\theta(t))\big|_{t=0}=\frac{d}{dt} \int_M e^{(n+1)u_t}\vol_{\theta}\big|_{t=0}=(n+1)\int_M f\vol_{\theta}.
\end{equation}
Thus (ii) follows in view of Theorem \ref{thm:1.2}. 
\end{proof}
\begin{lemma}\label{lem:3}
The Hermitian form $Q_{f}|_{E_k}$ is indefinite on $E_{k}(\theta)$ for all $f\in \mathcal{A}_{0}(M,\theta)$ if and only if there exists a finite family $\{\psi_1, \psi_2, \dots , \psi_d\} \subset E_{k}(\theta)$ such that
\begin{equation}\label{e:l1a}
\sum_{j=1}^{d} L(\psi_j) = \sum_{j=1}^{d} \left((n+1)\lambda_k|\psi_j|^2 - n |\bar{\partial}_b \psi_j|^2 \right) = C
\end{equation}
for some constant $C >0$.
\end{lemma}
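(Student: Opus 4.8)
The plan is to recognize this as a finite-dimensional duality/convexity statement about Hermitian forms on the fixed finite-dimensional space $E_k(\theta)$, exactly parallel to the classical argument of El Soufi--Ilias and Ahmadi--Saadat-Pour in the Riemannian case. Consider the real vector space $\mathcal{S}$ of Hermitian forms on $E_k(\theta)$, which can be identified with the space of Hermitian matrices of size $m=\dim E_k$. The map $f \mapsto Q_f|_{E_k}$ is a real-linear map from $C^\infty(M;\mathbb{R})$ into $\mathcal{S}$; denote its image restricted to $\mathcal{A}_0(M,\theta)$ by $\mathcal{Q} \subset \mathcal{S}$, a linear subspace. The statement to prove is: $\mathcal{Q}$ contains no positive-definite form and no negative-definite form (``$Q_f|_{E_k}$ is indefinite for all $f\in\mathcal{A}_0$'') if and only if there is a finite family $\psi_1,\dots,\psi_d \in E_k$ with $\sum_j L(\psi_j) = C > 0$ constant.

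The key tool is a Hahn--Banach / separation argument on the finite-dimensional space $\mathcal{S}$. First I would compute, for $\psi\in E_k$, the value $Q_f(\psi,\psi) = -\int_M f\, L(\psi)\, d\vol_\theta$ where $L(\psi) := L(\psi,\psi) = (n+1)\lambda_k|\psi|^2 - n|\bar\partial_b\psi|^2$; note that $L(\psi,\psi)$ is a real-valued function on $M$. Thus to each vector $\psi \in E_k$ (equivalently to each rank-one positive semidefinite form $\psi\otimes\bar\psi$) one associates the function $L(\psi) \in C^\infty(M;\mathbb{R})$, and extending by linearity, to each positive semidefinite $B = \sum_j \psi_j\otimes\bar\psi_j \in \mathcal{S}$ one associates $\ell(B) := \sum_j L(\psi_j)$. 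The condition $\sum_j L(\psi_j) = C$ constant says precisely that $\ell(B)$ is $\int_M f\,\ell(B)\,d\vol_\theta = 0$-annihilated by all $f\in\mathcal{A}_0$, i.e. $\ell(B)$ is orthogonal to $\mathcal{A}_0$ in $L^2(d\vol_\theta)$, which (since $\mathcal{A}_0$ is the orthogonal complement of constants) means $\ell(B)$ is constant. Then the proof splits:

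For the ``if'' direction, suppose such a family exists, so there is a positive semidefinite $B_0 \in \mathcal{S}$ (of rank $d$) with $\ell(B_0) = C > 0$ constant. For any $f\in\mathcal{A}_0$, $\mathrm{tr}(Q_f|_{E_k}\cdot B_0) = -\int_M f\,\ell(B_0)\,d\vol_\theta = -C\int_M f\,d\vol_\theta = 0$. If $Q_f|_{E_k}$ were positive definite (or negative definite), then since $B_0$ is nonzero positive semidefinite the trace pairing $\mathrm{tr}(Q_f|_{E_k}\cdot B_0)$ would be strictly positive (resp. negative), a contradiction; hence $Q_f|_{E_k}$ is indefinite. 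For the ``only if'' direction, assume no positive definite and no negative definite form lies in $\mathcal{Q}$. The set $\mathcal{C}^+$ of positive definite Hermitian forms is an open convex cone in $\mathcal{S}$, and $\mathcal{Q}$ is a linear subspace disjoint from it, so by the separating hyperplane theorem there is a nonzero linear functional on $\mathcal{S}$, represented via the trace pairing by some Hermitian $B\in\mathcal{S}$, with $\mathrm{tr}(A\cdot B)\ge 0$ for all $A\in\mathcal{C}^+$ and $\mathrm{tr}(A\cdot B) = 0$ for all $A\in\mathcal{Q}$. The first condition forces $B$ to be positive semidefinite (a standard fact: the dual cone of the PSD cone is itself). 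Writing $B = \sum_{j=1}^d \psi_j\otimes\bar\psi_j$ with $\psi_j\in E_k$, the second condition reads $\int_M f\,\sum_j L(\psi_j)\,d\vol_\theta = 0$ for all $f\in\mathcal{A}_0$, hence $\sum_j L(\psi_j)$ is a constant $C$. It remains to check $C > 0$: integrating, $C\,\vol(\theta) = \int_M \sum_j L(\psi_j)\,d\vol_\theta = \sum_j\big((n+1)\lambda_k\|\psi_j\|^2 - n\|\bar\partial_b\psi_j\|^2\big) = \sum_j\big((n+1)\lambda_k - n\lambda_k\big)\|\psi_j\|^2 = \lambda_k\sum_j\|\psi_j\|^2 > 0$, using $\|\bar\partial_b\psi_j\|^2 = \langle\Box_b\psi_j,\psi_j\rangle = \lambda_k\|\psi_j\|^2$ and that $B\ne 0$ so not all $\psi_j$ vanish.

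The main obstacle, and the point requiring the most care, is bookkeeping the identification of the real dual of the complex Hermitian-forms space with itself under the (real) trace pairing, together with the fact that the PSD cone is self-dual under this pairing and that every PSD Hermitian form is a sum of rank-one forms $\psi\otimes\bar\psi$ — all standard linear algebra but needing to be stated cleanly so that the separation argument produces genuine eigenfunctions $\psi_j\in E_k$ rather than an abstract form. One should also note explicitly that $E_k$ is finite-dimensional (so $\mathcal{S}$ is finite-dimensional and the separation theorem applies) and that $L(\psi,\psi)$ is indeed real-valued and depends on $\psi$ quadratically (so the polarization $L(\psi,\eta)$ is the associated Hermitian form), which was already observed in the discussion preceding Theorem~\ref{thm:1.2}. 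With these in hand the argument is a direct, essentially formal, separation-of-convex-sets proof.
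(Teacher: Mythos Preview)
Your proof is correct and uses the same underlying idea as the paper (a Hahn--Banach separation argument), but you run it in a different space: the paper works in $L^2(M,d\vol_\theta)$, defining the convex cone $\mathcal{C}_k = \{\sum_{j\in J} L(v_j) : v_j \in E_k,\ J\ \text{finite}\}$ of functions and separating it from the constant function $1$, whereas you work in the finite-dimensional real vector space $\mathcal{S}$ of Hermitian forms on $E_k(\theta)$ and separate the linear subspace $\mathcal{Q} = \{Q_f|_{E_k} : f \in \mathcal{A}_0\}$ from the open cone of positive definite forms, invoking self-duality of the PSD cone under the trace pairing. Your route has the advantage that everything happens in finite dimensions, so the separation theorem is elementary and there is no need to argue (as the paper implicitly must, via the reference to \cite{ADS2}) that the separating functional in $L^2$ can be taken smooth. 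The paper's route is slightly more direct in that it produces the family $\{\psi_j\}$ immediately as a point of $\mathcal{C}_k$, without the intermediate step of diagonalizing a PSD matrix. Both arguments establish $C>0$ the same way, by integrating \eqref{e:l1a} and using $\|\bar\partial_b\psi_j\|^2 = \lambda_k\|\psi_j\|^2$.
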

\begin{proof}
We use an argument that is by now standard (cf. \cite{ADS2}). First, assume that \eqref{e:l1} holds, then for any $f\in \mathcal{A}_{0}(M,\theta)$, it holds that
\begin{equation}
\sum_{j=1}^{d} Q_{f}(\psi_j,\psi_j) = - \sum_{j=1}^d\int_M fL(\psi_j)\, d\vol_\theta = - \int_M Cf\, d\vol_\theta = 0.
\end{equation}
Therefore, $Q_{f}|_{E_k(\theta)}$ must be indefinite.

Conversely, assume that $Q_{f}|_{E_k}$ is indefinite on $E_k(\theta)$ for all $f\in \mathcal{A}_{0}(M,\theta)$. For each $k$, let $\mathcal{C}_k$ be the convex set of $L^2(M,d\vol_{\theta})$ defined as follows.
\begin{equation}\label{e:ck}
\mathcal{C}_k:=
\left\{
\sum_{j\in J}L(v_j)
\colon
v_j \in E_k(\theta) , J \subset \mathbb{N}, \ J \ \text{finite}
\right\}.
\end{equation}
Using a stardard argument based on the classical separation theorem (cf. \cite{ADS2}), we can show that the constant $1$ belongs to $\mathcal{C}_k$. Indeed, if $1\not\in \mathcal{C}_k$, then we can find a smooth real-valued function $h$ such that
\begin{equation}
\int_M |h|^2 \, d\vol_\theta > 0
\end{equation}
and
\begin{equation}
\int_M h w \, d\vol_\theta \leq 0
\end{equation}
for all $w \in \mathcal{C}_k$. Let $f = h - h_0$, where $h_0>0$ is the average of $h$ on $M$. Then $f\in \mathcal{A}_0(M,\theta)$. For all $\psi \in E_k(\theta)$ we have, since $L(\psi) \in \mathcal{C}_k$,
\begin{align}
Q_{f}(\psi,\psi)
& = 
- \int_M L(\psi) (h-h_0) \, d\vol_\theta \notag \\
& = 
- \int _M h L(\psi) \, d\vol_\theta + h_0 \int _M \left((n+1)\lambda_k |\psi|^2 - n|\bar{\partial}_b \psi|^2\right) \, d\vol_\theta \notag \\
& \geq
h_0 \int _M \left((n+1)\lambda_k |\psi|^2 - n|\bar{\partial}_b \psi|^2\right) \, d\vol_\theta \notag \\
& =
h_0 (n+1)\int _M \bar{\psi} \Box_{b} \psi \, d\vol_{\theta}- h_0 n \int_M |\bar{\partial}_b \psi|^2 \, d\vol_\theta \notag \\
& =
h_0 \int _M |\bar{\partial}_b \psi|^2 \, d\vol_\theta.
\end{align}
This contradicts the assumption that $Q_{f}$ is indefinite on $E_{\theta}$ since the last integral is positive. Hence $1\in \mathcal{C}_k$ and thus there exists a family of functions $\{\psi_1, \psi_2, \dots , \psi_d\}$ that satisfies \eqref{e:l1}. Moreover, integrating \eqref{e:l1} and using integration by parts, we see that
\begin{equation}
C\vol(M,\theta) = \sum_{j=1}^{d} \int_M |\bar{\partial}_b \psi|^2 > 0,
\end{equation}
and hence $C>0$ as desired.
\end{proof}

Theorem \ref{thm:main2} follows immediately from the two lemmas above.

\begin{proof}[Proof of Corollary~\ref{cor:mul}]
Let a group $G$ act transitively on $M$ by pseudohermitian diffeomorphisms: $g^{\ast} \theta = \theta$ for each $g\in G$. Let $\{\psi_{1}, \psi_{2}, \dots , \psi_{d}\}$ be an orthonormal basis for $E_k$, then, for each $g\in G$, $\{g \cdot \psi_{j}: = \psi \circ g^{-1}\}$ is also an orthonormal basis and thus
\begin{equation}
g \cdot \psi_j = \sum_{k=1}^d a_{jk} \psi_k
\end{equation}
for some $d\times d$-unitary matrix $[a_{jk}]$. This implies that 
\begin{equation}
\Psi: = \sum_{j=1}^{d} |\psi_j|^2
\end{equation}
is $G$-invariant, and hence constant. By the same reason, $ \sum_{j=1}^{d} |\bar{\partial}_b\psi_j|^2$ is also a
constant. The proof then follows from Theorem~\ref{thm:main2}.
\end{proof}
\begin{corollary}\label{cor:mul2}
Suppose that $\theta$ is critical for the $\lambda_{k}$-functional. Then either $\lambda_{k}$ is a multiple eigenvalue or there exists a nontrivial eigenfunction $\psi$ such that $L(\psi)$ is a constant.
\end{corollary}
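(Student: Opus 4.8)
The plan is to read this off directly from Theorem~\ref{thm:main2}. Assume $\theta$ is critical for the $\lambda_k$-functional (restricted, as in the definition, to $\mathcal{P}^0_+$) and suppose that $\lambda_k$ is \emph{not} a multiple eigenvalue, so that $\dim_{\mathbb{C}} E_k(\theta) = 1$. Under this hypothesis I must exhibit a single nontrivial $\lambda_k$-eigenfunction $\psi$ with $L(\psi)$ constant, and the point is that the finite family provided by Theorem~\ref{thm:main2} collapses to scalar multiples of one eigenfunction.

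First I would invoke Theorem~\ref{thm:main2} to obtain a finite family $\psi_1,\dots,\psi_d \in E_k(\theta)$ with $\sum_{j=1}^d L(\psi_j) = C$ for a constant $C$. The extra piece of information I want to record — it is already contained in the proof of Lemma~\ref{lem:3} — is that $C>0$. This follows by integrating \eqref{e:l1} and using $\int_M |\bar{\partial}_b\psi_j|^2\,d\vol_\theta = \langle \Box_b\psi_j,\psi_j\rangle = \lambda_k\int_M|\psi_j|^2\,d\vol_\theta$, which gives $C\,\vol(M,\theta) = \lambda_k\sum_j \|\psi_j\|^2$; in particular the family cannot consist entirely of zero functions.

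Next, since $\dim E_k(\theta)=1$, I would fix any nonzero $\psi \in E_k(\theta)$ and write $\psi_j = c_j\psi$ with $c_j \in \mathbb{C}$. Because $L$ restricted to $E_k(\theta)$ is a Hermitian form (the remark preceding Theorem~\ref{thm:1.2}), it is quadratic in its argument, $L(c_j\psi) = |c_j|^2 L(\psi)$, and hence $C = \big(\sum_j |c_j|^2\big) L(\psi)$. Since $C>0$ forces $\sum_j|c_j|^2>0$, I conclude $L(\psi) = C\big/\sum_j|c_j|^2$, a positive constant, which is exactly the alternative asserted in the corollary.

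I do not expect any real obstacle here: the statement is pure bookkeeping once Theorem~\ref{thm:main2} is available. The only point needing a little care is that the family extracted from the separation argument must be genuinely nontrivial — otherwise one could have $C=0$ with all $\psi_j=0$ and the conclusion would be vacuous — but this is precisely the positivity $C>0$ already established in Lemma~\ref{lem:3}, so nothing beyond the results of Section~\ref{sec:proofs} is required.
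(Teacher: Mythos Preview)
Your proof is correct and follows exactly the approach the paper intends: the corollary is stated without proof because it is immediate from Theorem~\ref{thm:main2}, and your argument---writing each $\psi_j$ as $c_j\psi$ when $\dim E_k=1$ and using the homogeneity $L(c_j\psi)=|c_j|^2L(\psi)$ together with the positivity $C>0$ from Lemma~\ref{lem:3}---is precisely the bookkeeping that makes this explicit.
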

We have to leave open the question whether the latter case in the conclusion of Corollary~\ref{cor:mul} can happen.
\subsection{A refinement}
As briefly discussed in the introduction, our characterization of the criticality for pseudohermitian structures \eqref{e:l1} involves the first-order derivatives of the eigenfunctions. In this section, we show that under an additional condition, the term involving the derivatives can be removed. This follows from the lemma below.
\begin{lemma}\label{lem:4} Suppose that $\psi_1, \psi_2,\dots ,\psi_d \in E_k(\theta)$ satisfy
\begin{equation}\label{e:cri}
\sum_{j=1}^{d} \left((n+1)\lambda_k|\psi_j|^2 - n |\bar{\partial}_b \psi_j|^2 \right) = \lambda_k.
\end{equation}
Assume that 
\begin{equation}\label{e:wird}
\int_M\left|\sum_{j=1}^d \bar{\psi}_j \partial_b \psi_j \right|^2 d\vol_{\theta}
\leq 
\int_M\left|\sum_{j=1}^d \bar{\psi}_j \bar{\partial}_b \psi_j \right|^2 d\vol_{\theta}.
\end{equation}
then 
\begin{equation}
\sum_{j=1}^{d} |\psi_j|^2 = \mathrm{constant.}
\end{equation}
\end{lemma}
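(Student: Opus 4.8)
The plan is to exploit the eigenfunction equation together with the hypothesis \eqref{e:cri} to show that the function $\Psi := \sum_{j=1}^d |\psi_j|^2$ is constant, and the mechanism will be an integral identity forcing $\bar{\partial}_b \Psi \equiv 0$. First I would set $\Psi = \sum_j |\psi_j|^2$ and compute $\Box_b \Psi$ (equivalently $\bar{\partial}_b^*\bar{\partial}_b \Psi$) using the product rule for the Tanaka--Webster connection. Writing $\psi_{j,\bar\alpha} = Z_{\bar\alpha}\psi_j$, one has $\Psi_{,\bar\alpha} = \sum_j(\bar{\psi}_{j,\alpha}\psi_j + \bar{\psi}_j \psi_{j,\bar\alpha})$ — wait, more carefully $\Psi = \sum_j \psi_j \bar{\psi}_j$ so $Z_{\bar\alpha}\Psi = \sum_j\big((Z_{\bar\alpha}\psi_j)\bar\psi_j + \psi_j \overline{Z_\alpha \psi_j}\big)$. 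Taking one more covariant derivative and tracing, and using $\Box_b \psi_j = \lambda_k \psi_j$ (hence $-\psi_{j,\bar\alpha}{}^{\bar\alpha} = \lambda_k\psi_j$) together with its conjugate, the second-order terms reorganize into $\lambda_k\Psi$ contributions plus the ``cross'' first-order terms $|\bar{\partial}_b\psi_j|^2$ and $|\partial_b\psi_j|^2$. The upshot should be an identity of the schematic form
\begin{equation}\label{e:boxPsi}
\Box_b \Psi = 2\lambda_k \Psi - 2\sum_{j=1}^d |\bar{\partial}_b \psi_j|^2 + (\text{a Reeb-derivative or commutator term}),
\end{equation}
where the extra term comes from the non-commutativity $[Z_{\bar\alpha}, Z_\beta]$ acting on $\psi_j$ and involves $\sum_j \Im(\bar\psi_j \psi_{j,0})$ or similar. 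I would need to track this term precisely; a cleaner route may be to avoid $\Box_b\Psi$ directly and instead work with the $L^2$ pairing $\langle \bar{\partial}_b\Psi, \bar{\partial}_b\Psi\rangle$ and integrate by parts there, since the Lemma's hypothesis \eqref{e:wird} is manifestly an $L^2$ statement about $\sum_j\bar\psi_j\partial_b\psi_j$ versus $\sum_j\bar\psi_j\bar{\partial}_b\psi_j$.

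The key observation linking things together is that $\bar{\partial}_b\Psi = \sum_j\big((\bar{\partial}_b\psi_j)\bar\psi_j + \psi_j\,\overline{\partial_b\psi_j}\big)$, so
\begin{equation}
\int_M |\bar{\partial}_b\Psi|^2\,d\vol_\theta = \int_M\Big|\sum_j \bar\psi_j\,\bar{\partial}_b\psi_j\Big|^2 + \int_M\Big|\sum_j\psi_j\,\overline{\partial_b\psi_j}\Big|^2 + 2\Re\int_M\Big\langle\sum_j\bar\psi_j\bar{\partial}_b\psi_j,\ \overline{\sum_j\psi_j\,\overline{\partial_b\psi_j}}\,\Big\rangle.
\end{equation}
Here $\big|\sum_j\psi_j\overline{\partial_b\psi_j}\big|^2 = \big|\sum_j\bar\psi_j\partial_b\psi_j\big|^2$ after conjugation. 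So by \eqref{e:wird} the sum of the first two terms is at most $2\int_M\big|\sum_j\bar\psi_j\bar{\partial}_b\psi_j\big|^2$. Then I would separately compute $\int_M\big|\sum_j\bar\psi_j\bar{\partial}_b\psi_j\big|^2$ and the cross term by integration by parts, pushing a $\bar{\partial}_b$ or $\partial_b$ onto the other factor and using $\bar{\partial}_b^*\bar{\partial}_b\psi_j = \lambda_k\psi_j$; the hypothesis \eqref{e:cri}, i.e. $(n+1)\lambda_k\Psi - n\sum_j|\bar{\partial}_b\psi_j|^2 = \lambda_k$, will be used to substitute for $\sum_j|\bar{\partial}_b\psi_j|^2 = \frac{n+1}{n}\lambda_k\Psi - \frac{\lambda_k}{n}$ wherever it appears. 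The goal is to arrive at an inequality of the form $\int_M|\bar{\partial}_b\Psi|^2 \le 0$, which forces $\bar{\partial}_b\Psi \equiv 0$; since $\Psi$ is real-valued this gives $\partial_b\Psi\equiv 0$ as well, and a real function annihilated by both $\partial_b$ and $\bar{\partial}_b$ on a connected strictly pseudoconvex CR manifold is constant (its differential vanishes on the contact distribution, and bracket-generating forces $d\Psi=0$).

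The main obstacle I anticipate is bookkeeping the Reeb-direction (weight) terms and the commutator terms that appear when integrating by parts: on a CR manifold $\bar{\partial}_b$ and $\partial_b$ do not commute, and $[\bar{\partial}_b^*,\partial_b]$-type corrections bring in the Webster torsion and the $T$-derivative. I would hope these terms either cancel in the combination dictated by the coefficients $(n+1)$ and $n$ in \eqref{e:cri} — which is presumably why those exact constants appear in $L$ — or get absorbed using \eqref{e:cri} again. A secondary subtlety is making sure the integration-by-parts manipulations are justified (all $\psi_j$ are smooth since they are eigenfunctions, so this is routine). If the commutator terms do not cancel outright, the fallback is to show they contribute with a favorable sign, so that \eqref{e:wird} still yields $\int_M|\bar{\partial}_b\Psi|^2\le 0$. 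Once $\Psi$ is shown constant the proof is finished.
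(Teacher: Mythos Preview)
Your overall strategy---show $\int_M|\bar\partial_b\Psi|^2\le 0$ by expanding $\bar\partial_b\Psi$ as a sum of two $(0,1)$-forms, integrating by parts, and invoking \eqref{e:cri} and \eqref{e:wird}---is exactly the paper's approach (the paper organizes it as $\int_M\Psi\,\Box_b\Psi$, which is the same quantity). Two points in your sketch need correction, however.

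First, there is no Reeb or commutator correction in $\Box_b\Psi$: the Leibniz rule for second covariant derivatives gives directly
\[
\Box_b\Psi \;=\; \lambda_k\Psi \;+\; \sum_j \psi_j\,\Box_b\bar\psi_j \;-\; \sum_j|\partial_b\psi_j|^2 \;-\; \sum_j|\bar\partial_b\psi_j|^2,
\]
with only \emph{one} $\lambda_k\Psi$. The operator $\Box_b$ is not real, so $\Box_b\bar\psi_j\neq\lambda_k\bar\psi_j$ in general; that term stays and is dealt with by a further integration by parts against $\Psi$. Your concern about commutators is misplaced---no derivatives are being interchanged here.

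Second, and more importantly, your outline is missing the ingredient that actually closes the inequality. Integrating \eqref{e:cri} over $M$ and using $\int_M|\bar\partial_b\psi_j|^2=\lambda_k\int_M|\psi_j|^2$ yields $\int_M\Psi=\vol(M)$, whence $\int_M\Psi(1-\Psi)\le 0$ by Cauchy--Schwarz. When one carries out the integration by parts (on your cross term, or equivalently on $\int_M\Psi\,\Box_b\Psi$) and substitutes \eqref{e:cri} for $\sum_j|\bar\partial_b\psi_j|^2$, the outcome is the exact identity
\[
\int_M|\bar\partial_b\Psi|^2 \;=\; \frac{2\lambda_k}{n}\int_M\Psi(1-\Psi)\;+\;\int_M\Bigl|\sum_j\bar\psi_j\partial_b\psi_j\Bigr|^2-\int_M\Bigl|\sum_j\bar\psi_j\bar\partial_b\psi_j\Bigr|^2.
\]
Thus the two hypotheses each kill one term: \eqref{e:wird} controls the second piece, and the mean-one property of $\Psi$ (from \eqref{e:cri}) controls the first. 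Your sketch leans entirely on \eqref{e:wird} and never invokes the averaged form of \eqref{e:cri}; without it the argument does not close.
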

We point out that condition \eqref{e:wird} holds if either $\psi_j$'s are real-valued for all $j$, or the conjugations $\overline{\psi}_j$ are CR for all~$j$.
\begin{proof}
Let $\varphi = \sum_{j=1}^{d} |\psi_j|^2$. Integrating both sides of \eqref{e:cri} and using integration by parts, we have (we drop the volume form to simplify our notations)
\begin{align}
\lambda_k \vol(M) 
& = (n+1)\lambda_k \int_M \varphi - n \sum_{j=1}^d \int_M |\bar{\partial}_b \psi_j|^2 \notag \\
& = 
(n+1)\lambda_k \int_M \varphi - n \sum_{j=1}^d \int_M \lambda_k |\psi_j|^2 \notag \\
& = \lambda_k \int_M \varphi.
\end{align}
It follows that $\int_M \varphi = \vol(M)$. On the other hand, by direct calculations, 
\begin{equation}
\Box_b \varphi 
=
\lambda_k \varphi + \sum_{j=1}^d \psi_j \Box_b \bar{\psi}_j - \sum_{j=1}^d |\partial_b \psi_j|^2 - \sum_{j=1}^d |\bar{\partial}_b \psi_j|^2.
\end{equation}
This and \eqref{e:cri} imply that
\begin{align}\label{e:ha}
\int_M \varphi \Box_b \varphi
=
\frac{\lambda_k}{n} \int_M \varphi(1 - \varphi) + \sum_{j=1}^d \int_M \varphi \psi_j \Box_b \bar{\psi}_j - \sum_{j=1}^d \int_M \varphi |\partial_b \psi_j|^2.
\end{align}
Using integration by parts, we obtain for every $j$ and $\ell$, 
\begin{align}
\int_M \psi_j \bar{\psi}_{j,\bar{\alpha}} \psi_{\ell} \bar{\psi}_{\ell,\alpha}
& =
- \int_M \bar{\psi}_j \left(\psi_j \psi_{\ell} \bar{\psi}_{\ell,\alpha} \right)_{,\bar{\alpha}} \notag \\
& = - \int_M \bar{\psi}_j\psi_{\ell} \psi_{j,\bar{\alpha}}\bar{\psi}_{\ell,\alpha} 
- \int_M |\psi_j|^2 |\bar{\partial}_b \psi_{\ell}|^2
+ \lambda_k\int_M |\psi_j|^2 |\psi_{\ell}|^2.
\end{align}
Here, the Greek indexes preceded by commas denote the Tanaka--Webster covariant derivatives with respect to an orthonormal frame of $T^{1,0}M$ and their conjugates. Taking the sum over $j$ and $\ell$, we obtain
\begin{align}
\sum_{j,\ell} \int_M \psi_j \bar{\psi}_{j,\bar{\alpha}} \psi_{\ell} \bar{\psi}_{\ell,\alpha}
& =
-\int_M \left|\sum_{j=1}^d \bar{\psi}_j \bar{\partial}_b \psi_j \right|^2
- \sum_{j=1}^d \int_M \varphi |\bar{\partial}_b \psi_j|^2 + \lambda_k \int_M \varphi^2 \notag \\
& = 
-\int_M \left|\sum_{j=1}^d \bar{\psi}_j \bar{\partial}_b \psi_j \right|^2 + \frac{\lambda_k}{n}\int_M \varphi(1-\varphi).
\end{align}
We compute,
\begin{align}
\sum_{j=1}^d \int_M \varphi \psi_j \Box_b \bar{\psi}_j - \sum_{j=1}^d \int_M \varphi |\partial_b \psi_j|^2
& = 
\sum_{j=1}^d \int_M \bar{\psi}_{j,\bar{\alpha}}(\varphi\psi_j)_{,\alpha} - \sum_{j=1}^d \int_M \varphi |\partial_b \psi_j|^2 \notag \\
& = 
\sum_{j=1}^d \int_M\psi_j \bar{\psi}_{j, \bar{\alpha}} \varphi_{\alpha} \notag \\
& = 
\sum_{j,\ell=1}^d \int_M \psi_j \bar{\psi}_{j,\bar{\alpha}} \psi_{\ell} \bar{\psi}_{\ell,\alpha} 
+
\int_M \left|\sum_{j=1}^d \bar{\psi}_j \partial_b \psi_j \right|^2 \notag \\
& = 
\int_M \left|\sum_{j=1}^d \bar{\psi}_j \partial_b \psi_j \right|^2
- \int_M \left|\sum_{j=1}^d \bar{\psi}_j \bar{\partial}_b \psi_j \right|^2 \notag \\
& \quad + \frac{\lambda_k}{n}\int_M \varphi(1-\varphi).
\end{align}
Plugging this into \eqref{e:ha}, we have that 
\begin{align}
0
& \leq \int_M (\Box_b \varphi) \varphi \notag \\
& =
\frac{2\lambda_k}{n}\int_M \varphi (1-\varphi)   + \int_M 
\left|\sum_{j=1}^d \bar{\psi}_j \partial_b \psi_j \right|^2
-\int_M\left|\sum_{j=1}^d \bar{\psi}_j \bar{\partial}_b \psi_j \right|^2 \notag \\
& \leq 0.
\end{align}
In the last inequality, we have used the fact that the average value of $\varphi$ is 1. Therefore, $\varphi$ must be a constant. The proof is complete.
\end{proof}
More generally, \eqref{e:wird} holds if $\psi_j$'s satisfy the following Beltrami-type equation for CR quasiconformal mappings (see \cite{KR}),
\begin{equation}\label{e:qcm}
f_{\alpha} = \mu_{\alpha}{}^{\bar{\beta}} f_{\bar{\beta}},
\end{equation}
almost everywhere on $M$, where $\mu = \mu_{\alpha}{}^{\bar{\beta}}$ is a tensor field whose operator norm (viewed as a field of complex linear mappings from $(T^{0,1}M)^\ast \to (T^{1,0}M)^\ast$) is less than one. We thus obtain the following corollary.
\begin{corollary}\label{prop:crsphere}
Let $(M,\theta)$ be a compact embeddable strictly pseudoconvex pseudohermitian manifold and let $\lambda_k$ be an eigenvalue of the Kohn Laplacian. Assume that $k=1$ or $\lambda_{k-1} (\theta) < \lambda_{k+1}(\theta)$. 
Assume further that the corresponding eigenfunctions satisfy the Beltrami-type equation \eqref{e:qcm} with $\|\mu\| \leq 1$, or has a basis consisting of real-valued functions. Then the following are equivalent.
\begin{enumerate}[(i)]
\item $\theta$ is a critical for the $\lambda_k$-functional.
\item There exists a finite family of eigenfunctions $\{\psi_1, \psi_2, \dots , \psi_d\}$ such that $|\psi_1|^2 + |\psi_2|^2 + \cdots + |\psi_d|^2 = 1$ on $M$.
\end{enumerate}
\end{corollary}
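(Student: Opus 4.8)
The plan is to deduce Corollary~\ref{prop:crsphere} by combining Theorem~\ref{thm:main2} with Lemma~\ref{lem:4}. The equivalence (i)$\Leftrightarrow$(ii) will follow once we verify that, under the stated hypotheses on the eigenfunctions, the identity \eqref{e:l1} in Theorem~\ref{thm:main2} is equivalent to the assertion that a finite sum of squared norms $\sum_j |\psi_j|^2$ is constant (equal to $1$ after normalization). Since we assume $k=1$ or $\lambda_{k-1}(\theta) < \lambda_{k+1}(\theta)$, Theorem~\ref{thm:main2} gives that $\theta$ is critical if and only if there is a finite family $\psi_1,\dots,\psi_d \in E_k(\theta)$ with $\sum_{j=1}^d L(\psi_j) = C$ constant; and by the computation at the end of the proof of Lemma~\ref{lem:3}, integrating shows $C = \lambda_k \vol(M,\theta)/\vol(M,\theta)$, i.e., after rescaling the family by a positive constant we may take $C = \lambda_k$, which is precisely the normalization \eqref{e:cri}.

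First I would prove (i)$\Rightarrow$(ii). Assuming $\theta$ is critical, Theorem~\ref{thm:main2} produces $\psi_1,\dots,\psi_d\in E_k(\theta)$ with $\sum_j L(\psi_j)$ constant; rescaling, we arrange $\sum_j L(\psi_j) = \lambda_k$, i.e., \eqref{e:cri} holds. Now I must check that hypothesis \eqref{e:wird} of Lemma~\ref{lem:4} is satisfied. This is where the two special cases enter: if the $\psi_j$ can be taken real-valued, then $\partial_b \psi_j = \overline{\bar\partial_b \psi_j}$ pointwise, so $\sum_j \bar\psi_j \partial_b\psi_j$ and $\sum_j \bar\psi_j \bar\partial_b\psi_j$ have equal pointwise norms and \eqref{e:wird} holds with equality; if instead the eigenfunctions satisfy the Beltrami-type equation \eqref{e:qcm} with $\|\mu\|\le 1$, then pointwise $\bar\psi_j \partial_b\psi_j = \bar\psi_j \mu(\bar\partial_b\psi_j)$, so $|\sum_j \bar\psi_j\partial_b\psi_j| = |\mu(\sum_j \bar\psi_j\bar\partial_b\psi_j)| \le \|\mu\| \, |\sum_j \bar\psi_j\bar\partial_b\psi_j| \le |\sum_j\bar\psi_j\bar\partial_b\psi_j|$, and integrating gives \eqref{e:wird}. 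In either case Lemma~\ref{lem:4} applies and yields $\sum_j |\psi_j|^2 = \text{constant}$; since its average is $1$ (again by the integration-by-parts identity in the proof of Lemma~\ref{lem:4}), the constant is $1$, giving (ii).

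Next I would prove (ii)$\Rightarrow$(i), which is the easy direction and does not need the Beltrami/real hypothesis. If $\sum_j |\psi_j|^2 = 1$, then $\Box_b\big(\sum_j|\psi_j|^2\big) = 0$; expanding $\Box_b(|\psi_j|^2)$ for an eigenfunction $\psi_j$ and summing, one gets a pointwise identity of the form $\sum_j\big((n+1)\lambda_k|\psi_j|^2 - n|\bar\partial_b\psi_j|^2\big) = \text{constant}$ — in fact one checks the constant is $\lambda_k$ — which is exactly \eqref{e:l1}. (Alternatively, one observes directly that for $f\in\mathcal{A}_0(M,\theta)$, $\sum_j Q_f(\psi_j,\psi_j) = -\int_M f\sum_j L(\psi_j)\,d\vol_\theta = -\mathrm{const}\int_M f\,d\vol_\theta = 0$, so $Q_f|_{E_k}$ is indefinite for every such $f$, and Lemma~\ref{thm:6.2}(ii) gives criticality.) Either way Theorem~\ref{thm:main2} (or Lemma~\ref{thm:6.2}(ii)) concludes that $\theta$ is critical.

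The one genuinely delicate point — and the main obstacle — is the passage from "some constant $C>0$" in \eqref{e:l1a} to the precise normalization $C = \lambda_k$ needed to invoke Lemma~\ref{lem:4}: one must be careful that rescaling the family $\{\psi_j\}$ by a common positive factor $c$ multiplies $\sum_j L(\psi_j)$ by $c^2$ and hence can adjust $C$ to any prescribed positive value, in particular to $\lambda_k$; simultaneously one must confirm via integration by parts that the resulting constant automatically has average $1$ so that the output of Lemma~\ref{lem:4} is normalized as in (ii). Both are short computations already essentially carried out in the proofs of Lemma~\ref{lem:3} and Lemma~\ref{lem:4}, so no new ideas are required; the work is just bookkeeping of constants.
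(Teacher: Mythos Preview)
Your direction (i)$\Rightarrow$(ii) follows exactly the paper's intended route: Theorem~\ref{thm:main2} produces a family with $\sum_j L(\psi_j)$ constant, one rescales to normalize the constant to $\lambda_k$, checks that the Beltrami (resp.\ real-valued) hypothesis forces the inequality~\eqref{e:wird}, and then Lemma~\ref{lem:4} yields $\sum_j|\psi_j|^2=1$. That part is fine.

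There is, however, a genuine gap in your (ii)$\Rightarrow$(i). You assert that expanding $\Box_b\bigl(\sum_j|\psi_j|^2\bigr)=0$ gives directly the identity $\sum_j L(\psi_j)=\text{constant}$, and that this step does \emph{not} need the special hypothesis. This is false. The product-rule expansion (exactly the paper's \eqref{e:a}) reads
\[
0=\lambda_k+\sum_j\psi_j\,\Box_b\bar\psi_j-\sum_j|\partial_b\psi_j|^2-\sum_j|\bar\partial_b\psi_j|^2,
\]
and the two extra terms $\psi_j\Box_b\bar\psi_j$ and $|\partial_b\psi_j|^2$ cannot be eliminated in general: since $\Box_b$ is not a real operator, $\bar\psi_j$ need not be an eigenfunction, and $|\partial_b\psi_j|^2\neq|\bar\partial_b\psi_j|^2$ for complex $\psi_j$. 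Your ``alternative'' via $Q_f$ is circular, as the middle equality already assumes $\sum_j L(\psi_j)$ is constant. The paper's actual argument for this direction is Lemma~\ref{lem:71}, which explicitly invokes the real-valued (or anti-CR) hypothesis: in the real case $\psi_j\Box_b\bar\psi_j=\lambda_k|\psi_j|^2$ and $|\partial_b\psi_j|^2=|\bar\partial_b\psi_j|^2$, so the expansion collapses to $\sum_j|\bar\partial_b\psi_j|^2=\lambda_k$ and hence $\sum_j L(\psi_j)=\lambda_k$. So (ii)$\Rightarrow$(i) \emph{does} require the hypothesis.

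A smaller point you should also address in the real-basis case of (i)$\Rightarrow$(ii): the family $\{\psi_j\}$ produced by Theorem~\ref{thm:main2} is not a priori real-valued, so you cannot assert \eqref{e:wird} ``because the $\psi_j$ can be taken real-valued'' without first explaining how to replace the given family by a real one while preserving constancy of $\sum_j L(\psi_j)$.
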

\section{Eigenvalue ratio functionals}\label{sec:ratios}
We consider the scaling invariant eigenvalue ratio functionals $ \theta \mapsto \displaystyle{{\lambda_{k+1}(\theta) / \lambda_{k}(\theta)}}$ for $k \geq 1$. If $\theta(t)$ is any analytic deformation of a pseudohermitian structure $\theta$, then by Theorem~\ref{thm:dif}, $t \mapsto \displaystyle{{\lambda_{k+1}(\theta(t)) / \lambda_{k}(\theta(t))}}$ admits left and right derivatives at $t=0$. Therefore, we can introduce the following notion (cf. \cite{AhSa,ADS2}).
\begin{definition} A pseudohermitian structure $\theta$ is said to be critical for the ratio $\lambda_{k+1} / \lambda_{k}$ if for any analytic deformation $\theta(t) = e^{u_t}\theta$, the left and right derivatives of $\lambda_{k+1}(\theta (t)) / \lambda_{k}(\theta(t))$ at $t=0$ have opposite signs or one of them vanishes.
\end{definition}
We introduce, for each $\theta\in \mathcal{P}_+$, the operator $P_k:E_{k}(\theta) \otimes E_{k+1}(\theta)\rightarrow E_{k}(\theta) \otimes E_{k+1}(\theta)$ defined by
\begin{equation}
P_k=\lambda_{k+1}(\theta) (\Pi_k\circ \Box'_b)\otimes I_{E_{k+1}(\theta)}-\lambda_{k}(\theta) I_{E_{k}(\theta)}\otimes (\Pi_{k+1}\circ \Box'_b),
\end{equation}
where $\Pi_k:L^2 (M, \Psi_\theta )\rightarrow E_k(\theta)$ is the orthogonal projection and $I$ is the identity. The Hermitian form naturally associated with $P_{k}$, denoted by $\widetilde{Q}_{f}$, is defined as follows: For every $v_1,\,w_1 \in E_{k}(\theta)$ and $v_2,\, w_2 \in E_{k+1}(\theta)$,
\begin{equation}
\widetilde{Q}_{f}\big(v_1\otimes w_1, v_2 \otimes w_2) = \langle \Box_b w_1, w_2\rangle_{L^{2}(\theta)} Q_{f}(v_1,v_2)
- \langle \Box_b v_1 , v_2 \rangle_{L^{2}(\theta)} Q_{f}(w_1,w_2).
\end{equation}
\begin{theorem}\label{theo 61}
Let $(M,\theta)$ be a compact embeddable strictly pseudoconvex pseudohermtian manifold. Then $\theta$ is critical for the functional $\frac{\lambda_{k+1}}{\lambda_{k}}$ if and only if the Hermitian form $ \widetilde{Q}_f$ is indefinite on $ E_{k}(\theta) \otimes E_{k+1}(\theta)$ for every real-valued regular function $f$.
\end{theorem}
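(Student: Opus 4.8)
The plan is to mimic the structure of the proof of Theorem~\ref{thm:main2}, i.e. to reduce the criticality of the ratio functional $\lambda_{k+1}/\lambda_k$ to an indefiniteness statement about a Hermitian form by differentiating along the canonical volume-normalized analytic deformations $\theta(t)=e^{u_t}\theta$ with $\frac{d}{dt}u_t|_{t=0}=f$. The first step is to compute the one-sided derivatives of $t\mapsto \lambda_{k+1}(\theta(t))/\lambda_k(\theta(t))$ at $t=0$. By Theorem~\ref{thm:dif}, both $\lambda_k(\theta(t))$ and $\lambda_{k+1}(\theta(t))$ are given, for $t<0$ and for $t>0$ separately, by real-analytic branches $\Lambda_p(t)$, $\Lambda_q(t)$ coming from the analytic parametrizations near the eigenvalues $\lambda_k(\theta)$ and $\lambda_{k+1}(\theta)$. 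Using the quotient rule, the one-sided derivatives of the ratio at $t=0$ are of the form $\lambda_k^{-2}\bigl(\lambda_k\,\Lambda_q'(0) - \lambda_{k+1}\,\Lambda_p'(0)\bigr)$, where $\Lambda_p'(0)$ is an eigenvalue of $\Pi_k\circ\Box_b'|_{E_k}=Q_f|_{E_k}$ (up to the identification established in the proof of Theorem~\ref{thm:1.2}) and $\Lambda_q'(0)$ is an eigenvalue of $Q_f|_{E_{k+1}}$. The key observation is that the set of numbers $\{\lambda_k\mu_2 - \lambda_{k+1}\mu_1 : \mu_1\in\Spec(Q_f|_{E_k}),\ \mu_2\in\Spec(Q_f|_{E_{k+1}})\}$ is exactly the spectrum of the operator $P_k$ on $E_k(\theta)\otimes E_{k+1}(\theta)$, since $P_k=\lambda_{k+1}(\Pi_k\circ\Box_b')\otimes I - \lambda_k\, I\otimes(\Pi_{k+1}\circ\Box_b')$ acts diagonally on the tensor product of eigenbases and its eigenvalues are $\lambda_{k+1}\mu_1 - \lambda_k\mu_2$ (a sign bookkeeping I will fix so that it matches the definition of $\widetilde Q_f$).

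Next I would verify that $\widetilde Q_f$ is the Hermitian form naturally attached to $P_k$ under the inner product on $E_k(\theta)\otimes E_{k+1}(\theta)$ induced by $\langle\Box_b\cdot,\cdot\rangle$ on each factor; concretely, using the computation in the proof of Theorem~\ref{thm:1.2} that $\langle\Box_b'\varphi,\psi\rangle = Q_f(\varphi,\psi)$ for $\varphi,\psi$ in the relevant eigenspace, one checks directly that $\widetilde Q_f(v_1\otimes w_1, v_2\otimes w_2)$ as defined equals $\langle P_k(v_1\otimes w_1), v_2\otimes w_2\rangle$ with respect to that weighted inner product. Hence the eigenvalues of $\widetilde Q_f$ are precisely the one-sided derivatives of the ratio functional (times the positive constant $\lambda_k^{-2}$), just as $Q_f|_{E_k}$ captured the one-sided derivatives of $\lambda_k$ in Theorem~\ref{thm:1.2}. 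Then criticality — the left and right derivatives have opposite signs or one vanishes — translates exactly into: the largest and smallest eigenvalues of $\widetilde Q_f$ have opposite signs (or one is zero), i.e. $\widetilde Q_f|_{E_k\otimes E_{k+1}}$ is indefinite (in the weak sense of having both a nonnegative and a nonpositive eigenvalue).

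For the reduction from "all analytic volume-preserving deformations" to "all real-valued regular $f$", I would reuse Lemma~\ref{thm:6.2}: every $f\in\mathcal{A}_0(M,\theta)$ is realized by the explicit normalized deformation $\theta(t)=[\vol(\theta)/\vol(e^{tf}\theta)]^{1/(n+1)}e^{tf}\theta$, and conversely every analytic deformation in $\mathcal P_+^0$ has $\frac{d}{dt}u_t|_{t=0}\in\mathcal{A}_0(M,\theta)$; moreover the ratio $\lambda_{k+1}/\lambda_k$ is scaling-invariant, so one may drop the volume normalization entirely and allow arbitrary real-valued $f$ (the constant part of $f$ contributes nothing). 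One subtlety I must handle carefully is the labeling of the analytic branches: unlike in Theorem~\ref{thm:1.2}(iii)–(iv), here I do not assume any gap condition separating $\lambda_k$ from $\lambda_{k-1}$ or $\lambda_{k+1}$ from $\lambda_{k+2}$, so $\lambda_k(\theta(t))$ and $\lambda_{k+1}(\theta(t))$ need only be the $k$-th and $(k+1)$-th smallest among all analytic branches through a neighborhood; the argument must therefore go through the full finite collection of branches and extract the extreme one-sided slopes, exactly as in the proof of Theorem~\ref{thm:1.2}, without invoking min/max over a single eigenspace. The main obstacle is precisely this combinatorial/ordering step — matching the one-sided derivatives of the $k$-th ordered branch of the ratio with the extreme eigenvalues of $\widetilde Q_f$ when several eigenvalues of $\Box_b$ may coincide or cross — and I expect to resolve it by the same continuity-plus-analyticity argument (Theorem~\ref{thm:main} forces the global resolvent set open, Theorem~\ref{thm:dif} gives analytic branches) used throughout Section~\ref{sec:der}, together with the elementary fact that $\min$ and $\max$ of finitely many analytic functions have one-sided derivatives equal to the corresponding extreme slopes.
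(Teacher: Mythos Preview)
Your overall strategy is the same as the paper's, and your identification of $\widetilde Q_f$ as the form associated to $P_k$ (with eigenvalues $\lambda_{k+1}\mu_1-\lambda_k\mu_2$) is exactly right. However, the ``main obstacle'' you flag does not actually exist, and you have omitted the one case split that makes it disappear.

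The paper first disposes of the case $\lambda_k(\theta)=\lambda_{k+1}(\theta)$: then $E_k(\theta)=E_{k+1}(\theta)$, and for any $v$ in this common eigenspace one has $\widetilde Q_f(v\otimes v,v\otimes v)=0$, so $\widetilde Q_f$ is automatically indefinite; on the other side, the ratio $\lambda_{k+1}(\theta(t))/\lambda_k(\theta(t))\geq 1$ attains its minimum at $t=0$, so $\theta$ is trivially critical. In the remaining case $\lambda_k(\theta)<\lambda_{k+1}(\theta)$, this single gap is \emph{all} that Theorem~\ref{thm:1.2} requires: part (iv) applies to $\lambda_k$ because $\lambda_k<\lambda_{k+1}$, and part (iii) applies to $\lambda_{k+1}$ because $\lambda_{k+1}>\lambda_k$. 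Hence the one-sided derivatives of $\lambda_k$ and of $\lambda_{k+1}$ at $t=0$ are already the extreme eigenvalues of $Q_f|_{E_k}$ and $Q_f|_{E_{k+1}}$, respectively, with no need to worry about coincidences with $\lambda_{k-1}$ or $\lambda_{k+2}$ or about ordering a larger family of analytic branches. The quotient rule then gives the one-sided derivatives of the ratio as the extreme eigenvalues of $P_k$, and criticality is equivalent to $\widetilde Q_f$ being indefinite, exactly as you outlined. Your detour through ``the full finite collection of branches'' is unnecessary once you make this case split.
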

\begin{proof} Firstly, we consider the case $\lambda_{k+1}(\theta)=\lambda_{k}(\theta)$. The ratio functional $\lambda_{k+1}/\lambda_k$ attains a global minimum at $ \theta$ and hence $\theta$ must be critical for the ratio functional. On the other hand, since $ \widetilde{Q_f} (v_1\otimes v_2, v_1\otimes v_2)=0$ for all $(v_1,v_2) \in E_k(\theta) \times E_{k+1}(\theta)$, the Hermitian form $\widetilde{Q_f}$ is indefinite on $ E_{k}(\theta) \otimes E_{k+1}(\theta)$, as desired.

Secondly, assume that $\lambda_{k+1}(\theta)>\lambda_{k}(\theta)$ and let $\theta( t)$ be an analytic deformation of $\theta$. From Theorem \ref{thm:1.2}, we have
\begin{equation}
\frac{d}{dt}\lambda_{k}(\theta(t))\big|_{t=0^-}\quad \text{and} \quad \frac{d}{dt}\lambda_{k}(\theta(t))\big|_{t=0^+}
\end{equation}
are the least and the greatest eigenvalues of $(\Pi_k\circ \Box'_b)$ on $E_{k}(\theta)$ respectively, and similarly for $k+1$. Therefore,
\begin{equation}
\lambda_{k}(\theta)^{2} \frac{d}{dt}\frac{\lambda_{k+1}(\theta(t))}{\lambda_{k}(\theta(t))}\Big|_{t=0^-}
=\left[\lambda_{k}(\theta) \frac{d}{dt}\lambda_{k+1}(\theta(t))\big|_{t=0^-}- \lambda_{k+1}(\theta) \frac{d}{dt}\lambda_{k}(\theta(t))\Big|_{t=0^-}\right]
\end{equation}
is the greatest eigenvalue of $P_{k}$ on $E_{k}(\theta) \otimes E_{k+1}(\theta)$, and
\begin{equation}
\lambda_{k}(\theta)^{2} \frac{d}{dt}\frac{\lambda_{k+1}(\theta(t))}{\lambda_{k}(\theta(t))}\Big|_{t=0^+}
=\left[\lambda_{k}(\theta) \frac{d}{dt}\lambda_{k+1}(\theta(t))\Big|_{t=0^+}- \lambda_{k+1}(\theta(t)) \frac{d}{dt}\lambda_{k}(\theta(t))\Big|_{t=0^+}\right]
\end{equation}
is the least eigenvalue of $P_{k}$ on $E_{k}(\theta) \otimes E_{k+1}(\theta)$. Hence, the criticality of $\theta$ for the ratio functional $\frac{\lambda_{k+1}}{\lambda_{k}}$ is equivalent to the fact that $P_{k}$ admits eigenvalues of both signs, which is equivalent to the indefiniteness of $\widetilde{Q_f}$ on $ E_{k}(\theta) \otimes E_{k+1}(\theta)$. The proof is complete.
\end{proof}
\begin{proposition}\label{pro 6.3}
Let $M$ be a compact embeddable strictly pseudoconvex CR manifold. For any pseudohermitian structure $\theta$ on M, the following conditions are equivalent.
\begin{enumerate}[(i)]
\item For all $f \in \mathcal{A}_{0}(M,\theta)$, the Hermitian form $\widetilde{Q_f}$ is indefinite on $E_k(\theta) \otimes E_{k+1}(\theta)$.
\item There exist finite families $\{\psi_1, \psi_2, \dots , \psi_d\} \subset E_{k}(\theta)$
and $\{\phi_1, \phi_2, \dots , \phi_e\} \subset E_{k+1}(\theta)$ of eigenfunctions such that
\begin{equation}\label{e:6.2}
\sum_{j=1}^{d} L(\psi_j) 
= 
\sum_{l=1}^{e} L(\phi_l).
\end{equation}
Here $L$ is defined by~\eqref{e:Ldef}.
\end{enumerate}
\end{proposition}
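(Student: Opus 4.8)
The plan is to follow the proof of Lemma~\ref{lem:3} almost verbatim, the single convex cone used there being replaced by the pair of cones attached to $E_k(\theta)$ and $E_{k+1}(\theta)$. First I would introduce
\[
  \mathcal{C}_k=\Bigl\{\,\textstyle\sum_{j\in J}L(v_j):v_j\in E_k(\theta),\ J\subset\mathbb{N}\text{ finite}\,\Bigr\},\qquad
  \mathcal{C}_{k+1}=\Bigl\{\,\textstyle\sum_{l\in K}L(w_l):w_l\in E_{k+1}(\theta),\ K\subset\mathbb{N}\text{ finite}\,\Bigr\},
\]
which are convex cones inside a fixed finite-dimensional subspace of $C^\infty(M;\R)$. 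Integrating \eqref{e:Ldef} and integrating by parts as in the proof of Theorem~\ref{thm:1.2} gives $\int_M L(\chi,\chi)\,d\vol_\theta=\langle\Box_b\chi,\chi\rangle$, so $\int_M u\,d\vol_\theta=\lambda_k\sum_j\|v_j\|^2$ for $u=\sum_jL(v_j)\in\mathcal{C}_k$ and similarly with $\lambda_{k+1}$ on $\mathcal{C}_{k+1}$; in particular every nonzero element of either cone has strictly positive integral, and a Carath\'eodory-plus-compactness argument (as used implicitly in Lemma~\ref{lem:3}) shows that both cones are closed. With this notation, condition~(ii) is precisely the statement that $\mathcal{C}_k\cap\mathcal{C}_{k+1}\ne\{0\}$.

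For the implication (ii)$\Rightarrow$(i), let $\{\psi_j\}\subset E_k(\theta)$ and $\{\phi_l\}\subset E_{k+1}(\theta)$ satisfy $\sum_jL(\psi_j)=\sum_lL(\phi_l)=:G$. Since $\Box_b$ is scalar on each eigenspace one has $\widetilde{Q}_f(v\otimes w,v\otimes w)=\lambda_{k+1}\|w\|^2Q_f(v,v)-\lambda_k\|v\|^2Q_f(w,w)$ for $v\in E_k(\theta),\,w\in E_{k+1}(\theta)$; combining this with $\sum_j\|\psi_j\|^2=\lambda_k^{-1}\int_MG$, $\sum_l\|\phi_l\|^2=\lambda_{k+1}^{-1}\int_MG$, and $\sum_jQ_f(\psi_j,\psi_j)=-\int_MfG=\sum_lQ_f(\phi_l,\phi_l)$, the double sum $\sum_{j,l}\widetilde{Q}_f(\psi_j\otimes\phi_l,\psi_j\otimes\phi_l)$ reduces to $(\int_MG)(\int_MfG)-(\int_MG)(\int_MfG)=0$. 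Because the vectors $\psi_j\otimes\phi_l$ are not all zero, $\widetilde{Q}_f$ can be neither positive nor negative definite on $E_k(\theta)\otimes E_{k+1}(\theta)$, i.e.\ it is indefinite, for every $f\in\mathcal{A}_0(M,\theta)$.

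For (i)$\Rightarrow$(ii) I would argue by contraposition. If $\mathcal{C}_k\cap\mathcal{C}_{k+1}=\{0\}$, then intersecting the two cones with the hyperplane $\{u:\int_Mu\,d\vol_\theta=1\}$ produces disjoint compact convex sets, so the classical separation theorem gives a smooth real-valued $h$ with $\langle h,\cdot\rangle$ strictly negative on $\mathcal{C}_k\setminus\{0\}$ and strictly positive on $\mathcal{C}_{k+1}\setminus\{0\}$ (using positivity of the integral on nonzero cone elements to return from the slices to the cones, and subtracting a constant multiple of $\mathbf{1}$ to absorb the separating level). Take $f=h$. A short computation shows $Q_{f+c}|_{E_k}=Q_f|_{E_k}-c\lambda_kI$ and $Q_{f+c}|_{E_{k+1}}=Q_f|_{E_{k+1}}-c\lambda_{k+1}I$, so $\widetilde{Q}_f$ is unchanged under adding a constant to $f$ and there is no need to normalize $h$ to have zero mean. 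Then $Q_f(\psi,\psi)=-\int_Mh\,L(\psi)\,d\vol_\theta>0$ for every $\psi\in E_k(\theta)\setminus\{0\}$ and $Q_f(\phi,\phi)<0$ for every $\phi\in E_{k+1}(\theta)\setminus\{0\}$; that is, $Q_f|_{E_k(\theta)}$ is positive definite and $Q_f|_{E_{k+1}(\theta)}$ is negative definite. As $\widetilde{Q}_f$ is represented by the commuting pair of positive-definite operators $\lambda_{k+1}(Q_f|_{E_k})\otimes I$ and $-\lambda_k\,I\otimes(Q_f|_{E_{k+1}})$, the form $\widetilde{Q}_f$ is positive definite, contradicting~(i). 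Hence~(ii) holds.

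I expect the main difficulty to be organizational rather than in any single computation: one must pin down that both cones are closed, so that separation applies, and one must translate definiteness of $\widetilde{Q}_f$ on the tensor product $E_k(\theta)\otimes E_{k+1}(\theta)$ into correctly signed definiteness of $Q_f$ on the two eigenspaces separately. The second point relies on the remark that, since $\Box_b|_{E_k}=\lambda_kI$ and $\Box_b|_{E_{k+1}}=\lambda_{k+1}I$, the form $\widetilde{Q}_f$ is simultaneously diagonalizable with eigenvalues $\lambda_{k+1}a_i-\lambda_kb_j$, where $\{a_i\}$ and $\{b_j\}$ are the eigenvalues of $Q_f|_{E_k(\theta)}$ and $Q_f|_{E_{k+1}(\theta)}$; this, together with the constant-shift invariance of $\widetilde{Q}_f$ noted above, is exactly what makes the one-cone argument of Lemma~\ref{lem:3} carry over to the present two-cone setting.
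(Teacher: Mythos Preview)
Your proposal is correct and follows essentially the same route as the paper: the two convex cones $\mathcal{C}_k$, $\mathcal{C}_{k+1}$ and the classical separation theorem for (i)$\Rightarrow$(ii), and the vanishing of the double sum $\sum_{j,l}\widetilde{Q}_f(\psi_j\otimes\phi_l,\psi_j\otimes\phi_l)$ for (ii)$\Rightarrow$(i). Your write-up is in fact more careful than the paper's on two points the paper leaves implicit: the closedness/compactness needed to apply strict separation, and the observation that $\widetilde{Q}_f$ is invariant under $f\mapsto f+c$ (so the separating functional may be taken in $\mathcal{A}_0(M,\theta)$), which justifies why the contradiction really hits condition~(i).
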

\begin{proof} The proof use similar arguments as in \cite{AhSa} and \cite{ADS2}. For the implication
$(i)\Rightarrow (ii)$, recall that for each $k$, the convex cone $\mathcal{C}_k \subset L^2(M,\theta ; \mathbb{R})$ are defined by 
\begin{equation*}
\mathcal{C}_k:=
\left\{
\sum_{j\in J}\, L(\psi_j)
\colon
\psi_j \in E_k(\theta) , J \subset \mathbb{N}, \ J \ \text{finite}
\right\}.
\end{equation*}
It suffices to prove that $\mathcal{C}_{k}$ and $\mathcal{C}_{k+1}$ have a nontrivial intersection. Indeed, if otherwise, by the classical separation theorem, there exists a real-valued function $h\in L^2(M,\theta ;\mathbb{R})$ such that
\begin{equation}
\int_M h w_1 > 0, \quad w_1 \in \mathcal{C}_{k},
\end{equation}
and 
\begin{equation}
\int_M h w_2 \leq 0, \quad w_2 \in \mathcal{C}_{k+1}.
\end{equation}
Therefore, $Q_{h}(v, v) > 0$ for all $v\in E_k$ and $Q_{h}(w, w) \leq 0$ for all $w\in E_{k+1}$. Hence 
\begin{equation}
\widetilde{Q_f}\big(v\otimes w, v \otimes w) > 0.
\end{equation}
This contradicts the assumption that $\widetilde{Q_f}$ is indefinite on $E_k(\theta) \otimes E_{k+1}(\theta)$.

Conversely, if $\psi_j$ and $\phi_l$ are as above such that \eqref{e:6.2} holds, taking integral both sides and using integration by parts, we have 
\begin{equation}
\lambda_k(\theta)\sum_{j=1}^{d} \int_M |\psi_j |^2
=
\lambda_{k+1}(\theta)\sum_{l=1}^{e} \int_M | \phi_l |^2.
\end{equation}
On the other hand, \eqref{e:6.2} also implies that for any smooth function $f$,
\begin{equation}
\sum_{j=1}^{d} Q_{f}(\psi_j, \psi_j) 
=
\sum_{l=1}^{e} Q_{f}(\phi_l, \psi_l).
\end{equation}
Therefore,
\begin{align}
\sum_{j,l}\widetilde{Q_f}\big(\psi_j \otimes \phi_l, \psi_j \otimes \phi_l) = 0
\end{align}
and thus $\widetilde{Q_f}$ is indefinite, as desired.
\end{proof}
Combining Theorem~\ref{theo 61} and Proposition~\ref{pro 6.3}, we obtain the following corollary.
\begin{corollary}
Let $M$ be a compact embeddable strictly pseudoconvex
CR manifold. Then for $k\geq 1$, a pseudohermitian structure $\theta$ on $M$ is critical for the functional $\lambda_{k+1}/\lambda_{k}$ if and only if there exist finite families $\{\psi_1, \psi_2, \dots , \psi_d\} \subset E_{k}(\theta)$
and $\{\phi_1, \phi_2, \dots , \phi_e\} \subset E_{k+1}(\theta)$ of eigenfunctions such that
\begin{equation}\label{e:6.2b}
\sum_{j=1}^{d} L(\psi_j) 
= 
\sum_{l=1}^{e} L(\phi_l),
\end{equation}
where $L$ is given by \eqref{e:Ldef}. 
\end{corollary}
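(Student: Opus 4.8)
The plan is to establish this corollary by simply chaining Theorem~\ref{theo 61} and Proposition~\ref{pro 6.3}. Theorem~\ref{theo 61} says that $\theta$ is critical for $\lambda_{k+1}/\lambda_{k}$ exactly when the Hermitian form $\widetilde{Q}_{f}$ is indefinite on $E_{k}(\theta)\otimes E_{k+1}(\theta)$ for \emph{every} real-valued regular function $f$, whereas Proposition~\ref{pro 6.3} links the existence of the families in \eqref{e:6.2b} to indefiniteness of $\widetilde{Q}_{f}$ for $f\in\mathcal{A}_{0}(M,\theta)$ only. Hence the one point requiring attention is that these two ``for all $f$'' conditions coincide.

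First I would verify that the constant part of the test function drops out of $\widetilde{Q}_{f}$. Since $f\mapsto Q_{f}$, and therefore $f\mapsto\widetilde{Q}_{f}$, is $\mathbb{R}$-linear in its subscript, writing $f=f_{0}+g$ with $f_{0}$ the mean value of $f$ and $g\in\mathcal{A}_{0}(M,\theta)$ gives $\widetilde{Q}_{f}=\widetilde{Q}_{f_{0}}+\widetilde{Q}_{g}$, and I claim $\widetilde{Q}_{f_{0}}\equiv 0$. The ingredient is the identity $\int_{M}L(v,w)\,d\vol_{\theta}=\lambda_{j}\langle v,w\rangle_{L^{2}(\theta)}$ for $v,w$ in a single eigenspace $E_{j}(\theta)$; this comes from integrating \eqref{e:Ldef} and using $\int_{M}\langle\bar{\partial}_{b}v,\partial_{b}\bar{w}\rangle\,d\vol_{\theta}=\langle\Box_{b}v,w\rangle_{L^{2}(\theta)}$ (the $f\equiv 1$ case of the integration-by-parts identity from the proof of Theorem~\ref{thm:1.2}), after which the $(n+1)$- and $n$-terms combine to $\lambda_{j}\langle v,w\rangle$. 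Substituting $Q_{f_{0}}(v_{1},v_{2})=-f_{0}\lambda_{k}\langle v_{1},v_{2}\rangle$ for $v_{i}\in E_{k}(\theta)$ and $Q_{f_{0}}(w_{1},w_{2})=-f_{0}\lambda_{k+1}\langle w_{1},w_{2}\rangle$ for $w_{i}\in E_{k+1}(\theta)$ into the definition of $\widetilde{Q}_{f_{0}}$, and using $\langle\Box_{b}\,\cdot\,,\cdot\,\rangle=\lambda_{k}\langle\,\cdot\,,\cdot\,\rangle$ on $E_{k}(\theta)$ and $=\lambda_{k+1}\langle\,\cdot\,,\cdot\,\rangle$ on $E_{k+1}(\theta)$, the two summands cancel identically; so $\widetilde{Q}_{f_{0}}$ vanishes on decomposable tensors and hence on all of $E_{k}(\theta)\otimes E_{k+1}(\theta)$. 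Thus $\widetilde{Q}_{f}=\widetilde{Q}_{g}$ with $g\in\mathcal{A}_{0}(M,\theta)$, and the two conditions agree.

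Granting this, the proof closes at once: $\theta$ is critical for $\lambda_{k+1}/\lambda_{k}$ $\iff$ (Theorem~\ref{theo 61}) $\widetilde{Q}_{f}$ is indefinite on $E_{k}(\theta)\otimes E_{k+1}(\theta)$ for all real-valued regular $f$ $\iff$ $\widetilde{Q}_{f}$ is indefinite for all $f\in\mathcal{A}_{0}(M,\theta)$ $\iff$ (Proposition~\ref{pro 6.3}) there exist finite families $\{\psi_{1},\dots,\psi_{d}\}\subset E_{k}(\theta)$ and $\{\phi_{1},\dots,\phi_{e}\}\subset E_{k+1}(\theta)$ with $\sum_{j=1}^{d}L(\psi_{j})=\sum_{l=1}^{e}L(\phi_{l})$. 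I do not expect any serious obstacle here; the delicate point is the sign/pairing bookkeeping showing $\widetilde{Q}_{f_{0}}\equiv 0$, which hinges on the eigenspace identity $\int_{M}L(v,w)=\lambda_{j}\langle v,w\rangle$ and on the antisymmetric way the two tensor factors enter $\widetilde{Q}$. One could also bypass this step, since the converse half of the proof of Proposition~\ref{pro 6.3} in fact delivers indefiniteness of $\widetilde{Q}_{f}$ for \emph{all} smooth real $f$, which already closes the cycle of implications against Theorem~\ref{theo 61}.
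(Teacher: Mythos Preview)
Your proposal is correct and follows exactly the approach the paper takes: the paper simply states that the corollary follows by combining Theorem~\ref{theo 61} and Proposition~\ref{pro 6.3}, without further detail. You have been more careful than the paper in noting and resolving the mismatch between ``all real-valued regular $f$'' (Theorem~\ref{theo 61}) and ``all $f\in\mathcal{A}_0(M,\theta)$'' (Proposition~\ref{pro 6.3}); your computation that $\widetilde{Q}_{f_0}\equiv 0$ for constant $f_0$ is correct, and your closing remark---that the converse direction of Proposition~\ref{pro 6.3} already yields indefiniteness for all smooth $f$, so the cycle of implications closes without this computation---is also valid and is presumably what the authors had in mind.
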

\section{Examples}\label{sec:examples}
In this section, we give explicit examples of critical pseudohermitian structures for eigenvalue functionals 
of the Kohn Laplacian. If $M\subset \mathbb{C}^{n+1}$ is a compact strictly pseudoconvex real hypersurface defined by $\rho=0$ with $\rho$ is strictly plurisubharmonic and $\theta:= i\bar{\partial} \rho$ is a pseudohermitian structure on $M$, then the Kohn Laplacian is given by \cite{DLL}
\begin{equation}\label{e:klf}
\Box_b f
= 
\left(|\partial \rho|_{i\partial\bar{\partial}\rho}^{-2} \rho^{k}\rho^{\bar{j}} -\rho^{\bar{j} k}\right) f_{\bar{j} k } + n |\partial \rho|_{i\partial\bar{\partial}\rho}^{-2} \rho^{\bar{k}} f_{\bar{k}},
\end{equation}
where $f$ is a smooth function on $M$, extended smoothly to a neighborhood of $M$ in $\mathbb{C}^{n+1}$. We have used the notations $\rho_j = \partial \rho/\partial z^j$, $\rho_{j\bar{k}} = \partial^2\rho/\partial \bar{z}^{k} \partial z^j$, $\rho^{j\bar{k}}$ is the transpose of the inverse of $\rho_{j\bar{k}}$, $\rho^{\bar{j}} = \rho^{\bar{j}k}\rho_k$ (summation convention), and $|\partial \rho|_{i\partial\bar{\partial}\rho}^2= \rho^{\bar{j}k}\rho_k \rho_{\bar{j}}$. We shall also use the following observation to verify the condition \eqref{e:l1} in Theorem~\ref{thm:main2}.
\begin{lemma}\label{lem:71}
Suppose that $\psi_1,\dots , \psi_d$ are eigenfunctions with eigenvalue $\lambda_k$ which satisfy
\begin{equation}
\varphi: = \sum_{j=1}^d |\psi_j|^2 = \text{constant}.
\end{equation}
If either $\psi_j$'s are real-valued for all $j$, or the conjugations $\overline{\psi}_j$'s are CR for all $j$, then \eqref{e:l1} holds.
\end{lemma}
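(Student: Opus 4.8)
The plan is to derive everything from the pointwise Leibniz identity for $\Box_b$ applied to $\varphi=\sum_{j=1}^d|\psi_j|^2$, combined with the two structural hypotheses. First I would recall the identity already computed in the proof of Lemma~\ref{lem:4}: for any $\lambda_k$-eigenfunctions $\psi_1,\dots,\psi_d$,
\begin{equation}
\Box_b\varphi = \lambda_k\varphi + \sum_{j=1}^d \psi_j\,\Box_b\bar\psi_j - \sum_{j=1}^d|\partial_b\psi_j|^2 - \sum_{j=1}^d|\bar\partial_b\psi_j|^2 .
\end{equation}
Since $\varphi$ is assumed constant, the left-hand side vanishes, so this becomes a pointwise relation among the first-order quantities $|\partial_b\psi_j|^2$, $|\bar\partial_b\psi_j|^2$ and the cross terms $\psi_j\,\Box_b\bar\psi_j$, from which I intend to extract that $\sum_j|\bar\partial_b\psi_j|^2$ is itself constant.

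Next I would treat the two cases separately, the point in each being that the cross term $\sum_j\psi_j\,\Box_b\bar\psi_j$ collapses. If every $\psi_j$ is real-valued, then $\bar\psi_j=\psi_j$, hence $\Box_b\bar\psi_j=\lambda_k\psi_j$ and $|\partial_b\psi_j|=|\bar\partial_b\psi_j|$ (conjugation interchanges $\partial_b$ and $\bar\partial_b$ on real functions); substituting into the vanishing identity gives $2\sum_j|\bar\partial_b\psi_j|^2 = 2\lambda_k\varphi$. If instead every $\psi_j$ is anti-CR, then $\bar\psi_j$ is CR, so $\Box_b\bar\psi_j=0$ and $\partial_b\psi_j=0$; substituting gives $\sum_j|\bar\partial_b\psi_j|^2=\lambda_k\varphi$. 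In either case $\sum_j|\bar\partial_b\psi_j|^2=\lambda_k\varphi$, which is constant because $\varphi$ is. Consequently
\begin{equation}
\sum_{j=1}^d L(\psi_j) = (n+1)\lambda_k\varphi - n\sum_{j=1}^d|\bar\partial_b\psi_j|^2 = \big((n+1)-n\big)\lambda_k\varphi = \lambda_k\varphi ,
\end{equation}
which is exactly the constant asserted in \eqref{e:l1}.

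I do not expect a genuine obstacle here: the argument is a short pointwise computation, with no estimates involved. The only subtlety worth flagging is that $\bar\psi_j$ is in general \emph{not} an eigenfunction of $\Box_b$ — it is CR precisely when $\psi_j$ is anti-CR — so the term $\sum_j\psi_j\,\Box_b\bar\psi_j$ does not simplify on its own; it is exactly one of the two listed hypotheses that forces this term (together with $\sum_j|\partial_b\psi_j|^2$) to collapse, which is why both assumptions are needed and why the statement is phrased as a dichotomy.
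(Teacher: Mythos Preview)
Your proof is correct and follows exactly the same approach as the paper: apply the Leibniz identity for $\Box_b\varphi$ (already recorded in the proof of Lemma~\ref{lem:4}), use that $\varphi$ is constant so the left side vanishes, and then use the real-valued or anti-CR hypothesis to collapse the cross terms and conclude that $\sum_j|\bar\partial_b\psi_j|^2$ is constant. You have simply written out more detail than the paper does (in particular, you compute the explicit value $\sum_j|\bar\partial_b\psi_j|^2=\lambda_k\varphi$ rather than just ``constant''), but the argument is identical.
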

\begin{proof}
By direct calculations, we have that
\begin{equation}\label{e:a}
\Box_b \varphi 
=
\lambda_k \varphi + \sum_{j=1}^d \psi_j \Box_b \bar{\psi}_j - \sum_{j=1}^d |\partial_b \psi_j|^2 - \sum_{j=1}^d |\bar{\partial}_b \psi_j|^2.
\end{equation}
If $\varphi$ is a constant and $\psi_j$'s are real-valued, \eqref{e:a} implies that $\sum_{j=1}^d |\bar{\partial}_b \psi_j|^2$ is constant and hence \eqref{e:l1} follows. The argument for the case  the conjugations $\psi_j$'s are CR is similar and omitted.
\end{proof}
\begin{example}\rm 
On the sphere $\mathbb{S}^{2n+1}$ with the standard pseudohermitian structure
$\Theta: = i\partial \|Z\|^2$, the restrictions of the anti-holomorphic
functions $\bar{z}_k$ are the eigenfunctions for the first positive eigenvalue $\lambda_1=n$. Clearly, $\varphi:= \sum_{j=1}^{n+1} |\bar{z}_j|^2 =1 $ on the sphere. It follows from Lemma~\ref{lem:71} that condition \eqref{e:l1} in Theorem~\ref{thm:main2} holds. Thus, $\Theta$ is critical for $\lambda_1$-functional.
\end{example}
\begin{example}[cf. \cite{LS18}]\rm 
Consider the sphere $\mathbb{S}^3$ in $\mathbb{C}^2_{z,w}$ with the standard pseudohermitian structure. The functions $\psi_1:=\bar{z}^2, \psi_2:=\sqrt{2}\bar{z}\bar{w}$, and $\psi_3:=\bar{w}^2$ are eigenfunctions of $\lambda_2 = 2$ since $\Box_b \psi_j = 2\psi_j$ for $j=1,2,3$. On $\mathbb{S}^3$, $\sum_{j=1}^3 |\psi_j|^2 = 1$ is a constant. Therefore, by Lemma~\ref{lem:71} and Theorem~\ref{thm:main2}, the standard pseudohermitian structure on $\mathbb{S}^3$ is also critical for $\lambda_2 = 2$.

Alternatively, consider the restrictions of $\varphi_1 := \sqrt{2}\Re(z\bar{w})$, $\varphi_2 := \sqrt{2}\Im(z\bar{w})$, and $\varphi_3 := |z|^2 - |w|^2$. Then, for each $j=1,2,3$, $\varphi_j$ is a real-valued eigenfunction for $\lambda_2 = 2$. Moreover, by a direct calculation, $\sum_{j=1}^3 L(\varphi_j) = 2$ is constant.
\end{example}
\begin{example}\rm
This example generalizes the previous one.
Let $\mathcal{H}_{p,q}(\mathbb{S}^{2n+1})$ be the space of the restrictions to the sphere of the harmonic bihomogeneous polynomials of bidegree $(p,q)$. Then $\mathcal{H}_{p,q}(\mathbb{S}^{2n+1})$ is a subspace of the eigenspace $E_{\lambda}$ that corresponds to the eigenvalue $\lambda = q(p+n)$. On the other hand, the unitary group $U(n)$ acts transitively on $\mathbb{S}^{2n+1}$ and preserves the standard pseudohermitian structure. Thus, if $\{\psi_j\}$ is an orthonormal basis for this eigenspace, then 
\begin{equation}\label{e:sL}
\sum_{j}L(\psi_j) = \operatorname{constant}.
\end{equation}
When $p=0$, a similar identity holds if we take $\{\widetilde{\psi}_j\}$ to be the orthonormal basis for $\mathcal{H}_{0,q}(\mathbb{S}^{2n+1})$, since $U(n)$ also preserves the space of (anti) CR functions. Arguing similarly as in the proof of Corollary~\ref{cor:mul}, we deduce that $\sum_{j} |\widetilde{\psi}_j|^2$ is constant on the sphere. The last assertion is essentially Theorem~1 in \cite{rudin1984}.
\end{example}
\begin{example}[cf. \cite{son2018}]\label{ex:reinhardt}\rm
Let $S_r$ be the compact strictly pseudoconvex real hypersurface in $\mathbb{C}^{n+1}$ defined by $\rho = 0$, where
\begin{equation}
\rho
=
\sum_{j=1}^{n+1} \left( \log |z_j|^2\right)^2 - r^2.
\end{equation}
Then $S_r$ is the boundary of the smoothly bounded strictly pseudoconvex Reinhardt domain $\{ \rho < 0\}$. It is well-known that $S_r$ is locally homogeneous as a CR manifold, but not globally homogeneous. Thus, Corollary~\ref{cor:mul} does not apply for this case.

Let $\theta = i\partial \rho|_{S_r}$. Using \eqref{e:klf}, we can easily compute
\begin{equation}
\Box_b (\log |z_j|^2) = \frac{n}{2r^2} \log |z_j|^2.
\end{equation}
Therefore, the functions $v_j: = r^{-1}\log |z_j|^2$, $j= 1,2,\dots, n+1$, are real-valued eigenfunctions of $\Box_{b}$ that correspond to the eigenvalue $\lambda = n/(2r^2)$. Clearly, $v_1^2 + v_2^2 + \cdots + v_{n+1}^2 = 1$ on $S_r$. 
Thus, by Lemma~\ref{lem:4} and Theorem~\ref{thm:main2}, $\theta$ is critical for the eigenvalue $\lambda = n/(2r^2)$ on $S_r$.

Note in passing that since $v_j$'s are also eigenfunctions for the sub-Laplacian that correspond to the eigenvalue $\lambda(\Delta_b) = n/r^2$. By previous result \cite{ADS2}, $\theta$ is also critical for the eigenvalue of the sub-Laplacian and hence the map $(z_1, z_2, \dots, z_{n+1}) \mapsto ( \log |z_1|^2,\dots ,  \log |z_{n+1}|^2)$ is a pseudoharmonic submersion onto the sphere. 
\end{example}

\end{document}